\documentclass[10pt]{amsart}
\usepackage{amsmath}
\usepackage{amsfonts}
\usepackage{tikz}
\usepackage{physics} 
\usepackage{dsfont} 
\usepackage{xcolor} 
\usepackage{amssymb}
\usepackage{hyperref}
\usepackage{pgfplots}
\usepackage{csquotes}
\usepackage{mathtools}
\usepackage{comment}
\usepackage{array}
\usepackage{float}
\usepackage{physics}

\newcommand{\T}{\mathbb{T}}
\newcommand{\Z}{\mathbb{Z}}
\usepackage{graphicx}
\usepackage{subcaption}
\newtheorem{theorem}{Theorem}

\newtheorem{corollary}[theorem]{Corollary}

\newtheorem{definition}[theorem]{Definition}

\newtheorem{lemma}[theorem]{Lemma}

\newtheorem{proposition}[theorem]{Proposition}
\newtheorem{remark}[theorem]{Remark}

\newcommand\cH{{\mathcal H}}

\newcommand\cL{{\mathcal L}}

\newcommand\cP{{\mathcal P}}

\newcommand\bT{{\mathbb T}}

\title[Optimal response for SDEs in $\T^d$ with perturbations on the drift term]{Optimal response for stochastic differential equations in $\T^d$ with perturbations on the drift term}

\author{Gianmarco Del Sarto}
\address{Technische Universit\"{a}t Darmstadt\\
Fachbereich Mathematik\\
	Schlossgartenstr.\ 7\\
	64289 Darmstadt\\
	Germany}
\email{delsarto@mathematik.tu-darmstadt.de}

\author{Franco Flandoli}
\address{Scuola Normale Superiore\\
	Classe di Scienze\\
	P.za dei Cavalieri 7\\
	56126 Pisa\\
	Italy}
\email{franco.flandoli@sns.it}

\author{Stefano Galatolo}
\address{University of Pisa, Centro Interdipartimentale per lo Studio dei Sistemi Complessi\\
Department of Mathematics and Statistics\\
	Via Buonattoti 1\\
	Pisa 56217\\
	Italy}
\email{stefano.galatolo@unipi.it}

\author{Sakshi Jain}
\address{School of Mathematics and Physics\\
    The University of Queensland\\
	St Lucia, QLD 4072\\
	Australia}
\email{sakshi.jain@uq.edu.au}

\author{Angxiu Ni}
\address{Department of Mathematics\\
University of California, Irvine\\
USA
\and
Yau Mathematical Sciences Center\\
Tsinghua University\\
China}
\email{angxiun@uci.edu}

\begin{document}
\keywords{Stochastic differential equations, kernel operators, linear response, optimal control}
\subjclass{Primary: 37H30, 37N35, 60H10, 37C30, Secondary: 37H10, 47B34, 49N45, 60H30}
\begin{abstract}
    We study stochastic differential equations on the $d$-dimensional flat torus $\T^d$ with drift and perturbation coefficients in $L^{\infty}(\T^d;\mathbb{R}^d)$ and additive non-degenerate noise. For the associated transfer operators, we analyse the dependence of the stationary measure and of the expectation of a given observable on small perturbations of the drift. In this framework, we prove a linear response formula for the invariant density and for the expectation of a given observable.
    
    We then address an optimal response problem, namely the determination of admissible perturbations that maximise the first-order variation of a prescribed observable. We establish existence of optimal perturbations and, in a Hilbert space framework, prove uniqueness and provide an explicit characterisation of the optimiser. This yields a practical Fourier-based numerical method, which we implement in several numerical examples, including both low and high-dimensional settings.
\end{abstract}

\maketitle
\tableofcontents

\section{Introduction}

Understanding how the statistical properties of a dynamical system change under perturbations is a classical problem in dynamics, probability, and applications.  These statistical properties are naturally encoded by stationary measures and by the expectations of observables with respect to them; this point of view is standard in the study of diffusion processes and Markov semigroups, see \cite{Pavliotis2014,HM10,CKB22,AFG22}. A central question is whether these quantities vary differentiably under small changes of the system. When this happens, the system is said to exhibit \emph{linear response}.
For general background on linear response we refer for instance to \cite{Ruelle97,Ruelle09,Baladi14,Lucarini2008,CKB22,AFG22}, while for stochastic systems see also \cite{HM10,KLP,CKB22,GG_2019}.
 Beyond its intrinsic theoretical interest, linear response provides quantitative information on how a system reacts to external interventions and is therefore a natural starting point for problems of control, optimisation, or climate change \cite{AFG22,Schtte2001,Lucarini2016,Schtte_2023,RevModPhys.92.035002}.

A natural question arising from the existence of the linear response for a certain family of systems/perturbations is the  inverse problem of understanding how to find perturbation to apply to the system, resulting in a certain desired response, and how to do it  in some optimal way.
This is in some sense a control problem for the statistical properties of a dynamical system.

In this paper we study these kinds of questions for stochastic differential equations (SDEs) on the flat torus $\T^d$ with additive non-degenerate noise, under perturbations of the drift term. More precisely, we consider a family of SDEs obtained by adding a small perturbation $\delta \eta$ to the drift, with $\delta> 0$, and we investigate the effect of this perturbation on the invariant density and on the expectation of a given observable. 
A first goal of the paper is to establish a linear response formula in this setting and study its continuity properties up to changes in the perturbation direction. The main goal is then to address the associated \emph{optimal response problem}: among all admissible perturbation directions, determine those producing the largest first-order variation of a prescribed observable.

Our approach is based on the transfer operator associated with the time-one map of the SDE \cite{Schtte2001,Schtte_2023,AFG22}. The regularising effect of the noise allows us to work with compact integral operators on $L^2(\T^d)$, and this makes it possible to combine partial differential equation (PDE) estimates for the Fokker-Planck equation (FPE) with spectral arguments for the transfer operator \cite{Pavliotis2014,Bittracher2017,AFG22}. In this way we first obtain a linear response formula for the invariant density and, as a consequence, for the expectation of observables.
The linear response statement we prove is uniform in the perturbation, in the sense that we construct a certain bounded response operator, associating the infinitesimal perturbation with its response. 
This allows to investigate the optimization problem we want to address.
We then formulate the optimal response problem in a Hilbert-space framework, prove existence of optimal perturbations, and obtain uniqueness and an explicit characterisation of the optimiser under natural convexity assumptions. This explicit characterisation leads to a practical Fourier-based approximation method, which is implemented in several numerical examples, including higher-dimensional ones. A key analytic ingredient is a self-contained treatment of the FPE with Dirac delta initial condition. This gives the transition density for positive times and the estimates linking the FPE to the transfer-operator approach.

The control of the statistical properties of dynamical systems can be studied from different points of view. In \cite{Galatolo_2017} and \cite{Kloeckner2018LinearRequestProblem} the point of view was to find an infinitesimal perturbation that realizes a certain response uniformly for a large class of observables. This problem  was also refered to as the "Linear Request" problem. 

Toward the purposes of the present work, particularly relevant precursors are \cite{Antown2018} where the problem of finding an optimal response for a finite Markov Chain is considered and \cite{AFG22}, where the linear response and the optimal response were studied for systems whose transfer operators are given by kernels. 
We also mention the related  optimal response results obtained for deterministic expanding maps in \cite{FG23} and for hyperbolic systems in \cite{FroylandPhalempin2025OptimalLinearResponse} and \cite{GN25}.
In  \cite{Galatolo2022} the problem of optimizing an infinitesimal mean field coupling is considered.

The framework of \cite{AFG22}  includes important random systems and provides a natural operator-theoretic viewpoint on the optimisation problem, see also \cite{Carigi2025} for a related matrix-based computational approach. 
The present paper is closely related to that perspective, but differs from it in two main respects. First, we work directly with perturbations of the drift of an SDE, so that the perturbation has a direct dynamical meaning. Second, we are especially interested in multidimensional and high-dimensional computations, which motivates the Fourier-based Hilbert-space approach developed here.

It is also useful to compare the present paper with our previous work \cite{DGJ}. There, we studied optimal response for a time-discretisation of an SDE on $\mathbb{R}^d$ under local perturbations of the associated transfer kernel. While that setting is natural from the viewpoint of transfer operators, such perturbations are not, in general, induced by perturbations of the underlying drift. Here, instead, we perturb the drift directly on $\T^d$, so that the admissible directions have a direct dynamical, and hence more physical, interpretation. This naturally leads to a Hilbert-space framework. On the numerical side, the present paper also benefits from more recent efficient algorithms developed in \cite{Ni2025, Ni25}
for the linear response of diffusive systems, which allow a more effective treatment of multidimensional examples.

This paper is organised as follows. Section \ref{sec:main_results} states the main results. Section \ref{sec: SDEs} introduces the stochastic differential equation on $\T^d$ and the main properties of the associated probability densities. Section \ref{sec: transfer operator and spectral gap} studies the transfer operator and its spectral properties, while Section \ref{sec: perturbations and linear response} proves the linear response formula. Section \ref{sec: optimal response} is devoted to the optimal response problem, and Section \ref{sec: numerical examples} contains the numerical examples. Finally, Appendix \ref{appendix} develops the semigroup estimates for the FPE used in the paper, including a self-contained treatment of rough initial data (such as Dirac deltas), the corresponding density and positivity estimates for the transition kernel, and the first-order perturbation results needed.

\subsection*{Notation}
Throughout the paper, $\T^d:=\mathbb{R}^d/\mathbb{Z}^d$ denotes the $d$-dimensional flat torus, endowed with the normalised Lebesgue measure. For $1\leq p\leq \infty$, $L^p(\T^d)$ denotes the usual Lebesgue space of real-valued functions on $\T^d$, and $L^p(\T^d;\mathbb{R}^d)$ its vector-valued analogue. For $m\in\mathbb{N}$ and $1\leq p\leq \infty$, we write $W^{m,p}(\T^d)$ and $W^{m,p}(\T^d;\mathbb{R}^d)$ for the corresponding Sobolev spaces. In particular, $H^m(\T^d):=W^{m,2}(\T^d)$ and $H^m(\T^d;\mathbb{R}^d):=W^{m,2}(\T^d;\mathbb{R}^d)$, their norms are denoted by $\|\cdot\|_{p}$ and $\|\cdot\|_{{m,p}}$ respectively. We use the notation $X\hookrightarrow Y$ to denote a continuous embedding of $X$ into $Y$, and we denote by $V_{L^2}$ the zero average subspace of $L^2(\T^d)$, i.e. $V_{L^2}:= \{ f \in L^2(\T^d) \; : \; \int_{\T^d} f \, dx  = 0\}$. Given Banach spaces $X$ and $Y$, we denote by $\mathcal{L}(X,Y)$ the space of bounded linear operators from $X$ to $Y$, and we write $\mathcal{L}(X):=\mathcal{L}(X,X)$. Lastly, given a Banach space $X$, we denote by $C(X):= \{ f \colon X \to \mathbb{R} \, : f \text{ is continuous } \}$ the space of real-valued functions on $X.$ We refer to \cite{Tri78,Brezis2010} for more details on the function spaces.

We write $(\Omega,\mathcal{F},(\mathcal{F}_t)_{t\geq 0},\mathbb{P})$ for a filtered probability space, where $\Omega$ is the sample space, $\mathcal{F}$ is the $\sigma$-algebra, $(\mathcal{F}_t)_{t\geq 0}$ is a filtration, and $\mathbb{P}$ is the probability measure. The symbol $\mathbb{E}$ denotes expectation with respect to $\mathbb{P}$. We denote by $\mathcal{M}(\T^d)$ the space of finite signed Borel measures on $\T^d$, endowed with the total variation norm. We use $\mathcal{B}_b(\T^d)$ for the space of bounded Borel-measurable real-valued functions on $\T^d$. See \cite{Baldi2017,Stroock2006} for more information on stochastic processes.

\section{Main results}
\label{sec:main_results}
We consider the stochastic differential equation on the flat torus $\T^d$
\begin{equation*}
 dX_t^{\delta,x}=(b(X_t^{\delta,x})+\delta\eta(X_t^{\delta,x}))\,dt+dW_t,
 \qquad X_0^{\delta,x}=x,
\end{equation*}
where $b\in L^{\infty}(\T^d;\mathbb{R}^d)$ is a given drift, $\eta\in L^{\infty}(\T^d;\mathbb{R}^d)$ is a perturbation direction, and the noise is additive and non-degenerate. For each $\delta \geq 0$, let $p^\delta(1,x,y)$ be the transition density at time $1$, and define the associated transfer operator
\begin{equation*}
\cL_\delta\colon L^2(\T^d)\to L^2(\T^d),
\qquad (\cL_\delta f)(y):=\int_{\T^d} p^\delta(1,x,y)f(x)\,dx.
\end{equation*}
Thus $\cL_\delta f$ is the density at time $1$ of the perturbed dynamics started from an initial density $f$.

Let $f_0$ be the unique normalised invariant density of $\cL_0$, and let 
$$
V_{L^2}:=\{g\in L^2(\T^d) : \int_{\T^d} g\,dx=0 \}.
$$
A key point of our work is that the first-order effect of the perturbation on the one-step evolution of $f_0$ is described by a bounded linear operator on the space of perturbation fields. More precisely, for each $\eta\in L^{\infty}(\T^d;\mathbb{R}^d)$ we define
$$
D_{f_0,1}^{L^{\infty}}(\eta) := \lim_{\delta\to 0}
\frac{(\cL_\delta-\cL_0)f_0}{\delta} \qquad\text{in }L^2(\T^d),
$$
and prove that this limit exists, belongs to $V_{L^2}$, and depends linearly and continuously on $\eta$, see Proposition~\ref{cont}. Our first result shows that the linear response operator is continuous on $L^\infty(\T^d; \mathbb{R}^d).$
\begin{theorem}[Bounded linear response operator]
\label{thm: observables linear response}
For every observable $\phi\in L^2(\T^d)$ and every perturbation direction
$\eta\in L^{\infty}(\T^d;\mathbb{R}^d)$, the response
\begin{equation*}
R(\phi,\eta)
:=
\lim_{\delta\to 0}
\frac{\int_{\T^d}\phi f_\delta\,dx-\int_{\T^d}\phi f_0\,dx}{\delta}
\end{equation*}
is well defined and
\begin{equation*}
R(\phi,\eta)
=
\int_{\T^d}
\phi\,(Id-\cL_0)^{-1}D_{f_0,1}^{L^{\infty}}(\eta)\,dx.
\end{equation*}
Moreover, for every fixed $\phi\in L^2(\T^d)$, the map
\begin{equation*}
\eta\mapsto R(\phi,\eta)
\end{equation*}
is a continuous linear functional on $L^{\infty}(\T^d;\mathbb{R}^d)$.
\end{theorem}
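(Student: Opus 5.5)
The plan is to deduce the statement from the abstract linear response theorem for kernel operators with a spectral gap, as in \cite{AFG22}, and to obtain the continuity in $\eta$ from the continuity of $\eta\mapsto D_{f_0,1}^{L^{\infty}}(\eta)$ in Proposition~\ref{cont}. I would check four properties of the family $\cL_\delta$ on $L^2(\T^d)$.

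\textbf{(i) Uniform boundedness and compactness.} The operators $\cL_\delta$ are Hilbert--Schmidt, hence compact. Their norms are uniformly bounded for $\delta\in[0,\delta_0]$, because the kernels $p^\delta(1,\cdot,\cdot)$ are bounded in $L^2(\T^d\times\T^d)$ uniformly in $\delta$. This bound comes from the Fokker--Planck estimates of the Appendix, since they depend only on $\|b+\delta\eta\|_\infty$.

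\textbf{(ii) Spectral gap for $\cL_0$ on $V_{L^2}$.} The operator $\cL_0$ preserves integrals, so it maps $V_{L^2}$ to itself. The kernel $p^0(1,x,y)$ is strictly positive and bounded below. Together with compactness, this gives that $1$ is not an eigenvalue of $\cL_0|_{V_{L^2}}$, so $(Id-\cL_0)^{-1}$ exists as a bounded operator on $V_{L^2}$. I take this from Section~\ref{sec: transfer operator and spectral gap}.

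\textbf{(iii) Invariant densities converge.} Each $\cL_\delta$ has a unique invariant density $f_\delta$. I need $\|f_\delta-f_0\|_{2}\to0$, which I would get from $\|\cL_\delta-\cL_0\|_{L^2\to L^2}\to0$. That norm convergence should follow from the first-order perturbation estimates of the Appendix, namely $\|p^\delta-p^0\|_{L^2}=O(\delta)$.

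\textbf{(iv) Derivative operator.} The limit $D_{f_0,1}^{L^\infty}(\eta)$ exists in $V_{L^2}$; this is Proposition~\ref{cont}.

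With these in hand, the core computation is the standard identity. Since $f_\delta=\cL_\delta f_\delta$ and $f_0=\cL_0 f_0$,
\[
(Id-\cL_0)(f_\delta-f_0)=(\cL_\delta-\cL_0)f_0+(\cL_\delta-\cL_0)(f_\delta-f_0).
\]
Every term lies in $V_{L^2}$, so I can invert $Id-\cL_0$ there. Dividing by $\delta$ gives
\[
\frac{f_\delta-f_0}{\delta}=(Id-\cL_0)^{-1}\frac{(\cL_\delta-\cL_0)f_0}{\delta}+(Id-\cL_0)^{-1}\frac{(\cL_\delta-\cL_0)(f_\delta-f_0)}{\delta}.
\]
The first term converges to $(Id-\cL_0)^{-1}D_{f_0,1}^{L^\infty}(\eta)$ by (iv). The second term is bounded by
\[
C\,\delta^{-1}\|\cL_\delta-\cL_0\|\,\|f_\delta-f_0\|_2 .
\]
I control it as follows:
\begin{itemize}
\item The perturbation estimate gives $\delta^{-1}\|\cL_\delta-\cL_0\|\le C$.
\item Step (iii) gives $\|f_\delta-f_0\|_2\to0$.
\end{itemize}
Hence the second term tends to $0$. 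Pairing with $\phi\in L^2$ and using the Cauchy--Schwarz inequality yields existence of $R(\phi,\eta)$ and the stated formula.

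\textbf{Continuity in $\eta$.} Linearity of $R(\phi,\cdot)$ follows from linearity of $D_{f_0,1}^{L^\infty}$. For continuity,
\[
|R(\phi,\eta)|\le\|\phi\|_2\,\|(Id-\cL_0)^{-1}\|_{\mathcal L(V_{L^2})}\,\|D_{f_0,1}^{L^\infty}\|_{\mathcal L(L^\infty,L^2)}\,\|\eta\|_\infty .
\]
Boundedness of $D_{f_0,1}^{L^\infty}$ is again Proposition~\ref{cont}.

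\textbf{Main obstacle.} I expect the hardest step to be the operator-norm bound $\|\cL_\delta-\cL_0\|_{L^2\to L^2}\le C\delta$. This bound is not just the convergence of $\delta^{-1}(\cL_\delta-\cL_0)f_0$ at the single function $f_0$; it must hold uniformly over all $f\in L^2$. To prove it, I would write the difference of the Fokker--Planck solutions through Duhamel's formula:
\[
\rho^\delta_1-\rho^0_1=-\delta\int_0^1 P^\delta_{1-s}\,\nabla\!\cdot\!\big(\eta\,\rho^0_s\big)\,ds .
\]
Then I would use the smoothing estimate $\|P_t\nabla\cdot\|_{L^2\to L^2}\lesssim t^{-1/2}$, which is integrable on $[0,1]$, together with $\|\rho^0_s\|_2\le C\|f\|_2$. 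This gives the bound $C\delta\|\eta\|_\infty\|f\|_2$.
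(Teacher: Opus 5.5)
Your proposal is correct and follows essentially the paper's route. The shared steps are: the resolvent identity $(Id-\cL_0)(f_\delta-f_0)=(\cL_\delta-\cL_0)f_\delta$ split into the $f_0$-term plus a remainder, pairing with $\phi$, and continuity from Proposition~\ref{cont} together with boundedness of $(Id-\cL_0)^{-1}$ on $V_{L^2}$ (the paper gets the latter from a Neumann series and exponential contraction rather than Fredholm theory).

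The only real difference is how you obtain $\|\cL_\delta-\cL_0\|_{L^2\to L^2}\le C\delta$. Lemma~\ref{lem:L_delta_cty} derives it from the Hilbert--Schmidt bound $\|p^\delta-p^0\|_{L^2(\T^d\times\T^d)}\le C\delta$ given by Proposition~\ref{prop:new_prop}. Your $L^2$-data Duhamel argument also works, and is Step 2 of the appendix proof of Proposition~\ref{prop: convergence density pdelta regular p0}, but the paper only proves the $t^{-1/2}$ divergence-smoothing bound for the heat semigroup. So either write Duhamel with $S(t)$ and absorb $-\mathrm{div}(b\,\cdot)$ by fractional Gr\"onwall, or first prove that bound for the perturbed Fokker--Planck semigroup.
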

\noindent
The linear response formula itself is classical in noisy settings. But the point that is important for the optimisation problem considered in our work is that the response is obtained through the bounded operator
\begin{equation*}
D_{f_0,1}^{L^{\infty}}\colon L^{\infty}(\T^d;\mathbb{R}^d)\to V_{L^2},
\end{equation*}
which allows us to optimise over general classes of perturbation fields.

To formulate the optimisation problem, let $\cH$ be a separable Hilbert space such that
$$
\cH \hookrightarrow L^{\infty}(\T^d;\mathbb{R}^d).
$$
By restriction of $D_{f_0,1}^{L^{\infty}}$, we obtain a bounded linear map $
D_{f_0,1}^{\cH}\colon \cH\to V_{L^2}.$ For $\phi\in L^2(\T^d)$ and $\eta\in\cH$, we then define
\begin{equation}
R_{\cH}(\phi,\eta):=
\int_{\T^d}
\phi\,(Id-\cL_0)^{-1}D_{f_0,1}^{\cH}(\eta)\,dx.
\label{eq: definition of R_cH}
\end{equation}
Since $\cH\hookrightarrow L^{\infty}(\T^d;\mathbb{R}^d)$, this is exactly the response functional restricted to perturbations in $\cH$.

We can now state the optimal response problem: given an observable $\phi$ and an admissible class $P\subset \cH$, find the perturbation fields in $P$ that maximise $R_{\cH}(\phi,\cdot)$. Our next result gives existence and uniqueness on strictly convex admissible classes.

\begin{theorem}[Existence and uniqueness of an optimal perturbation]
\label{thm:gen-cnvx}
Let $\cH$ be a separable Hilbert space such that
\begin{equation*}
\cH \hookrightarrow L^\infty(\T^d;\mathbb{R}^d).
\end{equation*}
Let $P\subset \cH$ be a closed, bounded, and strictly convex set whose relative interior contains the origin, and let $\phi\in L^2(\T^d)$. If the functional
\begin{equation*}
R_{\cH}(\phi,\cdot)\colon \cH\to\mathbb{R}
\end{equation*}
is not identically zero on $P$, then there exists a unique
$\eta_{\mathrm{opt}}\in P$ such that
\begin{equation*}
R_{\cH}(\phi,\eta_{\mathrm{opt}})
=
\max_{\eta\in P}R_{\cH}(\phi,\eta).
\end{equation*}
\end{theorem}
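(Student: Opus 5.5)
The plan is to reduce the problem to maximising a nonzero continuous linear functional on a closed, bounded, strictly convex subset of a Hilbert space. Existence will come from weak compactness, and uniqueness from strict convexity together with the fact that the maximum is attained on the boundary.

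\emph{Step 1 (continuity and linearity).} Let $\iota\colon\cH\hookrightarrow L^\infty(\T^d;\mathbb{R}^d)$ denote the continuous embedding. By Theorem~\ref{thm: observables linear response}, the map $\eta\mapsto R(\phi,\eta)$ is a continuous linear functional on $L^\infty(\T^d;\mathbb{R}^d)$. Hence $R_{\cH}(\phi,\cdot)=R(\phi,\iota(\cdot))$ is a continuous linear functional on $\cH$. This can also be seen directly: $D^{\cH}_{f_0,1}$ is bounded, $(Id-\cL_0)^{-1}$ is bounded on $V_{L^2}$ by the spectral gap, and pairing with $\phi\in L^2$ is continuous. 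By the Riesz representation theorem there is then $h\in\cH$ with
\[
R_{\cH}(\phi,\eta)=\langle h,\eta\rangle_{\cH}\quad\text{for all }\eta\in\cH,
\]
and $h\neq 0$ because the functional is not identically zero on $P$.

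\emph{Step 2 (existence).} The set $P$ is convex and closed, hence weakly closed (Mazur). It is also bounded, so it is weakly compact in the Hilbert space $\cH$. The map $\eta\mapsto\langle h,\eta\rangle$ is weakly continuous, so it attains its maximum on $P$.

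\emph{Step 3 (uniqueness).} The first task is to show that the maximum value $M$ is strictly positive. Since $0$ lies in the relative interior of $P$, for every $\eta\in P$ the point $-\epsilon\eta$ lies in $P$ for some small $\epsilon>0$. There is some $\eta\in P$ with $\langle h,\eta\rangle\neq0$, so either $\eta$ or $-\epsilon\eta$ gives a positive value, and therefore $M>0$.

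Now suppose $\eta_1\neq\eta_2$ are both maximisers. Their midpoint $m=\tfrac12(\eta_1+\eta_2)$ also attains $M$ by linearity. By strict convexity, $m$ lies in the relative interior of $P$, i.e.\ $m+\rho v\in P$ for all small $\rho>0$ and all unit $v$ in the closed linear span $E$ of $P-P$; this span contains $P$ because $0\in P$.

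\begin{itemize}
\item If $h$ has a nonzero component $h_E$ in $E$, the point $m+\rho h_E/\|h_E\|$ lies in $P$ and gives the value $M+\rho\|h_E\|>M$. This is a contradiction.
\item If $h\perp E$, then $R_{\cH}(\phi,\cdot)$ vanishes on $P\subset E$. This contradicts the hypothesis.
\end{itemize}

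Hence the maximiser is unique.

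The main point needing care is the interpretation of ``strictly convex'' and of the relative interior. I take strict convexity to mean that open segments between distinct points of $P$ lie in the relative interior, relative to the affine hull, whose linear span is $E$ since $0\in P$. I also need the orthogonal decomposition of $h$ onto the closed subspace $E$. Everything else follows directly from Theorem~\ref{thm: observables linear response} and standard Hilbert space facts.
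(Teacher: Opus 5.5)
Your proof is correct and follows the same route as the paper: establish continuity of $R_{\cH}(\phi,\cdot)$, then use weak compactness for existence and the relative-interior argument under strict convexity for uniqueness. The paper simply cites these abstract facts from \cite{AFG22} as Propositions~\ref{prop:exist} and~\ref{prop:uniqe}, whereas you prove them inline; your preliminary step showing $M>0$ is never used in the final contradiction and can be dropped.
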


An especially important case is when the admissible set is the closed unit ball of $\cH$,
\begin{equation}
\label{e:define optr}
P:=\{\eta\in\cH:\|\eta\|_{\cH}\le 1\}.
\end{equation}
In this case the optimiser can be written explicitly.

\begin{theorem}[Optimal perturbation on the unit ball of $\cH$]
\label{gen:algo}
Let $\cH$ be a separable Hilbert space such that
\begin{equation*}
\cH \hookrightarrow L^\infty(\T^d;\mathbb{R}^d),
\end{equation*}
and let $\{b_i\}_{i\in\mathbb{N}}$ be an orthonormal basis of $\cH$. Assume that $
R_{\cH}(\phi,\cdot)\colon \cH\to\mathbb{R}$ is not identically zero. Then the optimisation problem
\begin{equation*}
R_{\cH}(\phi,\eta_{\mathrm{opt}})
=
\max_{\eta\in P}R_{\cH}(\phi,\eta),
\qquad P=\{\eta\in\cH:\|\eta\|_{\cH}\le 1\},
\end{equation*}
has the unique solution
\begin{equation*}
\eta_{\mathrm{opt}}=\frac{v}{\|v\|_{\cH}},
\end{equation*}
where $v\in\cH$ is the Riesz representative of $R_{\cH}(\phi,\cdot)$, characterised by
\begin{equation*}
\langle v,b_i\rangle_{\cH}=R_{\cH}(\phi,b_i),
\qquad i\in\mathbb{N}.
\end{equation*}
\end{theorem}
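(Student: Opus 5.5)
The argument will be Hilbert-space duality followed by the Cauchy--Schwarz inequality, with its equality case giving uniqueness.

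\emph{Step 1: continuity of the functional on $\cH$.} We first check that $\ell(\eta):=R_{\cH}(\phi,\eta)$ is a bounded linear functional on $\cH$. Let $\iota\colon\cH\hookrightarrow L^\infty(\T^d;\mathbb{R}^d)$ be the continuous embedding. Then $D^{\cH}_{f_0,1}=D^{L^\infty}_{f_0,1}\circ\iota$, and $D^{L^\infty}_{f_0,1}$ is bounded into $V_{L^2}$ by Proposition~\ref{cont}. The operator $(Id-\cL_0)^{-1}$ is bounded on $V_{L^2}$ by the spectral gap of Section~\ref{sec: transfer operator and spectral gap}; this is the same fact already used in Theorem~\ref{thm: observables linear response}. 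Pairing with $\phi\in L^2$ is bounded. Composing, we get
\[
|\ell(\eta)|\le \|\phi\|_2\,\|(Id-\cL_0)^{-1}\|_{\mathcal{L}(V_{L^2})}\,\|D^{L^\infty}_{f_0,1}\|\,\|\iota\|\,\|\eta\|_{\cH},
\]
so $\ell\in\cH^*$.

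\emph{Step 2: the Riesz representative.} By the Riesz representation theorem there is a unique $v\in\cH$ with $\ell(\eta)=\langle v,\eta\rangle_{\cH}$ for all $\eta\in\cH$. Since $\{b_i\}$ is an orthonormal basis, $v=\sum_i\langle v,b_i\rangle_{\cH} b_i=\sum_i \ell(b_i)\,b_i$, with the series converging in $\cH$. This gives the stated characterisation $\langle v,b_i\rangle_{\cH}=R_{\cH}(\phi,b_i)$, and $v$ is determined by these coefficients. Because $\ell\not\equiv0$, we have $v\neq0$.

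\emph{Step 3: maximisation.} For $\|\eta\|_{\cH}\le1$, Cauchy--Schwarz gives $\ell(\eta)=\langle v,\eta\rangle_{\cH}\le\|v\|_{\cH}$. Equality holds at $\eta=v/\|v\|_{\cH}$, so this point is a maximiser.

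\emph{Step 4: uniqueness.} Suppose $\eta\in P$ attains the maximum. Then
\[
\|v\|_{\cH}=\langle v,\eta\rangle_{\cH}\le\|v\|_{\cH}\|\eta\|_{\cH}\le\|v\|_{\cH}.
\]
Hence $\|\eta\|_{\cH}=1$, and we are in the equality case of Cauchy--Schwarz, so $\eta=\lambda v$ with $\lambda>0$. The normalisation then forces $\eta=v/\|v\|_{\cH}$. Alternatively, uniqueness follows from Theorem~\ref{thm:gen-cnvx}, since the unit ball of a Hilbert space is closed, bounded and strictly convex, and contains $0$ in its interior.

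The only step that needs real input is Step 1, the boundedness of $(Id-\cL_0)^{-1}$ on $V_{L^2}$ together with that of $D^{L^\infty}_{f_0,1}$. Both are supplied by earlier results, so the remaining steps are routine.
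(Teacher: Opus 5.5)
Your proposal is correct and follows essentially the same route as the paper: continuity of $R_{\cH}(\phi,\cdot)$ (the content of Lemma~\ref{lemma: response continuous on cV}), the Riesz representation theorem, and Cauchy--Schwarz with equality attained at $v/\|v\|_{\cH}$. The only minor difference is uniqueness: you obtain it directly from the equality case of Cauchy--Schwarz, which makes the argument self-contained, whereas the paper uses Theorem~\ref{thm:gen-cnvx} via strict convexity of the unit ball, a route you also mention.
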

Theorem \ref{gen:algo} provides a practical route to computation: once the response is evaluated on a convenient orthonormal basis of $\cH$, the optimal perturbation can be approximated from its coefficients. In the Sobolev setting one may take
\begin{equation*}
\cH=H^p(\T^d;\mathbb{R}^d),
\qquad
p>\frac d2,
\end{equation*}
so that the Sobolev embedding theorem yields
\begin{equation*}
H^p(\T^d;\mathbb{R}^d)\hookrightarrow L^{\infty}(\T^d;\mathbb{R}^d).
\end{equation*}
More generally, the same approach applies to Hilbert subspaces continuously embedded in $L^{\infty}(\T^d;\mathbb{R}^d)$, which is useful in some of the high-dimensional examples.

The final part of the paper is devoted to numerical experiments. There we implement the above strategy in several examples, including a two-dimensional Kuramoto system, a high-dimensional solenoid-type model, and a three-dimensional Lorenz system. These computations show that the optimal response problem can be solved effectively in practice and that the Hilbert-space characterisation above leads to a concrete and flexible numerical method.

\section{Stochastic differential equations on \texorpdfstring{$\T^d$}{T d}}
\label{sec: SDEs}

\subsection{Preliminaries}
We work on the $d$-dimensional flat torus $\T^d:=\mathbb{R}^d/\Z^d$, endowed with the normalised Lebesgue measure. Let $b, \eta \colon \T^d \to \mathbb{R}^d$ and assume $b, \eta \in L^{\infty}(\mathbb{T}^d; \mathbb{R}^d).$ For any $\delta \in [0,1)$, we consider the SDE on $\T^d$ given by
\begin{equation}
\left \lbrace 
    \begin{aligned}
        dX_t^{\delta,x} &= (b(X_t^{\delta,x}) + \delta \eta (X_t^{\delta,x}) ) dt + dW_t, \qquad t \in (0,T),\\
        X_0^{\delta,x} &= x.
    \end{aligned}
    \right.
    \label{eq: SDE in Td}
\end{equation}
Here $x \in \mathbb{T}^d$ denotes the initial condition and $(W_t)_t$ is a Brownian motion defined on the filtered probability space $(\Omega, \mathcal{F}, (\mathcal{F}_t)_t, \mathbb{P})$. 
\begin{remark}\label{rem:stationary_measure}
We recall that the assumptions $b,\eta \in L^\infty(\T^d; \mathbb{R}^d)$ and the non-degenerate diffusion coefficient are sufficient for the strong existence and uniqueness for the solution of \eqref{eq: SDE in Td}. This can be deduced, for instance, by the Zvonkin-Veretennikov Theorem, see \cite[Theorem 1]{Zvonkin}. Regarding the existence of a probability density $p^{\delta}(t,x, \cdot )$ associated to $X_t^{\delta,x}$, for $t>0$, it is again a consequence of the boundedness of the drift and the non degenerate noise, see \cite[Section 9.2]{Stroock2006}. Moreover, since $\T^d$ is compact, the Krylov-Bogolyubov Theorem guarantees the existence of at least one invariant probability measure for \eqref{eq: SDE in Td}, see \cite[Section 7.1]{DaPrato2006}. On the other hand, the non-degenerate additive noise implies that the associated Markov semigroup is strong Feller and irreducible; thus, this invariant probability measure is unique, see \cite[Section 7.2]{DaPrato2006}.
\end{remark}
We recall, for the sake of clarity, that the solution of the SDE \eqref{eq: SDE in Td} can be constructed by lifting the problem on $\mathbb{R}^d$ as follows. Let $\pi := \mathbb{R}^d \to \mathbb{R}^d / \mathbb{Z}^d$ be the canonical $1$-Lipschitz projection. Define the $\mathbb{Z}^d$-periodic lifts $\tilde b, \tilde \eta \colon \mathbb{R}^d \to  \mathbb{R}^d$ by
$$
\tilde b(x) := b (\pi (x)), \qquad \tilde \eta (x) := \eta (\pi (x)), \qquad x \in \mathbb{R}^d,
$$
and note that $\tilde b, \tilde  \eta \in L^{\infty}(\mathbb{R}^d; \mathbb{R}^d)$ since $b, \eta \in L^{\infty} (\mathbb{T}^d; \mathbb{R}^d)$ by assumption and the projection $\pi $ is $1$-Lipschitz. For any representative $\tilde x \in \mathbb{R}^d$ of $x = \pi (\tilde x) \in \mathbb{T}^d$, consider the lifted SDE on $\mathbb{R}^d$
\begin{equation}
\left \lbrace 
    \begin{aligned}
        d \tilde X_t^{\delta, \tilde x} &= (\tilde b(\tilde X_t^{\delta, \tilde x}) + \delta \tilde  \eta (\tilde X_t^{\delta, \tilde x}) ) dt + dW_t, \qquad t \in (0,T),\\
        \tilde X_0^{\delta, \tilde x} &= \tilde x.
    \end{aligned}
    \right.
    \label{eq: SDE on Rd}
\end{equation}
Since the drift of the previous equation, i.e. $\tilde b + \delta \tilde \eta $, is in $L^\infty(\mathbb{R}^d; \mathbb{R}^d)$, then the strong existence and uniqueness for the SDE \eqref{eq: SDE on Rd} follows from the Zvonkin-Veretennikov Theorem, as recalled in Remark \ref{rem:stationary_measure}.

Moreover, since the noise of the SDE is non-degenerate, the law of $\tilde X_t^{\delta, \tilde x}$ is absolutely continuous with respect to Lebesgue measure on $\mathbb{R}^d$ for $t>0$. We denote by $\tilde p^\delta(t,x,\cdot)$ its density, i.e.
\[
\mathbb{P}(\tilde X^{\delta,\tilde x}_t\in A)=\int_A \tilde p^\delta(t,x,y)\,dy,\qquad A\subset\mathbb{R}^d\ \text{Borel}.
\]
We refer to \cite[Section 9.2]{Stroock2006} for more details on densities associated to SDEs. Hence, the solution of \eqref{eq: SDE in Td} is defined as
$$
X_t^{\delta,x}:=\pi(\tilde X_t^{\delta,\tilde x}).
$$
This definition does not depend on the choice of the representative $\tilde  x$ since $\tilde b, \tilde \eta$ are $\mathbb{Z}^d$-periodic. Moreover, for every $t>0$ also the law of $X_t^{\delta,x}$ is absolutely continuous with respect to the
(normalised) Lebesgue measure on $\T^d$; we denote its density by $p^\delta(t,x,\cdot)$. It can be expressed in terms of the density $\tilde p^\delta$ of the lifted process by the periodisation formula
\begin{equation}
    p^\delta(t,x,y)=\sum_{k\in\Z^d}\tilde p^\delta\big(t,\tilde x,\tilde y+k\big),
    \label{eq: density on Td}
\end{equation}
where $\tilde x,\tilde y\in\mathbb{R}^d$ are any representatives of $x,y\in\T^d$. Informally, \eqref{eq: density on Td} means that reaching $y$ on the torus corresponds to reaching any translate $\tilde y + k \in \mathbb{R}^d$.\footnote{To justify \eqref{eq: density on Td}, identify $\T^d$ with the representative domain $[0,1)^d$ and let $A\subset [0,1)^d$ be a Borel set. Then
$$
\mathbb{P}(X_t^{\delta,x}\in A)=\mathbb{P}(\pi(\tilde X_t^{\delta,\tilde x})\in A)
=\sum_{k\in\Z^d}\mathbb{P}(\tilde X_t^{\delta,\tilde x}\in A+k)
=\sum_{k\in\Z^d}\int_{A+k}\tilde p^\delta(t,\tilde x,z)\,dz.
$$
Changing variables $z=y+k$ in each term gives
$$
\mathbb{P}(X_t^{\delta,x}\in A)=\int_A\left(\sum_{k\in\Z^d}\tilde p^\delta(t,\tilde x,y+k)\right)\,dy,
$$
which proves that the expression in \eqref{eq: density on Td} is the density of $X_t^{\delta,x}$. The series is well-defined at least in $L^1([0,1)^d)$, and hence finite a.e., since it is non-negative and
$$
\int_{[0,1)^d} p^\delta(t,x,y)\,dy
=\sum_{k\in\Z^d}\int_{[0,1)^d}\tilde p^\delta(t,\tilde x,y+k)\,dy
=\sum_{k\in\Z^d}\int_{[0,1)^d+k}\tilde p^\delta(t,\tilde x,z)\,dz
=\int_{\mathbb{R}^d}\tilde p^\delta(t,\tilde x,z)\,dz
=1.
$$}

Furthermore, the density $p^\delta $ satisfies the Fokker-Planck equation (FPE)
\begin{equation}
\left \lbrace
    \begin{aligned}
        \partial_t  p^\delta &= \frac{1}{2}\Delta_y  p^\delta - \mathrm{div}_y(  p^\delta (b + \delta \eta )), \qquad  &\text{ in } (0,T) \times \T^d, \\
         p^\delta|_{t = 0} &=  \delta_x, \qquad & \text{ in } \T^d,
    \end{aligned}
    \right.
    \label{eq: FPE in Td}
\end{equation}
More precisely, a family of probability measures
$(\mu_t^\delta)_{t\in[0,T]}$ is a solution of \eqref{eq: FPE in Td} if
for every \(\varphi\in C_c^\infty([0,T)\times\T^d)\),
$$
\int_0^T \int_{\T^d}
\left( \partial_t\varphi(t,y) +\frac{1}{2}\Delta_y\varphi(t,y) +(b(y)+\delta\eta(y))\cdot\nabla_y\varphi(t,y)
\right) \,d\mu_t^\delta(y)\,dt +\varphi(0,x)=0.
$$
For $t>0$, $\mu_t^\delta$ is absolutely continuous with respect to Lebesgue measure and we denote its density by $p^\delta(t,x,\cdot)$; in this case, the density $p^\delta(t) $, for $t >0$, coincides with the mild/variational solution introduced in Appendix \ref{appendix}. For more details on the measure-valued solutions for FPEs and associated densities we refer to \cite[Chapter 6]{Bogachev2022}.

The next proposition collects the basic properties of the transition kernel associated with the perturbed SDE that will be used repeatedly in the following sections. In particular, it provides existence of the density, conservation of mass, quantitative smoothing bounds, and a uniform positivity estimate, which are the ingredients needed later for the transfer-operator approach.

\begin{proposition}[Kernel estimates]\label{prop:kernel}
Assume $b,\eta\in L^{\infty}(\T^d;\mathbb{R}^d)$. Let $x\in\T^d$. There exists a unique solution
$(\mu_t^{\delta})_{t\in[0,T]}$ of \eqref{eq: FPE in Td}. For every $t>0$, $\mu_t^{\delta}$ admits a density $p^\delta(t,x,\cdot)$,
and this density satisfies
\begin{equation}
\| p^\delta (t,x, \cdot ) \|_1 = 1, \qquad t \in (0, T].
\label{eq: mass conservation}
\end{equation}
Furthermore, there exists a constant  $C = C( \| b \|_{{\infty}},  \| \eta  \|_{{\infty}}, d, T)>0$ independent of $\delta$ such that 
\begin{equation}
\| p^\delta (t,x, \cdot ) \|_2 \leq C t^{-d/4}, \qquad \delta \in [0,1).
\label{eq: kernel estimate H1}
\end{equation}
Moreover
\begin{equation}
\| p^\delta (t,x,\cdot )\|_\infty \leq  C t^{-d/2}, \quad 0 < t < T, \quad \delta \in [0,1),
\label{eq: kernel estimarte Linfty}
\end{equation}
and there exists $c = c( t,\| b\|_\infty, \| \eta \|_\infty ,d)>0$ such that
\begin{equation}
\inf_{(x,y ) \in \T^d \times \T^d} p^\delta (t, x,y) \geq c, \qquad t \in (0,T], \quad \delta \in [0,1). 
    \label{eq: bound below positive kernel}
\end{equation}
\end{proposition}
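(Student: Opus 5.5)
Write $\beta:=b+\delta\eta$. Then $\|\beta\|_\infty\le \|b\|_\infty+\|\eta\|_\infty=:M$ uniformly in $\delta\in[0,1)$, so every constant below depends only on $M$, $d$, $T$. The plan is to treat the Fokker--Planck equation \eqref{eq: FPE in Td} as a perturbation of the heat equation on $\T^d$ and work with its mild (Duhamel) formulation. Let $g(t,\cdot)$ denote the periodised heat kernel of $\tfrac12\Delta$ on $\T^d$, and $G_t$ the corresponding semigroup. A density solution should satisfy
\begin{equation*}
p(t)=G_t\delta_x-\int_0^t \mathrm{div}\, G_{t-s}\big(\beta\, p(s)\big)\,ds .
\end{equation*}
We use the standard torus heat-kernel bounds
\[
\|G_t\delta_x\|_q\lesssim t^{-\frac d2(1-\frac1q)}, \qquad \|\nabla G_t\|_{L^r\to L^q}\lesssim t^{-\frac12-\frac d2(\frac1r-\frac1q)}, \qquad t\in(0,T].
\]
We solve the Duhamel equation by a fixed point in the weighted space $\{p:\sup_{0<t\le T} t^{\frac d2(1-\frac1q)}\|p(t)\|_q<\infty\}$, with $q$ slightly above $1$. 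The gradient term costs only $(t-s)^{-1/2}$ times an integrable power, so the integral is of Beta-function type. On a short time interval, or with an exponential time weight, the map is a contraction, which gives existence.

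For uniqueness among measure-valued solutions in the weak sense stated, we argue by duality. Given any weak solution $\mu_t$, test it against solutions of the backward Kolmogorov equation $\partial_t\varphi+\tfrac12\Delta\varphi+\beta\cdot\nabla\varphi=0$ with smooth terminal data. For bounded $\beta$ this backward problem is well posed in $W^{1,\infty}$-type spaces, again by Duhamel with $\|\nabla G_t\|_{L^\infty\to L^\infty}\lesssim t^{-1/2}$. Alternatively, uniqueness follows from \cite[Chapter 6]{Bogachev2022} together with the strong uniqueness of the SDE in Remark~\ref{rem:stationary_measure}. Either way the solution is the law of $X_t^{\delta,x}$, and its density is given by \eqref{eq: density on Td}.

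Mass conservation \eqref{eq: mass conservation} then follows from two facts. Positivity holds because $p$ is the density of a probability law, and testing with $\varphi\equiv 1$ (the divergence term integrates to zero on the torus) gives total mass $1$.

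For \eqref{eq: kernel estimate H1} and \eqref{eq: kernel estimarte Linfty} we bootstrap in the Duhamel formula. Suppose we already know $\|p(s)\|_r\le Cs^{-\frac d2(1-\frac1r)}$. Then
\[
\Big\|\int_0^t \nabla G_{t-s}(\beta p(s))\,ds\Big\|_q\lesssim M\int_0^t (t-s)^{-\frac12-\frac d2(\frac1r-\frac1q)}s^{-\frac d2(1-\frac1r)}\,ds .
\]
Whenever $\frac d2(\frac1r-\frac1q)<\frac12$ this equals $C\,t^{\frac12-\frac d2(1-\frac1q)}\le C_T\, t^{-\frac d2(1-\frac1q)}$. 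We therefore climb from $r=1$ to $q=2$ and then to $q=\infty$ in finitely many steps, each raising the integrability by less than $1/d$, and each step gives a constant depending only on $(M,d,T)$. At $q=\infty$ the bound is $t^{-d/2}$, and $q=2$ gives $t^{-d/4}$.

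The lower bound \eqref{eq: bound below positive kernel} is the main obstacle. First take Gaussian lower bounds for the lifted kernel $\tilde p^\delta$ on $\mathbb{R}^d$ with bounded drift (Aronson-type, or via Girsanov). Concretely, the Girsanov density $\exp(\int\beta\,dW-\tfrac12\int|\beta|^2)$ has moments controlled by $M$. By Hölder's inequality, $\mathbb{P}(X_t\in A)\ge c\,\mathbb{P}(x+W_t\in A)^2$ with $c=c(M,t)$. Since the Brownian density on the torus is bounded below by $c_t>0$, this gives a lower bound on probabilities of sets, uniform in $x$ and $\delta$. To upgrade it to a pointwise density bound, use Chapman--Kolmogorov:
\[
p(t,x,y)=\int p(t/2,x,z)\,p(t/2,z,y)\,dz .
\]
Here the first factor is a density, and the second is controlled through continuity in $z$ of $p(t/2,\cdot,y)$ (Hölder regularity of the kernel via the Duhamel bounds). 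Alternatively, take the pointwise Gaussian lower bound on $\mathbb{R}^d$ from the literature and periodise via \eqref{eq: density on Td}, keeping only the term $k$ nearest to $\tilde y-\tilde x$. I expect this periodisation route to be the cleanest.
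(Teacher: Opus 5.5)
\textbf{The gap: your bootstrap for the $L^2$ and $L^\infty$ bounds.} The gap is in your derivation of \eqref{eq: kernel estimate H1} and \eqref{eq: kernel estimarte Linfty}. The integral $\int_0^t (t-s)^{-\frac12-\frac d2(\frac1r-\frac1q)}\,s^{-\frac d2(1-\frac1r)}\,ds$ is a Beta integral, and it is finite only if \emph{both} exponents are below $1$. You imposed the condition at $s=t$, but not the condition $\frac d2(1-\frac1r)<1$ at $s=0$.

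That second condition fails exactly where your ladder has to go:
\begin{itemize}
\item The last step to $q=\infty$ needs $r>d$. Hence $\frac d2(1-\frac1r)>\frac{d-1}{2}\ge 1$ for every $d\ge 3$.
\item The step into $q=2$ needs $\frac1r<\frac12+\frac1d$. Hence $\frac d2(1-\frac1r)>\frac d4-\frac12\ge 1$ for $d\ge 6$.
\end{itemize}
In general, usable inputs require $\frac1r>1-\frac2d$, so the scheme cannot get past $\frac1q=1-\frac3d$. It proves neither bound in general dimension.

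The fix is to split the Duhamel integral at $t/2$, as the paper does in Proposition~\ref{prop:Linfty_bound_density}. On $[0,t/2]$, use only $\|p(s)\|_1\le 1$ and the $L^1\to L^q$ bound for $\nabla S(t-s)$; this is harmless because $t-s\ge t/2$. On $[t/2,t]$ one has $s\simeq t$, so only the $(t-s)$-singularity matters.

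The paper in fact avoids the ladder altogether:
\begin{itemize}
\item It goes directly from $L^1$ to $L^\infty$, applying the $L^\infty\to L^\infty$ gradient bound $(t-s)^{-1/2}$ to the unknown on $[t/2,t]$.
\item It absorbs this term through $M_n(t)=\sup_{0<r\le t} r^{d/2}\|u_n(r)\|_\infty$ for $t\le t_0$. This quantity is finite a priori because the initial data are mollified.
\item It extends the bound to $(0,T]$ by the semigroup property and the $L^1$ contraction, then passes to $\delta_x$ by weak-$*$ limits.
\item The $L^2$ bound then follows by interpolation, $\|u\|_2\le\|u\|_\infty^{1/2}\|u\|_1^{1/2}$.
\end{itemize}

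\textbf{The lower bound.} Your preferred route is sound and genuinely different from the paper's. You take Aronson's Gaussian lower bound for the lifted kernel, which applies because $\frac12\Delta p-\mathrm{div}(\beta p)$ is in divergence form with a bounded lower-order coefficient. You then periodise via \eqref{eq: density on Td}, keeping the translate with $|\tilde y+k-\tilde x|\le\sqrt d/2$.

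The paper instead:
\begin{itemize}
\item applies the parabolic Harnack inequality $\sup_y p(t/2,\cdot)\le\alpha\inf_y p(t,\cdot)$ to mollified solutions;
\item uses mass conservation to get $\sup_y p(t/2,\cdot)\ge 1$;
\item passes to the limit by duality and the strong Feller property.
\end{itemize}
Both arguments rest on De Giorgi--Nash--Moser theory. Yours additionally requires identifying the SDE density with Aronson's fundamental solution, via uniqueness. In exchange, the periodised Aronson \emph{upper} bound would give \eqref{eq: kernel estimarte Linfty} directly, and then \eqref{eq: kernel estimate H1} by interpolation, with no bootstrap at all.

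Your Girsanov/Chapman--Kolmogorov variant, however, does not close as written. A set-wise bound $\mathbb{P}(X_{t/2}^x\in A)\ge c|A|^2$, combined with mere continuity of $z\mapsto p(t/2,z,y)$, gives no pointwise lower bound. You would also need $p(t/2,\cdot,y)$ to be bounded below on a set of measure bounded away from zero, uniformly in $y$, and that is essentially the statement being proved.
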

\begin{proof}
    In the Appendix \ref{appendix}, see Corollary \ref{cor: semigroup estimates for dirac delta initial condition} for the proof of \eqref{eq: kernel estimate H1}. See Proposition \ref{prop:Linfty_bound_density} for the proof of \eqref{eq: kernel estimarte Linfty}. See Proposition \ref{prop: kernel positivity} for the proof of \eqref{eq: bound below positive kernel}.
\end{proof}

\subsection{Convergence of densities}\label{sec:L^2-initial condition}

In the rest of this section we work with an $L^2$ density for the initial condition of the FPE. Indeed this is the natural space for the operator-theoretic framework below: the transfer operator acts on $L^2(\T^d)$, and this choice is also compatible with the compactness and perturbative estimates proved later.

 Let $p_0\in L^2(\T^d)$ and $\delta\in[0,1)$. We denote by $p^{\delta,p_0}$ the (unique) mild/variational solution of
\begin{equation}
\left \lbrace
\begin{aligned}
\partial_t p^{\delta,p_0} &= \frac{1}{2}\Delta p^{\delta,p_0} - \mathrm{div}(p^{\delta,p_0}( b + \delta \eta)), \qquad &\text{ in } (0,T) \times \T^d,\\
p^{\delta,p_0}|_{ t = 0} &= p_0, \qquad &\text{ in }   \T^d.
\end{aligned}
\right.
    \label{eq: FPE with regular initial condition}
\end{equation}
We write $p^{0,p_0}$ for the corresponding solution when $\delta=0$. For $\delta\in(0,1)$ we introduce the difference quotient
\begin{equation}\label{eq:def_rdelta_p0}
r^{\delta,p_0}(t,\cdot):=\frac{p^{\delta,p_0}(t,\cdot)-p^{0,p_0}(t,\cdot)}{\delta},
\qquad t\in(0,T].
\end{equation}
Formally differentiating the Fokker-Planck equation \eqref{eq: FPE with regular initial condition} at $\delta=0$ suggests that the candidate limit $\dot r^{p_0}$
should solve a linear forced parabolic equation, where the forcing comes from differentiating the drift term
$$
-\mathrm{div}((b+\delta\eta)p^{\delta,p_0})|_{\delta=0}'  = -\mathrm{div}(b\dot r^{p_0}) -\mathrm{div}(\eta p^{0,p_0}).
$$
This motivates the definition of $\dot r^{p_0, \eta}:=\dot r^{p_0}$ via the following parabolic problem
\begin{equation}
\left\lbrace 
\begin{aligned}
\partial_t \dot { {r}}^{p_0}  &= \frac{1}{2}\Delta \dot{ r }^{p_0}  - \mathrm{div} (b \dot{ r}^{p_0} ) - \mathrm{div} ( \eta  p ^{0,p_0} ), \qquad &\text{ in } (0,T) \times \T^d, \\
 \dot r^{p_0} |_{t = 0} &= 0, \qquad & \text{ in } \T^d.
     \label{eq: eq for dot r regular initial condition}
\end{aligned}
\right.
\end{equation}

\begin{remark}\label{rem:density-relation}
The relation between the solution $p^{\delta,p_0}$ of the FPE \eqref{eq: FPE with regular initial condition} with regular initial condition $p_0$, and the solution $p^\delta$ of \eqref{eq: FPE in Td} with initial condition $\delta_x$ is
\begin{equation}
p^{\delta, p_0} (t,y) = \int_{\T^d}p^\delta (t,x,y) p_0(x)dx .
\label{eq: relation betwee p p_0 and kernel}
\end{equation}
Indeed, let $(X^{\delta,p_0}_t)_t$ be the unique solution of the SDE \eqref{eq: SDE in Td} starting from a random initial condition $X_0$ with density $p_0$. Then, for any Borel set $A \subset \T^d$, by the tower property of the conditional expectation,
$$
\mathbb{P}(X_t^{\delta,p_0} \in A) = \mathbb{E}\left[\mathds{1}_A (X_t^{\delta, p_0}) \right] = \mathbb{E}\left[\mathbb{E}\left[\mathds{1}_A (X_t^{\delta, p_0}) \mid X_0 \right] \right]= \mathbb{E}\left[\mathbb{P}(X_t^{\delta, p_0} \in A \mid X_0 )\right].
$$
By the Markov property, $\mathbb{P}$-a.s. it holds
$$
\mathbb{P}(X_t^{\delta, p_0} \in A \mid X_0 )  = g(X_0), \qquad \text{where } g(x):= \mathbb{P}(X_t^{\delta, x} \in A).
$$
Since $X_0$ has density $p_0$, taking the expectation yields
$$
\mathbb{P}(X_t^{\delta, p_0} \in A) = \mathbb{E}\left[ g(X_0)\right]= \int_{\T^d} \mathbb{P}(X_t^{\delta, x} \in A) \, p_0 (x) \, dx.
$$
Using that $\mathbb{P}(X_t^{\delta, x} \in A) = \int_A p^{\delta}(t,x,y) \, dy$ and applying Fubini's theorem, we obtain
$$
\mathbb{P}(X_t^{\delta ,p_0} \in A) = \int_{\T^d} \left( \int_A p^{\delta}(t,x,y)\, dy \right) p_0(x) \, dx = \int_A \left( \int_{\T^d}p^{\delta}(t,x,y)\, p_0(x) \, dx \right) dy,
$$
which implies \eqref{eq: relation betwee p p_0 and kernel} by the arbitrariness of $A$.
\end{remark}
    The previous discussion identifies the natural candidate for the derivative of the density with respect to the perturbation parameter. The next proposition shows that this formal computation is indeed correct: the difference quotient $r^{\delta,p_0}$ converges in $L^2$ to $\dot r^{p_0}$, and the convergence comes with an explicit quantitative error bound. In this sense, $\dot r^{p_0}$ is the first-order response of the solution of the Fokker-Planck equation.
\begin{proposition}[First-order expansion for $L^2$-initial data]
\label{prop: convergence density pdelta regular p0}
    Assume $b, \eta \in L^{\infty} (\T^d; \mathbb{R}^d)$, $p_0 \in L^2(\T^d)$ and $T>0$. There exists $
    C = C(  \|  b \|_{\infty }, \| \eta \|_{\infty } ,T,d) >0 
    $
    such that for all $\tau \in (0,T)$ and $\delta \in (0,1)$,
    $$
     \norm{ r^{\delta, p_0}(\tau) - \dot r^{p_0}(\tau) }_2 = \norm{ \frac{p^{\delta,p_0} (\tau) - p^{0,p_0} (\tau)}{\delta} - \dot r^{p_0}(\tau) }_2 \leq C \delta  \tau \| p_0 \|_2.
    $$
    In particular
    $$
    \sup_{0 \leq \tau  \leq T} \|r^{\delta, p_0}(\tau) - \dot r^{p_0}(\tau) \|_2 \leq  C  \delta   T \| p_0 \|_{2}
    $$
\end{proposition}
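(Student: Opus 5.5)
The plan is to write the equation satisfied by the error $e^\delta(t):=r^{\delta,p_0}(t)-\dot r^{p_0}(t)$ and close it with an energy estimate plus Gronwall. Subtracting the Fokker--Planck equations \eqref{eq: FPE with regular initial condition} for $\delta$ and for $0$, and dividing by $\delta$, gives
\begin{equation*}
\partial_t r^{\delta,p_0} = \tfrac12\Delta r^{\delta,p_0} - \mathrm{div}(b\, r^{\delta,p_0}) - \mathrm{div}(\eta\, p^{\delta,p_0}).
\end{equation*}
Subtracting \eqref{eq: eq for dot r regular initial condition} then yields
\begin{equation*}
\partial_t e^\delta = \tfrac12\Delta e^\delta - \mathrm{div}(b\, e^\delta) - \delta\,\mathrm{div}(\eta\, r^{\delta,p_0}), \qquad e^\delta(0)=0,
\end{equation*}
since $p^{\delta,p_0}-p^{0,p_0}=\delta r^{\delta,p_0}$. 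So the error solves the unperturbed linear equation, forced by a divergence term of size $\delta$.

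\textbf{Step 1: a priori bound on the solutions.} I first show $\sup_{t\le T}\|p^{\delta,p_0}(t)\|_2\le C\|p_0\|_2$ uniformly in $\delta\in[0,1)$. I test the equation with $p$ and integrate by parts, using
\begin{equation*}
\Big|\int (b+\delta\eta)p\cdot\nabla p\Big|\le \tfrac14\|\nabla p\|_2^2 + C\|p\|_2^2
\end{equation*}
(Young's inequality), followed by Gronwall. This is the standard variational estimate from the Appendix, or it can be read off from \eqref{eq: relation betwee p p_0 and kernel}.

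\textbf{Step 2: bound on the difference quotient.} I apply the same energy method to $r^{\delta,p_0}$. Its forcing term is handled by
\begin{equation*}
\int \eta\, p^{\delta,p_0}\cdot \nabla r \le \tfrac18\|\nabla r\|_2^2 + C\|p^{\delta,p_0}\|_2^2 .
\end{equation*}
With $r(0)=0$ this gives $\|r^{\delta,p_0}(t)\|_2^2 \le C\, t\,\|p_0\|_2^2$, and also the dissipation bound $\int_0^t\|\nabla r\|_2^2\le C t\|p_0\|_2^2$.

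\textbf{Step 3: energy estimate for $e^\delta$.} Testing with $e^\delta$ gives
\begin{equation*}
\tfrac12\frac{d}{dt}\|e^\delta\|_2^2 + \tfrac12\|\nabla e^\delta\|_2^2 \le \|b\|_\infty\|e^\delta\|_2\|\nabla e^\delta\|_2 + \delta\|\eta\|_\infty\|r^{\delta,p_0}\|_2\|\nabla e^\delta\|_2 .
\end{equation*}
I absorb the gradients by Young's inequality and use Step 2 to get
\begin{equation*}
\frac{d}{dt}\|e^\delta\|_2^2 \le C\|e^\delta\|_2^2 + C\delta^2 t\|p_0\|_2^2 .
\end{equation*}
Gronwall with $e^\delta(0)=0$ then gives $\|e^\delta(\tau)\|_2^2\le C\delta^2\tau^2\|p_0\|_2^2$, which is the claimed bound $C\delta\tau\|p_0\|_2$. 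The uniform statement follows by taking the supremum over $\tau\le T$.

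\textbf{Main obstacle: rigour.} The main difficulty is justifying the energy identities, since $b,\eta$ are only $L^\infty$ and the solutions are only variational, in $L^2(0,T;H^1)\cap C([0,T];L^2)$. I would invoke the Lions--Magenes framework: the time derivative lies in $L^2(0,T;H^{-1})$, which is enough for $\frac{d}{dt}\|u\|_2^2=2\langle\partial_t u,u\rangle$. This applies to $r^{\delta,p_0}$ and $e^\delta$ because they are linear combinations of variational solutions, and it is the same functional setting as the Appendix. A secondary point is the linear-in-$\tau$ rate: it needs the $\sqrt t$ growth of $\|r\|_2$ from Step 2. If only a uniform bound on $\|r\|_2$ were used, the argument would still give $C\delta\sqrt{\tau}$, which suffices for the "in particular" statement but not for the sharp form.
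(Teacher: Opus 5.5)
Your argument is correct. It has the same three-step structure as the paper's proof: a uniform $L^2$ bound on $p^{\delta,p_0}$, then $\|r^{\delta,p_0}(t)\|_2\le Ct^{1/2}\|p_0\|_2$, then the error equation forced by $-\delta\,\mathrm{div}(\eta\, r^{\delta,p_0})$. In both proofs, integrating the $\sqrt{s}$ growth of that forcing is what produces the linear rate $\delta\tau$.

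The analytic tool differs. The paper writes each equation in Duhamel form with the heat semigroup and moves the divergence onto the semigroup. It then uses $\|\nabla S(t)g\|_2\le Ct^{-1/2}\|g\|_2$ to reach inequalities such as $\|z(\tau)\|_2\le C\int_0^\tau(\tau-s)^{-1/2}\|z(s)\|_2\,ds+C\delta\tau\|p_0\|_2$, and closes them with the singular-kernel (fractional) Gr\"onwall lemma. You instead test each equation with its own solution and use Young's inequality and the classical Gr\"onwall lemma.

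Your route has several advantages:
\begin{itemize}
\item It is more elementary and stays inside the Lions--Magenes framework in which well-posedness is actually established.
\item It needs neither heat-kernel smoothing bounds nor the identification of the variational solution with its mild representation.
\item Its constants do not depend on $d$.
\item It yields dissipation bounds for free, so the difference quotient also converges in $L^2(0,T;H^1)$ at rate $\delta\tau$.
\end{itemize}
The paper's route buys uniformity with the rest of the appendix. The same Duhamel, smoothing and fractional Gr\"onwall machinery handles Dirac initial data. There the datum is not in $L^2$, testing with the solution is unavailable, and one needs $L^1\to L^2$ smoothing with the $t^{-1/2-d/4}$ singularity.

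Two small precisions.
\begin{itemize}
\item $e^\delta$ lies in $L^2(0,T;H^1)\cap H^1(0,T;H^{-1})$ because $\dot r^{p_0}$ is itself the variational solution of its forced problem, with forcing $-\mathrm{div}(\eta\, p^{0,p_0})\in L^2(0,T;H^{-1})$. It is not a linear combination of Fokker--Planck solutions.
\item Your aside that Step 1 ``can be read off'' from the kernel representation is not supported by the estimates in the paper. Those give only $\|p^{\delta,p_0}(t)\|_2\le Ct^{-d/4}\|p_0\|_2$, which blows up as $t\to 0$; a uniform bound would need $\sup_y\int p^\delta(t,x,y)\,dx$ bounded uniformly in small $t$, which is not proved there. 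This does not matter, since your energy estimate proves Step 1 directly.
\end{itemize}
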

\begin{proof}
    See Section \ref{proof: main result convergence p delta regular ic} in the Appendix.
\end{proof}
Once the first-order expansion is established, the next step is to understand how the derivative depends on the perturbation field $\eta$. The following proposition shows that, for each fixed time $\tau$, the map sending $\eta$ to the corresponding first-order variation $\dot r^{p_0,\eta}(\tau)$ is linear and continuous from the perturbation space into $L^2(\T^d)$. Moreover, its values have zero average, a property that will be important later when applying the resolvent on $V_{L^2}$.
\begin{proposition}[Linearity and continuity of the derivative map]
    \label{cont}
    Assume $b, \eta \in L^{\infty} (\T^d; \mathbb{R}^d)$, $p_0 \in L^2(\T^d)$ and $T>0$. Fix $\tau \in (0,T)$. Define  $$
    {D}_{p_0,\tau }^{L^\infty} :L^{\infty}(\mathbb{T}^d; \mathbb{R}^d)\to L^2(\mathbb{T}^d), \qquad \eta\mapsto \dot{r}^{p_0, \eta } (\tau):=\dot{r}^{p_0} (\tau),
    $$
    where $\dot{r}^{p_0}$ solves \eqref{eq: eq for dot r regular initial condition}. Then ${D}_{p_0,\tau }^{L^\infty}$ is linear and continuous.  Moreover, for every $\eta \in L^\infty(\T^d;\mathbb{R}^d)$,
    $$
    \int_{\T^d} D_{p_0,\tau}^{L^\infty}(\eta)\,dx=0,
    $$
\end{proposition}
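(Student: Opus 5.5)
Linearity follows from uniqueness for the linear forced problem \eqref{eq: eq for dot r regular initial condition}. With $p^{0,p_0}$ fixed (it does not depend on $\eta$), the equation is linear in the pair (solution, forcing $-\mathrm{div}(\eta p^{0,p_0})$). If $\dot r^{p_0,\eta_1}$ and $\dot r^{p_0,\eta_2}$ are the mild/variational solutions, then for $\alpha,\beta\in\mathbb{R}$ the combination $\alpha\dot r^{p_0,\eta_1}+\beta\dot r^{p_0,\eta_2}$ solves the problem with forcing $-\mathrm{div}((\alpha\eta_1+\beta\eta_2)p^{0,p_0})$ and zero initial datum. By uniqueness of the solution (Appendix \ref{appendix}) it coincides with $\dot r^{p_0,\alpha\eta_1+\beta\eta_2}$. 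Evaluating at time $\tau$ gives linearity of $D^{L^\infty}_{p_0,\tau}$.

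For continuity I would write the solution in mild form, using the semigroup $S(t)$ generated by $\tfrac12\Delta-\mathrm{div}(b\,\cdot)$, or simply by Duhamel with respect to the heat semigroup $e^{t\Delta/2}$ with the term $-\mathrm{div}(b\dot r)$ treated as forcing:
\[
\dot r(t)=-\int_0^t e^{(t-s)\Delta/2}\,\mathrm{div}\big(b\,\dot r(s)+\eta\,p^{0,p_0}(s)\big)\,ds .
\]
The key estimate is the smoothing bound $\|e^{t\Delta/2}\mathrm{div} F\|_2\le Ct^{-1/2}\|F\|_2$ on $\T^d$, obtained from Fourier multipliers. Using also the $L^2$ bound $\|p^{0,p_0}(s)\|_2\le C\|p_0\|_2$ from the appendix, we get
\[
\|\dot r(t)\|_2\le C\int_0^t (t-s)^{-1/2}\big(\|b\|_\infty\|\dot r(s)\|_2+\|\eta\|_\infty\|p_0\|_2\big)\,ds .
\]
A singular Gronwall (Henry-type) inequality then gives $\sup_{t\le T}\|\dot r(t)\|_2\le C(\|b\|_\infty,T,d)\,\|\eta\|_\infty\|p_0\|_2$, which is boundedness of the linear map. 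Alternatively, an energy estimate in the variational formulation gives the same bound: test with $\dot r$ and use Young's inequality on $\int b\dot r\cdot\nabla\dot r$ and on $\int\eta p^{0,p_0}\cdot\nabla\dot r$. This yields $\tfrac{d}{dt}\|\dot r\|_2^2\le C\|b\|_\infty^2\|\dot r\|_2^2+C\|\eta\|_\infty^2\|p^{0,p_0}\|_2^2$, and then ordinary Gronwall applies. I expect this step to be the main point needing care: making sure the estimate used applies to the same notion of solution as in the appendix. The energy route is the safest, since variational solutions lie in $L^2(0,T;H^1)$ and the test is justified.

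For the zero average, test the equation against the constant function $1$, which is admissible on the torus. All terms on the right are in divergence form (the Laplacian included), so $\tfrac{d}{dt}\int_{\T^d}\dot r\,dx=0$ in the distributional sense. Together with $\dot r(0)=0$ this gives $\int_{\T^d}\dot r(\tau)\,dx=0$. An alternative argument avoids the PDE: by \eqref{eq: mass conservation} and \eqref{eq: relation betwee p p_0 and kernel}, $\int p^{\delta,p_0}(\tau)=\int p_0$ for every $\delta$, so $\int r^{\delta,p_0}(\tau)=0$. Proposition \ref{prop: convergence density pdelta regular p0} gives $L^2$-convergence of $r^{\delta,p_0}(\tau)$ to $\dot r^{p_0}(\tau)$, hence $L^1$-convergence on the finite-measure torus, and the zero average passes to the limit.
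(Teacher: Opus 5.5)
Your proposal is correct and follows essentially the same route as the paper: Duhamel's formula with respect to the heat semigroup, the $t^{-1/2}$ smoothing bound for $e^{t\Delta/2}\mathrm{div}$, the uniform $L^2$ bound on $p^{0,p_0}$ and the fractional Gr\"onwall lemma give continuity, and the zero average follows by integrating the divergence-form equation over $\T^d$. Your linearity-via-uniqueness argument simply makes explicit what the paper means by ``the mild formulation shows linearity''; your energy-estimate and difference-quotient alternatives are also valid but not needed.
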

\begin{proof}
    See Section \ref{proof: derivative linear and bounded} in the Appendix.
\end{proof}
Consider now a Banach space $\cH$ such that $\cH \hookrightarrow L^\infty (\T^d; \mathbb{R}^d),$ and denote by $\iota_\cH$ the continuous embedding. It is straightforward to obtain the following result.
\begin{corollary}
    Let $\cH$ be a Banach space such that $\cH \hookrightarrow L^\infty (\T^d; \mathbb{R}^d)$. Under the assumptions of Proposition \ref{cont}, for every $\tau \in (0,T]$ the map
    $$
    D_{p_0, \tau}^\cH :  \cH \to L^2(\T^d), \qquad D_{p_0, \tau }^\cH (\eta) := (D^{L^\infty }_{p_0, \tau} \circ \iota_{\cH} ) (\eta)
    $$
    is linear and continuous. Moreover, $D_{p_0, \tau}^\cH (\eta)$ has zero average on $\T^d$ for any $\eta \in \cH$.
    \label{cor: extension continuity derivative in cV}
\end{corollary}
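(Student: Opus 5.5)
The corollary follows by composing the bounded operator of Proposition \ref{cont} with the continuous embedding $\iota_\cH$. Three things need checking: linearity, continuity, and zero average.

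\emph{Linearity.} Since $\iota_\cH\colon \cH\to L^\infty(\T^d;\mathbb{R}^d)$ is linear, and $D^{L^\infty}_{p_0,\tau}$ is linear by Proposition \ref{cont}, the composition $D^{\cH}_{p_0,\tau}=D^{L^\infty}_{p_0,\tau}\circ\iota_\cH$ is linear.

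\emph{Continuity.} By the definition of $\cH\hookrightarrow L^\infty(\T^d;\mathbb{R}^d)$ there is a constant $C_\cH$ with $\|\iota_\cH\eta\|_\infty\le C_\cH\|\eta\|_\cH$. Proposition \ref{cont} gives an operator norm $\|D^{L^\infty}_{p_0,\tau}\|$. Hence
\[
\|D^\cH_{p_0,\tau}(\eta)\|_2\le \|D^{L^\infty}_{p_0,\tau}\|\,C_\cH\,\|\eta\|_\cH ,
\]
which is boundedness.

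\emph{Zero average.} For $\eta\in\cH$, the element $\iota_\cH\eta$ lies in $L^\infty(\T^d;\mathbb{R}^d)$. The mean-zero statement of Proposition \ref{cont} therefore applies directly, giving $\int_{\T^d}D^\cH_{p_0,\tau}(\eta)\,dx=0$.

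The only point needing care is the endpoint $\tau=T$. The corollary allows $\tau\in(0,T]$, while Proposition \ref{cont} fixes $\tau\in(0,T)$. I would handle this by applying Proposition \ref{cont} with a larger horizon $T'>T$, so that $\tau=T\in(0,T')$. The solution $\dot r^{p_0}$ of \eqref{eq: eq for dot r regular initial condition} on $[0,T']$ restricts to the solution on $[0,T]$ by uniqueness, so the value at $\tau$ does not depend on the horizon. Beyond this, I see no obstacle.
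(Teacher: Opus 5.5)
Your proposal is correct and matches the paper, which gives no separate proof and treats the corollary as immediate from composing the bounded linear map $D^{L^\infty}_{p_0,\tau}$ of Proposition \ref{cont} with the continuous embedding $\iota_\cH$. Your remark on the endpoint $\tau=T$ is a sound refinement that the paper does not address. Your horizon-extension argument works, and the paper's proof of Proposition \ref{cont} in fact already establishes the bound \eqref{eq: bound dot r} for all $0<\tau\le T$.
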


\section{Transfer operator and spectral gap}
\label{sec: transfer operator and spectral gap}
\begin{remark}\label{rem:no_t}
    As stated above, the SDE that we consider in this article has a unique stationary measure $\mu$.
   In the following we fix the time $t=1$ and consider a time discretization of this system.
   In the notations thus drop the subscript \(t\). 
   Hence, in the following, the kernel $p^\delta(1,x,y)$ will be denoted as $p^\delta(x,y)$ (and as \(p(x,y)\) for \(\delta=0\)).  
\end{remark}

In this section, we define the transfer operator associated with the evolution of the SDE and state/prove some basic properties of these operators. We will extensively use the density \(p(x,y)\) and its properties proved in the previous section.
\begin{definition}
\label{def:koldef} The Kolmogorov operator $\cP:L^{\infty }(\mathbb{T}%
^{d})\rightarrow C(\mathbb{T}^{d})$ associated with the system %
\eqref{eq: SDE in Td}, for \(\delta=0\), is defined as follows. Let $\phi \in L^{\infty }(%
\mathbb{T}^{d})$, then for all $ x\in \mathbb{T}^{d}$ we set
\begin{equation*}
(\cP\phi )(x):=\mathbb{E}[\phi (X_1^x)],
\end{equation*}
where $X_1^x$ is the solution at time $1$ of the SDE \eqref{eq: SDE in Td} with initial condition $x$.
\end{definition}

The solution $X_1^x$, thanks to Proposition~\ref{prop:kernel}, admits a (unique) density $p = p(x,y),$ and thus we have
\begin{equation*}
(\cP\phi )(x)=\int_{\mathbb{T}^{d}}\phi (y)p(x,y)dy.
\end{equation*}%
By using H\"older inequality as follows
$$
\| \mathcal{P} \phi \|_\infty \leq \| \phi \|_\infty \| p (x, \cdot) \|_1.
$$
and the fact that $\| p (x, \cdot) \|_1 $ is bounded (by \eqref{eq: mass conservation}), we get
\begin{equation}
\norm{ \cP\phi }_{\infty }\leq \norm{ \phi}_{\infty }.  \label{inft}
\end{equation}

 If $\nu $ is a Borel signed measure on $%
\mathbb{T}^{d}$ 
\begin{equation*}
\int_{\mathbb{T}^{d}}(\cP\phi )(x) d\nu(x) = \int_{\mathbb{T}^{d}}\int_{\mathbb{T%
}^{d}}\phi (y)p(x,y)dyd\nu (x)
\end{equation*}%
supposing that $\nu $ has a density with respect to the Lebesgue measure {%
 $f\in L^{1}(\mathbb{T}^{d})$ i.e. $d\nu =f(x)dx$. We can thus
write 
\begin{equation}
\int_{\mathbb{T}^{d}}(\cP\phi )(x)d\nu (x)=\int_{\mathbb{T}^{d}}\phi (y)\left(
\int_{\mathbb{T}^{d}}p(x,y)f(x)dx\right) dy.  \label{duality}
\end{equation}%
}
Now we define the transfer operator $\cL:L^{1}(\mathbb{T}^{d})\rightarrow
L^{1}(\mathbb{T}^{d})$ associated with the evolution of the system \eqref{eq: SDE in Td} for \(\delta=0\) at time $1$.

\begin{definition}[Transfer operator]\label{def:to}
Given $f\in L^{1}(\mathbb{T}^{d})$ we define the measurable function $\cL f$ as follows. For almost each $y\in 
\mathbb{T}^{d}$ let 
\begin{equation}
\lbrack \cL f](y):=\int p(x,y)f(x)dx.  \label{eq:stoctransfer}
\end{equation}
\end{definition}

By \eqref{duality} we now get the duality relation between the Kolmogorov
and the transfer operator 
\begin{equation}
\int (\cP\phi )(x)d\nu (x)=\int \phi (y)[\cL f](y)dy,  \label{eq:stocduality}
\end{equation}
whenever \(d\nu = f(x)\ dx\). The following are some well-known and basic facts about integral operators with
kernels $p(x,y)$ in $L^{p}$, which will be useful:\\

\begin{itemize}
\item If $p\in L^2$ then using \eqref{eq: kernel estimate H1}, the operator $\cL:L^{2}\rightarrow L^{2}$ is bounded and  
\begin{equation}
\norm{\cL f}_{2}\leq \norm{p}_{2}\norm{f}_{2}  \label{KF}
\end{equation}%
(see Proposition 4.7 in II.\S 4 \cite{C}).

\item If $p\in L^{\infty}$, then using \eqref{eq: kernel estimarte Linfty}, 
\begin{equation}
\norm{\cL f}_{\infty }\leq \norm{p}_{\infty }\norm{f}_{1}  \label{KF2}
\end{equation}
and the operator $\cL:L^1\rightarrow L^{\infty }$ is bounded.
\end{itemize}
Now, we discuss the properties of the transfer operator on the space \(L^1\).

\begin{lemma}\label{lem:contraction}
The operator $\cL$ preserves the integral and is a weak contraction with
respect to the $L^1$ norm.\label{7}
\end{lemma}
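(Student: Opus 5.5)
The plan is to use only two properties of the kernel $p=p^0(1,\cdot,\cdot)$ from Proposition~\ref{prop:kernel}: nonnegativity and mass conservation \eqref{eq: mass conservation}, i.e. $\int_{\T^d}p(x,y)\,dy=1$ for every $x\in\T^d$. Both statements then follow from Fubini--Tonelli applied to the definition \eqref{eq:stoctransfer}.

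\emph{Preservation of the integral.} Fix $f\in L^1(\T^d)$. We first check that $\cL f$ is well defined and integrable. By Tonelli and mass conservation,
\[
\int_{\T^d}\int_{\T^d}p(x,y)\,|f(x)|\,dx\,dy=\int_{\T^d}|f(x)|\int_{\T^d}p(x,y)\,dy\,dx=\|f\|_1<\infty .
\]
Hence $x\mapsto p(x,y)f(x)$ is integrable for a.e.\ $y$, and $(x,y)\mapsto p(x,y)f(x)$ lies in $L^1(\T^d\times\T^d)$. Fubini then lets us exchange the order of integration:
\[
\int_{\T^d}\cL f(y)\,dy=\int_{\T^d}f(x)\Big(\int_{\T^d}p(x,y)\,dy\Big)dx=\int_{\T^d}f(x)\,dx .
\]
This is the same identity as the duality relation \eqref{eq:stocduality} with $\phi\equiv1$, since $\cP1=1$.

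\emph{Weak contraction.} Since $p\ge0$, we have pointwise a.e.\ $|\cL f(y)|\le\int_{\T^d}p(x,y)|f(x)|\,dx=\cL|f|(y)$. Integrating in $y$ and using the computation above applied to $|f|$ gives
\[
\|\cL f\|_1\le\int_{\T^d}\cL|f|\,dy=\||f|\|_1=\|f\|_1 .
\]
So $\|\cL\|_{L^1\to L^1}\le1$, and the bound is attained on nonnegative $f$.

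The only delicate point is measurability and joint integrability of the kernel, needed to justify Tonelli and Fubini. I would take the joint measurability of $(x,y)\mapsto p(x,y)$ as part of Proposition~\ref{prop:kernel}. It follows, for instance, from the periodisation formula \eqref{eq: density on Td} together with the measurability of the transition density in $(x,y)$. Mass conservation must hold for every $x$, or at least for a.e.\ $x$, which is exactly what \eqref{eq: mass conservation} provides. Beyond this, no further estimate is needed.
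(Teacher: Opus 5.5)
Your proof is correct and follows the paper's argument. Integral preservation comes from exchanging integrals and using $\int_{\T^d}p(x,y)\,dy=1$, which is equivalent to the duality relation \eqref{eq:stocduality} with $\phi\equiv 1$. The $L^1$ contraction comes from positivity of the kernel plus Tonelli. Beyond the paper, you spell out the integrability justification for Fubini, which the paper leaves implicit.
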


\begin{proof}
The first statement directly follows from \eqref{eq:stocduality} setting $%
\phi =1$. For the second part of the statement, using that the kernel $p(x,y) $ is positive thanks to Proposition \ref{prop:kernel}, we have
$$
\vert (\mathcal{L}f) (y) \vert \leq \int_{\T^d} p(x,y) \vert f (x) \vert \, dx.
$$
Integrating in space with respect to the variable $y$, applying Fubini-Tonelli, and using that $\int_{\T^d} p(x,y) \, d y  =1,$ we conclude
$$
\| \mathcal{ L} f \|_{1} \leq \int_{\T^d} \vert f (x) \vert \int_{\T^d} p(x,y) \, dy\, dx = \int_{\T^d} \vert f(x) \vert \, dx = \| f \|_{_1}.
$$
Hence $\mathcal{L}$ is a weak contraction in $L^1(\T^d).$
\end{proof}

\begin{remark}\label{rem:Markov_op}
     \(p(x,y)\) being positive (see Proposition~\ref{prop:kernel}) implies that $\cL$ is a positive operator. Thus, we also get that $\cL$ is a Markov operator having kernel $p$.
\end{remark}

Finally, in the following section, we define/construct the strong space that we want to study the operator \(\cL\) on.

Consider the space \(L^2(\bT^d)\), consisting of the square integrable functions on \(\bT^d\), as our strong space. Then our transfer operator \(\cL:L^2(\bT^d)\to L^2(\bT^d)\) is a compact operator being a kernel operator on a compact domain (see Proposition 4.7 in II.\S 4 \cite{C}).

\begin{lemma}\label{lem:simple_ev}
The transfer operator \(\cL\) has spectral gap on $L^2$ and it has 1 as its simple and only eigenvalue on the unit circle. Additionally, \(\cL\) has a unique normalised invariant function \(f_0\in L^2\) such that \(\int_{\T^d} f_0\ dx=1\).
\end{lemma}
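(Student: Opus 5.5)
The plan is to combine the compactness of $\cL$ on $L^2(\T^d)$ with the uniform lower bound \eqref{eq: bound below positive kernel} on the kernel, via a Doeblin-type argument. Since $\cL$ is compact, its spectrum away from $0$ consists of isolated eigenvalues of finite multiplicity. A spectral gap on $L^2$ therefore follows once we show two things: the spectrum on the unit circle is $\{1\}$, and $1$ is a simple eigenvalue. For then $\sup\{|\lambda| : \lambda\in\sigma(\cL)\setminus\{1\}\}<1$, because only finitely many eigenvalues lie outside any disc of radius $r>0$.

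\emph{Step 1 (spectral radius at most one).} First, $\cL$ preserves integrals. Moreover $\int p(x,y)\,dx=1$ fails in general, but duality with $\cP$ and \eqref{inft} gives $\|\cL\|_{L^1\to L^1}\le 1$ (Lemma \ref{lem:contraction}). Any $L^2$ eigenfunction $\cL g=\lambda g$ with $g\neq 0$ lies in $L^1$, since the torus has finite measure. It follows that $|\lambda|\le 1$.

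\emph{Step 2 (Doeblin contraction on zero-mean functions).} Set $c>0$ as in \eqref{eq: bound below positive kernel} at $t=1$, and write $p(x,y)=c+q(x,y)$ with $q\ge 0$ and $\int q(x,y)\,dy=1-c$. For $g\in L^1$ with $\int g=0$ we have $\int c\, g(x)\,dx=0$, so $\cL g(y)=\int q(x,y)g(x)\,dx$. Fubini then gives $\|\cL g\|_1\le (1-c)\|g\|_1$.

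Now let $\cL g=\lambda g$ with $|\lambda|=1$. Here we work with complex $g$ and apply the estimate to real and imaginary parts, or repeat it directly for complex functions. If $\lambda\neq 1$, integrating the eigen-equation gives $\lambda\int g=\int g$, hence $\int g=0$. Step 2 then yields $\|g\|_1\le(1-c)\|g\|_1$, so $g=0$. Thus $1$ is the only eigenvalue on the unit circle.

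If $\lambda=1$ and $g_1,g_2$ are two eigenfunctions, the combination $g_1\int g_2-g_2\int g_1$ is a zero-mean fixed point, hence zero by the same estimate. So the eigenspace has dimension at most one.

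\emph{Step 3 (existence, positivity, normalisation, and algebraic simplicity).} Existence of an eigenvalue $1$ follows from the dual side: $\cP 1=1$, so $1\in\sigma(\cL^*)$, and by compactness $1$ is an eigenvalue of $\cL$. Alternatively, one can take the invariant measure of Remark \ref{rem:stationary_measure}, whose density is $\cL f_0$ and lies in $L^\infty\subset L^2$ by \eqref{KF2}.

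Any fixed point $f$ with $\int f\neq 0$ can be normalised. Such an $f$ exists: otherwise every fixed point would have zero mean and hence vanish. Positivity of the normalised $f_0$ follows from $|f_0|=|\cL f_0|\le\cL|f_0|$ together with the equality of integrals, which forces $\cL|f_0|=|f_0|$ and hence, by uniqueness, $|f_0|=\pm f_0$.

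To exclude a Jordan block, suppose $(\cL-I)h=f_0$. Integrating gives $0=\int f_0=1$, a contradiction. So $1$ is algebraically simple, and the Riesz projection is $g\mapsto f_0\int g$.

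The main obstacle is the bookkeeping between the $L^1$ contraction and the $L^2$ spectral statement. The $L^1$ argument has to be transferred to $L^2$ eigenfunctions, and it must be checked that the spectral gap is genuinely on $L^2$. I expect this to be handled by the compactness and finite multiplicity of $\cL$ on $L^2$, combined with the inclusion of $L^2$ eigenfunctions in $L^1$. No quantitative $L^2$ contraction rate is needed.
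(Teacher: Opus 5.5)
Your proposal is correct and follows essentially the same route as the paper. Both use the Doeblin splitting $p=c+q$ to get $\|\cL g\|_1\le(1-c)\|g\|_1$ on zero-mean functions, integrate the eigen-equation to force zero mean when $\lambda\neq 1$, obtain uniqueness from a zero-mean difference of fixed points, and get existence of the eigenvalue $1$ from $\cL^*\mathbf{1}=\mathbf{1}$ together with compactness; your extra checks (that some fixed point has nonzero mean, and the exclusion of a Jordan block by integrating $(\cL-I)h=f_0$) tighten points the paper leaves implicit, and the same Doeblin estimate applied to every eigenvalue $\lambda\neq 1$, not only those with $|\lambda|=1$, gives directly $|\lambda|\le 1-c$, which makes your Step 1 and the accumulation-at-zero argument for the gap unnecessary.
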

\begin{proof}

\emph{Step $1$: $1$ is an eigenvalue for $\cL$}.
    By Lemma~\ref{lem:contraction}, the operator \(\cL\) is integral preserving, which implies that \(1\) lies in the spectrum of \(\cL\). The operator being compact implies that $1$ is an eigenvalue of $\mathcal{L}$. Indeed, consider the adjoint operator \(\cL^*:L^2\to L^2\) defined by the duality relation \(\langle \cL f, g\rangle = \langle f, \cL^*, g\rangle\). \(\cL\) being integral preserving, we have, for all \(f\in L^2\) and the constant function \(\bf{1}\), \(\langle f, \cL^*{\bf{1}}\rangle = \langle \cL f, {\bf{1}}\rangle = \int \cL f \ dx = \int f \ dx = \langle f, {\bf{1}}\rangle\). This implies \(\cL^*{\bf{1}}= {\bf{1}}\), thus 1 is in the spectrum of both \(\cL^*\) and \(\cL\). Since \(\cL\) is compact, every non-zero eigenvalue is an eigenvalue with finite multiplicity. Since $1 \in \sigma (\mathcal{L})$, it follows that $1$ is an eigenvalue of $\mathcal{L}.$
    
   \emph{Step $2$: if $\theta \in \mathbb{C} \setminus \{ 0,1 \}$ is an eigenvalue for $\cL$, then $\vert \theta \vert \leq \rho <1$}. Let $c:= \inf_{x,y} p(x,y) \in (0,1)$ be given by Proposition \ref{prop:kernel}. Then, consider the decomposition
   $$
   p(x,y) = c +  q (x,y),  \qquad  q(x,y) \geq 0 .
   $$
   Note that, since $\int_{\T^d} p (x,y) \, dy =1$, then $\int_{\T^d} q(x,y)\, dy = 1 - c$. Now, let $g \in L^1(\T^d)$ be such that $\int_{\T^d} g (x) \, dx  =0$, then
   $$
   (\mathcal{L} g)(y) = \int_{\T^d} p (x,y) g(x) \, dx = c \int_{\T^d} g(x) \, dx + \int_{\T^d} q(x,y) g(x) \, dx = \int_{\T^d} q(x,y) g(x) \, dx.
   $$
   Therefore, integrating in space with respect to the variable $y$, and applying Fubini-Tonelli, we have
   $$
   \| \mathcal{L} g \|_{1} \leq \int_{\T^d} \int_{\T^d} q(x,y) \vert g (x) \vert \, dx \, dy = \int_{\T^d} \vert g(x) \vert \left[\int_{\T^d} q(x,y) \, dy \right] \, dx = (1-c) \int_{\T^d} \vert g(x) \vert \, dx  = (1-c) \| g \|_{1}.
   $$
   So we have proved that
   \begin{equation}
\| \mathcal{L} g \|_1 \leq \rho \| g \|_1, \qquad \rho \in (0,1), \qquad  \text{for any }g \in L^1(\T^d), \ \int_{T^d}g  =0.
       \label{eq: strict contraction on zero average}
   \end{equation}
   Now, assume by contradiction that $ \mathcal{L} f_\theta =  \theta f_\theta$, for some $\vert \theta \vert =1$, $\theta \neq 1$, and $f_\theta \in L^2(\T^d)$. Since $\mathcal{L}$ is integral preserving, we have
   $$
   \int f_\theta = \int \mathcal{L} f_\theta = \theta  \int f_\theta, 
   $$
   but hence $(1- \theta ) \int f_\theta  = 0$. Since we are assuming that $\theta \neq 1$, we conclude $\int f_\theta  = 0,$ and thus $f_\theta$ belongs to the zero average space. Hence, by \eqref{eq: strict contraction on zero average}, we have
   $$
   \vert \theta \vert \| f_\theta \|_1 = \| \mathcal{L} f_\theta \|_1 \leq \rho \| f_\theta \|_1, \qquad \rho \in (0,1).
   $$
   Thus, since $f_\theta \neq 0$, we conclude $\vert \theta \vert \leq \rho < 1$. This is a contradiction, and hence $1$ is the only eigenvalue on the unit circle.

   \emph{Step $3$: $1$ is a simple eigenvalue.} Denote by $u,v \in L^2(\T^d)$ two eigenvectors associated to the eigenvalues $1$. Assume moreover that they are normalised in such a way that
   $$
   \int_{\T^d} u \, dx = \int_{\T^d} v \, dx  = 1.
   $$
   Then
   $$
   \int_{\T^d} u-v \, dx = \int_{\T^d} u \, dx - \int_{\T^d} v \, dx = 0.
   $$
   Thus, we can apply the contraction inequality \eqref{eq: strict contraction on zero average} to the zero-average function $g:= u-v$, we get
   $$
    \| g \|_1 =  \| \mathcal{L} g\|_1 \leq \rho \| g \|_1, \qquad \rho \in (0,1).
   $$
   Hence $g = 0$, that is $u=v$. Hence $u$ and $v$ belongs to the same one-dimensional eigenspace.  

   Lastly, the invariant function $f_0 \in L^2$ can be obtained by considering an eigenfunction of $\mathcal{L}$ corresponding to the eigenvalue $1$ on $L^2(\T^d),$ and then normalising it to have $\int_{\T^d} f_0 \, dx  =1.$
\end{proof}
\begin{remark}\label{rem:justification}
    Since Remark~\ref{rem:stationary_measure} asserts that the SDE under consideration has a unique stationary measure, and the above lemma asserts the uniqueness of the invariant measure. We remark that the unique stationary measure $\mu$ of the SDE \eqref{eq: SDE in Td}
     must be also invariant for $\cL$, and this must be the unique invariant probability measure found above. This also implies that $\mu$ has a density in the strong space $L^2$. We conclude that the unique stationary measure of the SDE is indeed equal to the invariant measure of the transfer operator.
\end{remark}

\begin{definition}
We say that an operator $\cL:L^{2}\rightarrow L^{2}$ has  \emph{exponential contraction of the zero average space} $V$ if there are $C\geq 0$ and $%
\lambda <0$ such that $\forall g\in V$
\begin{equation}
\Vert \cL^{n}g\Vert _{2}\leq Ce^{\lambda n}\Vert g\Vert _{2}  \label{equil}
\end{equation}%
for all $n\geq 0$.
\end{definition}

\begin{proposition}\label{prop:conv_eq}
 The operator \(\cL\) has exponential contraction of the zero average space.
\end{proposition}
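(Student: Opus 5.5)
The plan is to deduce the exponential contraction on $V_{L^2}$ from the spectral picture in Lemma~\ref{lem:simple_ev}, using only compactness of $\cL$ and the $L^1$ contraction \eqref{eq: strict contraction on zero average}.

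First, $V_{L^2}$ is invariant under $\cL$, since $\cL$ preserves the integral (Lemma~\ref{lem:contraction}). It is also closed in $L^2$, being the kernel of the bounded functional $g\mapsto\langle g,\mathbf 1\rangle$. So the restriction $\cL|_{V}:V_{L^2}\to V_{L^2}$ is again a compact operator. We then show that its spectral radius $r$ satisfies $r<1$.

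By compactness, every nonzero spectral point of $\cL|_V$ is an eigenvalue $\theta$ with an eigenfunction $g\in V_{L^2}\setminus\{0\}$. Since $L^2(\T^d)\subset L^1(\T^d)$ on the torus, $g$ is a zero-average $L^1$ function, and \eqref{eq: strict contraction on zero average} gives $|\theta|\,\|g\|_1\le\rho\|g\|_1$, hence $|\theta|\le\rho=1-c<1$. For complex eigenvalues we work on the complexification; the estimate \eqref{eq: strict contraction on zero average} still holds because the proof only used $|\int q g|\le\int q|g|$. If the spectrum is $\{0\}$, then $r=0$. Otherwise, compactness means the nonzero eigenvalues accumulate only at $0$, so $r\le\rho<1$.

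Gelfand's formula $r=\lim_n\|\cL|_V^n\|^{1/n}$ then finishes the argument. Fix any $r<e^{\lambda}<1$ with $\lambda<0$. There is $n_0$ with $\|\cL^n g\|_2\le e^{\lambda n}\|g\|_2$ for all $n\ge n_0$ and $g\in V_{L^2}$. For $n<n_0$ we bound $\|\cL^n\|\le\|\cL\|^n$ using \eqref{KF}. Taking $C:=\max_{n<n_0}\|\cL\|^ne^{-\lambda n}\vee 1$ gives \eqref{equil}.

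A more quantitative alternative avoids the spectral radius. It iterates the $L^1$ contraction, $\|\cL^n g\|_1\le\rho^n\|g\|_1\le\rho^n\|g\|_2$, and then upgrades to $L^2$ through the smoothing bound. Since $\int g=0$, we have $\cL^{n}g=\cL(\cL^{n-1}g)$, so \eqref{KF2} gives $\|\cL^n g\|_2\le\|\cL^n g\|_\infty\le\|p\|_\infty\|\cL^{n-1}g\|_1\le\|p\|_\infty\rho^{n-1}\|g\|_2$. This yields \eqref{equil} with $\lambda=\log\rho$ directly, so I would present this as the main proof.

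The only delicate point is checking that \eqref{eq: kernel estimarte Linfty} provides a bounded kernel at time $1$, i.e. that $\sup_{x}\|p(x,\cdot)\|_\infty<\infty$ uniformly in $x$; the cited estimate gives exactly this. The $n=0$ case is trivial once $C\ge1$.
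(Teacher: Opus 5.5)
Your proof is correct, and your main argument takes a genuinely different route from the paper.

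\textbf{The paper's route.} The paper argues spectrally. From the spectral gap in Lemma~\ref{lem:simple_ev} it writes $\cL=\Pi+\mathcal{Q}$, with $\Pi$ the rank-one projection onto the eigenspace of $1$. It then notes that $\cL^n f=\mathcal{Q}^n f$ on $V_{L^2}$, and uses that $\mathcal{Q}$ has spectral radius $<1$ to get $\|\mathcal{Q}^n\|_2\le Ce^{-\gamma n}$. That last step is really Gelfand's formula. The paper's phrase ``that is $\|\mathcal{Q}\|_2<1$'' does not follow from the spectral-radius bound.

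\textbf{Your first variant.} This is essentially the paper's argument, done more carefully. You restrict to the closed invariant subspace $V_{L^2}$ directly, so you need neither the decomposition nor the algebraic simplicity of the eigenvalue $1$.

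\textbf{Your main proof.} This is a Doeblin-type argument. You iterate the $L^1$ contraction \eqref{eq: strict contraction on zero average} and pay for one step of $L^1\to L^\infty$ smoothing via \eqref{KF2}. This gives $\|\cL^n g\|_2\le (\|p\|_\infty/\rho)\,\rho^{n}\|g\|_2$. It avoids compactness and spectral theory entirely, and the constants are explicit: $\lambda=\log(1-c)$ and $C=\max\{1,\|p\|_\infty/(1-c)\}$. The bounds \eqref{eq: kernel estimarte Linfty} and \eqref{eq: bound below positive kernel} are uniform in $\delta\in[0,1)$. So the same lines give exponential contraction of every $\cL_\delta$ on $V_{L^2}$ with $\delta$-independent constants. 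That would make the perturbative argument in Proposition~\ref{prop:linearresponse}(ii) unnecessary.

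\textbf{What the spectral route buys.} In exchange, it gives a sharper rate. Any $\gamma<-\log r(\cL|_{V_{L^2}})$ is admissible, and this $L^2$ spectral radius can be strictly smaller than the Doeblin coefficient $1-c$.
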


\begin{proof}
  By Lemma~\ref{lem:simple_ev}, we get that \(\cL:L^2(\bT^d)\to L^2(\bT^d)\) has spectral gap, which implies it can be decomposed as
\[
\cL=\Pi+{\mathcal Q}
\]
where \(\Pi\) is the projection operator onto the one-dimensional eigenspace corresponding to the eigenvalue 1, and the spectral radius of \({\mathcal Q}\) is strictly less than 1, that is \(\norm{{\mathcal Q}}_{2}<1\). Furthermore, for \(f\in V_{L^2}\), we have \(\cL^n f={\mathcal Q}^n f\), and
\[
\norm{{\mathcal Q}^n f}_2\leq \norm{{\mathcal Q}^n}_2\norm{f}_2 \leq Ce^{-\gamma n}\norm{f}_2
\]
for some $C\geq0 $ and $\gamma \in (0,1)$ and hence the claim.
\end{proof}
\vspace{1cm}

\section{Perturbations and linear response}
\label{sec: perturbations and linear response}

For $\bar{\delta}>0$ and $\delta\in [0,\bar{\delta})$, we consider a family of integral-preserving, compact operators $\cL_{\delta }:L^{2}\rightarrow L^{2}$  associated to the SDE \eqref{eq: SDE in Td} ; we think of $\cL_{\delta }$ as perturbations of $\cL_{0}$, defined as
\[
\cL_\delta f(y) := \int p^\delta(x,y)f(x)\ dx.
\]
By definition, for every $\delta\in [0,\bar{\delta})$, the operators \(\cL_\delta\) are defined and compact as operators on \(L^2(\bT^d)\).
We say that $f_\delta\in L^2$ is an \emph{invariant density} of $\cL_\delta$ if $\cL_\delta f_\delta=f_\delta$.
We will see that under natural assumptions, the operators $\cL_\delta$, $\delta\in[0,\bar{\delta})$, have a family of normalized invariant functions $f_{\delta } \in L^{2}$ (see Proposition~\ref{prop:linearresponse}).
Furthermore, for suitable perturbations the invariant densities vary smoothly in $L^{2}$ and we get an explicit formula for the resulting derivative $\frac{df_\delta}{d \delta}$.

    Using Remark~\ref{rem:density-relation}, one can see that, for any \(\delta\in [0,1)\), \(f\in L^2(\bT^d)\), the relation between the transfer operator \(\cL_\delta\) and the density  \(p^{\delta,f}(t,\cdot)\) (see Section~\ref{sec:L^2-initial condition}), is given as follows:
    \begin{equation}\label{eq:to-density_relation}
        \cL_\delta f(y) = p^{\delta,f}(1,y).
    \end{equation}

Now that we have detailed information on the structure of the transfer operator and on the perturbative framework we intend to implement, we turn to stationary measures and to the linear response of the stationary measure under perturbations of the underlying system. Since the operators under consideration are compact Markov operators, these questions can be treated in a rather direct way within the classical spectral-perturbative approach. 
In particular, we shall rely on the following statement proved in \cite{AFG22}, which is especially well suited to the applications developed in the remainder of this work. We also note that in \cite{AFG22} the result is formulated and proved for the space $L^2{([0,1]})$, but the argument carries over verbatim to the setting relevant for us, namely $L^2({\T^d})$.

\begin{lemma}\label{lem:L_delta_cty}
Assume $b, \eta \in L^{\infty}(\T^d; \mathbb{R}^d).$ Let $\cL_{\delta }$ be the family of transfer operators associated with the kernels $p^{\delta }$ as above. Then there exist constants $C>0$ such that for each $\delta \in \lbrack 0,\overline{\delta })$, 
\begin{equation}
\norm{\cL_{\delta }-\cL_{0}}_2\leq \delta C.
\end{equation}
Furthermore, let \(f_0\) be the normalised invariant function of $\mathcal{L}_0$ and denote by $\dot r^{f_0}$ be the solution of \eqref{eq: eq for dot r regular initial condition}. Then $\dot r^{f_0} \in V_{L^2}$ and
\[
\underset{\delta \rightarrow 0}{\lim }  \frac{(\cL_{\delta}-\cL_{0})}{%
\delta }f_0 = \dot r^{f_0}, \qquad \text{ in } L^2(\T^d).
\]
\end{lemma}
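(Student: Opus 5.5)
The plan is to read both claims off the FPE first-order expansion, Proposition \ref{prop: convergence density pdelta regular p0}, through the identity \eqref{eq:to-density_relation}, namely $\cL_\delta f = p^{\delta,f}(1,\cdot)$. For any $f\in L^2(\T^d)$ this gives
$$
\frac{(\cL_\delta-\cL_0)f}{\delta} = r^{f}_\delta(1) := \frac{p^{\delta,f}(1)-p^{0,f}(1)}{\delta}.
$$
To place ourselves inside the range of Proposition \ref{prop: convergence density pdelta regular p0}, which requires $\tau\in(0,T)$, we fix some $T>1$, say $T=2$, and take $\tau=1$. The proposition then yields
$$
\Big\|\frac{(\cL_\delta-\cL_0)f}{\delta}-\dot r^{f}(1)\Big\|_2\le C\delta\|f\|_2,
$$
with $C$ depending only on $\|b\|_\infty,\|\eta\|_\infty,d$.

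For the operator-norm bound it suffices to control $\|\dot r^{f}(1)\|_2\le C'\|f\|_2$ uniformly in $f$. Once we have this, we get $\|(\cL_\delta-\cL_0)f\|_2\le \delta(C'+C\delta)\|f\|_2\le \delta(C'+C)\|f\|_2$ for $\delta<1$, and hence $\|\cL_\delta-\cL_0\|_2\le C''\delta$.

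The linear bound on $\dot r^f(1)$ should come from the same energy/semigroup estimates in the Appendix that give the continuity of $D^{L^\infty}_{p_0,\tau}$ in Proposition \ref{cont}. Equation \eqref{eq: eq for dot r regular initial condition} is linear in the forcing $-\mathrm{div}(\eta p^{0,f})$, and the standard parabolic estimate, obtained by testing with $\dot r$, integrating by parts, and absorbing the $b$-term via Young's inequality and Gronwall, gives
$$
\|\dot r^f(1)\|_2^2\lesssim \int_0^1\|\eta p^{0,f}(s)\|_2^2\,ds\le \|\eta\|_\infty^2\int_0^1\|p^{0,f}(s)\|_2^2\,ds.
$$
We then combine this with the $L^2$ energy bound $\sup_{s\le 1}\|p^{0,f}(s)\|_2\le C\|f\|_2$ for the FPE, where $b$ is only bounded. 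This is the step I expect to require care. If the Appendix's proof of Proposition \ref{cont} only states boundedness in $\eta$ for fixed $p_0$, we re-run it while tracking the dependence on $\|p_0\|_2$, which is linear. An alternative route to the same bound is the Duhamel formula for the heat semigroup together with $\|\nabla e^{t\Delta/2}\|_{2\to2}\lesssim t^{-1/2}$, which is integrable.

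For the second claim we take $f=f_0\in L^2$, which is available by Lemma \ref{lem:simple_ev}. The displayed estimate gives convergence of the difference quotient to $\dot r^{f_0}(1)$ in $L^2$ at rate $O(\delta)$. Here $\dot r^{f_0}$ is evaluated at time $1$, consistently with Remark \ref{rem:no_t}. Finally, $\dot r^{f_0}(1)=D^{L^\infty}_{f_0,1}(\eta)$ has zero average by Proposition \ref{cont}, so it lies in $V_{L^2}$. Alternatively, the zero average follows directly as the $L^2$-limit (hence the limit of integrals on the torus) of $(\cL_\delta-\cL_0)f_0/\delta$, each term of which has zero integral because both operators preserve integrals (Lemma \ref{lem:contraction}).
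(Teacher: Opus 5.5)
Your proof is correct. For the second claim (convergence of the difference quotient at $f_0$, and zero average via integral preservation) it is exactly the paper's argument. For the operator-norm bound you take a different route.

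The paper bounds $\|\cL_\delta-\cL_0\|_2$ through \eqref{KF} by the Hilbert--Schmidt norm of the kernel difference $p^\delta(1,\cdot,\cdot)-p^0(1,\cdot,\cdot)$ in $L^2(\T^d\times\T^d)$. It controls this norm with the Dirac-initial-datum estimate of Proposition \ref{prop:new_prop}, namely $\|w^\delta(1)\|_2\le C\delta$ uniformly in $x$. That estimate in turn rests on Lemma \ref{lemma: L1 estimate w delta} and Corollary \ref{cor: semigroup estimates for dirac delta initial condition}.

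You instead stay entirely within the $L^2$-initial-data theory. You write $(\cL_\delta-\cL_0)f=\delta\, r^{\delta,f}(1)$ via \eqref{eq:to-density_relation}, and bound $\|r^{\delta,f}(1)\|_2$ linearly in $\|f\|_2$ using Proposition \ref{prop: convergence density pdelta regular p0} together with a bound on $\dot r^f(1)$. The step you flagged as needing care is already done in the paper. The estimate \eqref{eq: bound dot r} in the proof of Proposition \ref{cont} tracks the linear dependence on $\|p_0\|_2$, and it is the Duhamel version of your argument. Even more directly, \eqref{eq:L2_bound_r_delta_app} in Step 2 of the appendix proof gives $\|r^{\delta,f}(\tau)\|_2\le C\tau^{1/2}\|f\|_2$ uniformly in $\delta$, so you do not need $\dot r$ for this part at all.

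Your route is more economical. The whole lemma follows from the Lions--Magenes/Duhamel $L^2$ estimates and the identification \eqref{eq:to-density_relation}, with no rough-data machinery. The paper's route gives more: the Hilbert--Schmidt bound on the kernels themselves, which is stronger than the operator-norm bound needed here. Your explicit choice $T=2$, so that $\tau=1$ lies in $(0,T)$, is a harmless clarification that the paper leaves implicit.
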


\begin{proof}
Let \(f\in L^2(\bT^d)\), then using \eqref{KF}, and Proposition \ref{prop:new_prop}, there exists $C=C(\|b\|_{{\infty}}, \| \eta \|_{{\infty}},d,T)>0$ such that for any \(\delta\in [0,1)\), we have the following
\begin{eqnarray*}
\norm{\cL_{\delta }f-\cL_{0}f}_2  &\leq \norm{p^\delta-p}_2\norm{f}_2 \leq \delta C \norm{f}_2
\end{eqnarray*}%

 We now prove the second claim. Let $f_0$ be the normalized invariant function of $\cL_0$. By \eqref{eq:to-density_relation}
 $$
 \cL_\delta f_0 = p^{\delta, f_0} (1, \cdot), \qquad \cL_0 f_0 = p^{0, f_0}(1, \cdot).
 $$
 Hence
 $$
    \frac{\cL_\delta - \cL_0}{\delta} f_0 =
    \frac{p^{\delta, f_0} (1, \cdot) - p^{0, f_0}(1, \cdot )}{\delta }.
 $$
 Therefore, using Proposition~\ref{prop: convergence density pdelta regular p0}
 $$
 \lim_{\delta \to 0} \left\|  \frac{\cL_{\delta} - \cL_0}{ \delta } f_0 - \dot{r}^{f_0}\right\|_2 = \lim_{\delta \to 0} \left\|  \frac{p^{\delta, f_0} (1, \cdot) - p^{0, f_0}(1, \cdot )}{\delta } -  \dot{r}^{f_0}\right\|_2 = 0.
 $$
It remains to show that $ r^{f_0}\in V_{L^2}.$ Since both $\cL_{\delta}$ and $\cL_0$ preserve the integral, for every $\delta >0$ we have
 $$
 \int_{\T^d} \frac{ \cL_{\delta } - \cL_0}{\delta }f_0 \, dx = \frac{1}{\delta } \left(  \int_{\T^d} \cL_{\delta} f_0 \, dx - \int_{\T^d} \cL_0 f_0 \, dx\right) = 0.
 $$
 Passing to the limit in $L^2(\T^d),$ we obtain $\int_{\T^d} \dot r^{f_0} \, dx  = 0$, and in conclusion $\dot r^{f_0} \in V_{L^2}.$
\end{proof}
As a preliminary step towards Theorem~\ref{thm: observables linear response}, we state the
following proposition, which is adapted from \cite[Theorem~1]{AFG22} to our
family of transfer operators. The results established above verify the
perturbative assumptions needed in that framework, while compactness and the
convergence properties of the operators yield the spectral information required
to control the invariant densities. This gives existence and uniqueness of
normalized invariant densities for sufficiently small perturbations, together
with the corresponding linear response formula for the invariant density.
\begin{proposition}
\label{prop:linearresponse}
Assume that $b,\eta \in L^{\infty}(\T^d;\mathbb{R}^d)$, fix $\bar\delta>0$, and
let $(\cL_\delta)_{\delta\in[0,\bar\delta)}$ be the family of transfer operators
introduced above. The following holds.
\begin{enumerate}
    \item[(i)] Then the unperturbed operator $\cL_0$ admits a unique normalized invariant density
$f_0\in L^2(\T^d)$, with $
\int_{\T^d} f_0\,dx=1, $
and $\cL_0$ is exponentially contracting on $V_{L^2}$.
\item[(ii)] There exists
$\delta_2\in(0,\bar\delta]$ such that, for every $\delta\in[0,\delta_2)$, the
operator $\cL_\delta$ admits a unique normalised invariant density
$f_\delta\in L^2(\T^d)$ satisfying $
\int_{\T^d} f_\delta\,dx=1,$ and $\cL_\delta$ is exponentially contracting on $V_{L^2}$.
\item[(iii)]  The resolvent $
(Id-\cL_0)^{-1}\colon V_{L^2}\to V_{L^2}$ is continuous, and if $\dot r^{f_0}\in V_{L^2}$ denotes the solution of
\eqref{eq: eq for dot r regular initial condition}, then
\[
\lim_{\delta\to0}
\left\|
\frac{f_\delta-f_0}{\delta}-(Id-\cL_0)^{-1}\dot r^{f_0}
\right\|_2=0.
\]
In particular, $(Id-\cL_0)^{-1}\dot r^{f_0}$ gives the first-order response of the
invariant density.
\end{enumerate}
\end{proposition}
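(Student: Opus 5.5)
Part (i) is already contained in Lemma~\ref{lem:simple_ev} and Proposition~\ref{prop:conv_eq}. Together they give uniqueness of the normalised invariant density $f_0\in L^2(\T^d)$ and the bound $\|\cL_0^n g\|_2\le Ce^{-\gamma n}\|g\|_2$ for $g\in V_{L^2}$. For (ii) I would argue in two ways and use whichever closes more cleanly. The first is to repeat the proof of Lemma~\ref{lem:simple_ev} verbatim for each $\cL_\delta$. This works because \eqref{eq: bound below positive kernel} gives a lower bound $c>0$ on $p^\delta(1,\cdot,\cdot)$ uniformly in $\delta\in[0,1)$. Hence the Doeblin decomposition $p^\delta=c+q^\delta$ yields $\|\cL_\delta g\|_1\le(1-c)\|g\|_1$ on zero-average functions. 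Compactness of $\cL_\delta$ on $L^2$ (its kernel is in $L^2$ by \eqref{eq: kernel estimate H1}) together with integral preservation then gives a simple eigenvalue $1$ with no other peripheral spectrum. The alternative, which also yields $\delta$-uniform contraction constants, is a perturbative argument. Lemma~\ref{lem:L_delta_cty} gives $\|\cL_\delta-\cL_0\|_2\le C\delta$. Choose $N$ with $\|\cL_0^N|_{V_{L^2}}\|_2\le 1/4$. Telescoping gives
$\|\cL_\delta^N-\cL_0^N\|_2\le N M^{N-1}C\delta$, with $M=\sup_\delta\|\cL_\delta\|_2<\infty$ by \eqref{KF} and \eqref{eq: kernel estimate H1}. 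Since $\cL_\delta$ preserves $V_{L^2}$, for $\delta<\delta_2$ small we get $\|\cL_\delta^N|_{V_{L^2}}\|_2\le 1/2$, which gives exponential contraction with constants independent of $\delta$. Existence of $f_\delta$ then follows from the eigenvalue $1$ of the compact, integral-preserving $\cL_\delta$. Uniqueness follows because the difference of two normalised fixed points lies in $V_{L^2}$ and is contracted.

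For (iii), the contraction on $V_{L^2}$ gives the resolvent as a Neumann series, $(Id-\cL_0)^{-1}=\sum_{n\ge0}\cL_0^n$, convergent in $\mathcal{L}(V_{L^2})$ with norm at most $C/(1-e^{-\gamma})$. For the response I would use the identity
\[
(Id-\cL_0)(f_\delta-f_0)=\cL_\delta f_\delta-\cL_0 f_\delta=(\cL_\delta-\cL_0)f_0+(\cL_\delta-\cL_0)(f_\delta-f_0).
\]
Here $f_\delta-f_0\in V_{L^2}$, and the right-hand side also lies in $V_{L^2}$ by integral preservation. Applying the resolvent and dividing by $\delta$ gives
\[
\frac{f_\delta-f_0}{\delta}=(Id-\cL_0)^{-1}\frac{(\cL_\delta-\cL_0)f_0}{\delta}+(Id-\cL_0)^{-1}\frac{(\cL_\delta-\cL_0)(f_\delta-f_0)}{\delta}.
\]
By Lemma~\ref{lem:L_delta_cty}, the first term converges to $(Id-\cL_0)^{-1}\dot r^{f_0}$ in $L^2$. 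The second term is bounded by $\|(Id-\cL_0)^{-1}\|\,C\,\|f_\delta-f_0\|_2$.

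The main obstacle is therefore to show $\|f_\delta-f_0\|_2\to0$. I would obtain this from the same identity:
\[
\|f_\delta-f_0\|_2\le \|(Id-\cL_0)^{-1}\|\,C\delta\,(\|f_0\|_2+\|f_\delta-f_0\|_2).
\]
For $\delta$ small enough that $\|(Id-\cL_0)^{-1}\|C\delta<1/2$, the last term can be absorbed, giving $\|f_\delta-f_0\|_2=O(\delta)$. This yields the claimed limit. The only point to check carefully is that $f_\delta\in L^2$ exists a priori, which is supplied by (ii).
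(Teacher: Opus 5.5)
Your proposal is correct and follows the paper's proof in all essentials. Part (i) comes from Lemma~\ref{lem:simple_ev} and Proposition~\ref{prop:conv_eq}; for part (ii), your perturbative route is exactly the paper's $n_0$-step argument; and part (iii) uses the same Neumann series and the same splitting of $(f_\delta-f_0)/\delta$ into the $f_0$-term and a remainder.

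The one local difference is how you obtain $f_\delta\to f_0$. You absorb through the bounded resolvent of $\cL_0$ using $\|\cL_\delta-\cL_0\|_2\le C\delta$, which gives $\|f_\delta-f_0\|_2=O(\delta)$. The paper instead uses $\|\cL_\delta^{n_0}|_{V_{L^2}}\|_2\le \tfrac23$ to get $\|f_\delta-f_0\|_2\le 3\|(\cL_\delta^{n_0}-\cL_0^{n_0})f_0\|_2$. Both are valid. Yours is slightly more economical, since it needs no contraction of $\cL_\delta$ at that step and gives an explicit Lipschitz rate.
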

\begin{proof}
(i). The existence and uniqueness of a normalised invariant density
$f_0\in L^2(\T^d)$ for $\cL_0$, together with the normalisation $
\int_{\T^d} f_0\, dx=1,$ follow from Lemma \ref{lem:simple_ev}. Moreover,
Proposition \ref{prop:conv_eq} shows that $\cL_0$ is exponentially
contracting on $V_{L^2}$.

(ii). By Lemma \ref{lem:contraction}, each operator $
\cL_\delta:L^2(\T^d)\to L^2(\T^d) $ is compact. Since $\cL_\delta$ is a transfer operator, it preserves the
integral, which means $
\int_{\T^d}\cL_\delta g\,dx=\int_{\T^d}g\,dx,$ for any $g\in L^2(\T^d).$ This is equivalent to $\cL_\delta^*1=1$. Since $\cL_\delta^*$ is also compact and the non-zero spectra of $\cL_\delta$ and $\cL_\delta^*$ coincide, it follows
that $1$ is an eigenvalue of $\cL_\delta$. Hence $\cL_\delta$ admits a
nontrivial fixed point.

Next, by (i), there exists $n_0 \in \mathbb{N}$ such that $
\|\cL_0^{n_0}|_{V_{L^2}}\|_{L^2\to L^2}\leq \frac12.$ On the other hand, Lemma \ref{lem:L_delta_cty} implies that
$$
\|\cL_\delta-\cL_0\|_{L^2\to L^2}\to 0
\qquad\text{as }\delta\to0.
$$
Therefore
$$
\|\cL_\delta^{n_0}-\cL_0^{n_0}\|_{L^2\to L^2}\to 0,
\qquad\text{as }\delta\to0,
$$
and so there exists $\delta_2\in(0,\bar\delta]$ such that, for every
$\delta\in[0,\delta_2)$,
$$
\|\cL_\delta^{n_0}|_{V_{L^2}}\|_{L^2\to L^2}\leq \frac23.
$$
It follows that $\cL_\delta$ is exponentially contracting on $V_{L^2}$ for
every $\delta\in[0,\delta_2)$.

Fix now $\delta\in[0,\delta_2)$, and let $f_\delta$ be a non-trivial fixed
point of $\cL_\delta$. Since $\cL_\delta$ preserves the integral, we can
normalise $f_\delta$ so that $
\int_{\T^d}f_\delta\,dx=1. $ This normalised invariant density is unique. Indeed, if $f_\delta'$ is another
invariant density with the same normalization, then
$f_\delta-f_\delta'\in V_{L^2}$ and
\[
\cL_\delta^n(f_\delta-f_\delta')=f_\delta-f_\delta',
\qquad n\in\mathbb{N}.
\]
Since $\cL_\delta$ is exponentially contracting on $V_{L^2}$, we obtain
$f_\delta=f_\delta'$.

(iii) We first show that the resolvent $
(Id-\cL_0)^{-1}\colon V_{L^2}\to V_{L^2} $ is continuous. Let $g\in V_{L^2}$. Since $\cL_0$ is exponentially
contracting on $V_{L^2}$ by (i), the Neumann series converges in
$L^2(\T^d)$ and gives
\[
(Id-\cL_0)^{-1}g=\sum_{n=0}^\infty \cL_0^n g.
\]
Hence there exist constants $C_0,\lambda_0>0$ such that
\[
\|(Id-\cL_0)^{-1}g\|_2
\leq
\sum_{n=0}^\infty \|\cL_0^n g\|_2
\leq
C_0\sum_{n=0}^\infty e^{-\lambda_0 n}\|g\|_2
\leq C_1\|g\|_2
\]
for some $C_1>0$. Thus the resolvent is continuous on $V_{L^2}$.

We next show that $
f_\delta\to f_0$ in $L^2(\T^d).$ Let $
g_\delta:=f_\delta-f_0.$ Since both $f_\delta$ and $f_0$ have integral $1$, we have
$g_\delta\in V_{L^2}$. Using the invariance relations
\[
\cL_\delta f_\delta=f_\delta,
\qquad
\cL_0f_0=f_0,
\]
we obtain
\[
g_\delta
=
\cL_\delta^{n_0}f_\delta-\cL_0^{n_0}f_0
=
(\cL_\delta^{n_0}-\cL_0^{n_0})f_0+\cL_\delta^{n_0}g_\delta.
\]
Therefore
\[
\|g_\delta\|_2
\leq
\|(\cL_\delta^{n_0}-\cL_0^{n_0})f_0\|_2
+\frac23\|g_\delta\|_2,
\]
and hence
\[
\|f_\delta-f_0\|_2
=
\|g_\delta\|_2
\leq
3\|(\cL_\delta^{n_0}-\cL_0^{n_0})f_0\|_2.
\]
Since $\cL_\delta^{n_0}\to\cL_0^{n_0}$ in operator norm, we conclude that
$f_\delta\to f_0$ in $L^2(\T^d)$.

We now prove the linear response formula. Using again the invariance
relations,
\[
(Id-\cL_0)(f_\delta-f_0)=(\cL_\delta-\cL_0)f_\delta.
\]
Dividing by $\delta$, we obtain
\[
(Id-\cL_0)\frac{f_\delta-f_0}{\delta}
=
\frac{\cL_\delta-\cL_0}{\delta}f_\delta.
\]
Since $f_\delta-f_0\in V_{L^2}$ and both $\cL_\delta$ and $\cL_0$ preserve
the integral, the right-hand side also belongs to $V_{L^2}$. Therefore,
applying the bounded resolvent on $V_{L^2}$, we get
\[
\frac{f_\delta-f_0}{\delta}
=
(Id-\cL_0)^{-1}\frac{\cL_\delta-\cL_0}{\delta}f_\delta.
\]
We decompose the right-hand side as
\[
\frac{f_\delta-f_0}{\delta}
=
(Id-\cL_0)^{-1}\frac{\cL_\delta-\cL_0}{\delta}f_0
+
(Id-\cL_0)^{-1}\frac{\cL_\delta-\cL_0}{\delta}(f_\delta-f_0).
\]

By Lemma~\ref{lem:L_delta_cty},
\[
\frac{\cL_\delta-\cL_0}{\delta}f_0\to \dot r^{f_0}
\qquad\text{in }L^2(\T^d),
\]
and therefore
\[
(Id-\cL_0)^{-1}\frac{\cL_\delta-\cL_0}{\delta}f_0
\to
(Id-\cL_0)^{-1}\dot r^{f_0}
\qquad\text{in }L^2(\T^d).
\]

For the remainder term, Lemma~\ref{lem:L_delta_cty} yields a constant $C_2>0$
such that
\[
\left\|\frac{\cL_\delta-\cL_0}{\delta}\right\|_{L^2\to L^2}\leq C_2
\]
for all sufficiently small $\delta$. Hence
\[
\left\|
(Id-\cL_0)^{-1}\frac{\cL_\delta-\cL_0}{\delta}(f_\delta-f_0)
\right\|_2
\leq
\|(Id-\cL_0)^{-1}\|_{V_{L^2}\to V_{L^2}}
\left\|\frac{\cL_\delta-\cL_0}{\delta}\right\|_{L^2\to L^2}
\|f_\delta-f_0\|_2,
\]
and the right-hand side tends to \(0\) because \(f_\delta\to f_0\) in
\(L^2(\T^d)\).

Combining the previous limits, we conclude that
\[
\lim_{\delta\to0}
\left\|
\frac{f_\delta-f_0}{\delta}-(Id-\cL_0)^{-1}\dot r^{f_0}
\right\|_2=0.
\]
This concludes the proof.
\end{proof}

We now pass from the linear response of the invariant density to the linear response of expectations of observables. The argument is immediate once Proposition \ref{prop:linearresponse}(iii) is paired with the observable $\phi$.
\begin{proof}[Proof of Theorem \ref{thm: observables linear response}]
Let $\phi\in L^2(\T^d)$ and $\eta\in L^{\infty}(\T^d;\mathbb{R}^d)$. By Proposition \ref{prop:linearresponse}(iii)
$$
\frac{f_\delta-f_0}{\delta}
\to
(Id-\cL_0)^{-1}D_{f_0,1}^{L^{\infty}}(\eta) \qquad\text{in }L^2(\T^d).
$$
Testing against $\phi$ and using the $L^2$ duality, we obtain
$$
\lim_{\delta\to0}
\int_{\T^d} \phi \frac{f_\delta-f_0}{\delta}\,dx
=
\int_{\T^d} \phi \left[(Id-\cL_0)^{-1}D_{f_0,1}^{L^{\infty}}(\eta)\right]\, dx.
$$
This proves that the response
$$
R(\phi,\eta)
:=
\lim_{\delta\to 0}
\frac{\int_{\T^d}\phi f_{\delta}\,dx-\int_{\T^d}\phi f_0\,dx}{\delta}
$$
exists and is given by
$$
R(\phi,\eta)
=
\int_{\T^d}
\phi\,\left[(Id-\cL_0)^{-1}D_{f_0,1}^{L^{\infty}}(\eta)\right]dx.
$$
Moreover, Proposition \ref{cont} shows that $D_{f_0,1}^{L^{\infty}}$ is linear and continuous from $L^{\infty}(\T^d;\mathbb{R}^d)$ to $L^2(\T^d)$, and its values have zero average. Since $(Id-\cL_0)^{-1}\colon V_{L^2}\to V_{L^2}$ is continuous, it follows that
$$
|R(\phi,\eta)| \leq \|\phi\|_2 \|(Id- \cL_0)^{-1}\|_{V_{L^2}\to V_{L^2}} \|D_{f_0,1}^{L^{\infty}}(\eta)\|_2
\leq C\|\phi\|_2\|\eta\|_{\infty}
$$
for some constant $C>0$. Hence $\eta\mapsto R(\phi,\eta)$ is a continuous linear functional on $L^{\infty}(\T^d;\mathbb{R}^d)$.
\end{proof}

For the optimal response problem it is convenient to work on a Hilbert space of admissible perturbations rather than directly on $L^\infty$. We therefore consider a Hilbert space $\cH$ continuously embedded in $L^\infty(\T^d;\mathbb{R}^d)$. The next lemma shows that the response functional extends naturally to this setting: the extension is well defined, linear, and continuous, and it agrees with the original response functional whenever both are defined.

\begin{lemma}[Continuous extension of the response functional to $\cH$]
    Assume $b, \eta \in L^\infty (\T^d; \mathbb{R}^d)$. Let $\cH$ be any Hilbert space such that $\cH \hookrightarrow L^\infty (\T^d; \mathbb{R}^d).$ For $\phi \in L^2(\T^d)$ and $\eta \in \cH$, define
    \begin{equation}
 R_{\cH} (\phi, \eta) := \int_{\T^d} \phi \left[ (Id - \cL_0)^{-1} D_{f_0, 1}^\cH (\eta) \right] \, dx.
        \label{eq: response on cV}
    \end{equation}
    Then $R_{\cH}(\phi, \cdot) : \cH \to \mathbb{R}$ is linear and continuous. Moreover
    $$
R_{\cH}(\phi,\eta)=R(\phi,\eta),
$$
for any $\phi \in L^2(\T^d)$ and $\eta \in \cH \cap L^\infty (\T^d, \mathbb{R}^d).
    $
    \label{lemma: response continuous on cV}
\end{lemma}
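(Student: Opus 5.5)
The argument is a composition of bounded linear maps. By Corollary~\ref{cor: extension continuity derivative in cV}, applied with $p_0=f_0$ and $\tau=1$ (Proposition~\ref{cont} is stated for $\tau\in(0,T)$, so we take $T>1$), the map $D_{f_0,1}^{\cH}=D^{L^\infty}_{f_0,1}\circ\iota_\cH\colon \cH\to L^2(\T^d)$ is linear and continuous. Its values have zero average, so it maps into $V_{L^2}$, which is a closed subspace of $L^2(\T^d)$.

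Next, by Proposition~\ref{prop:linearresponse}(iii), the resolvent $(Id-\cL_0)^{-1}\colon V_{L^2}\to V_{L^2}$ is bounded. It is given by the convergent Neumann series, and in particular it is linear. Finally, for fixed $\phi\in L^2(\T^d)$, the map $g\mapsto\int_{\T^d}\phi g\,dx$ is a bounded linear functional on $L^2(\T^d)$ by Cauchy--Schwarz.

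Composing these three maps shows that $R_\cH(\phi,\cdot)$ is well defined, linear, and continuous. Explicitly, we get
\[
|R_\cH(\phi,\eta)|\le \|\phi\|_2\,\|(Id-\cL_0)^{-1}\|_{V_{L^2}\to V_{L^2}}\,\|D^{L^\infty}_{f_0,1}\|_{L^\infty\to L^2}\,\|\iota_\cH\|\,\|\eta\|_\cH .
\]

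For the consistency claim, take $\eta\in\cH$. Since $\iota_\cH$ identifies $\eta$ with an element of $L^\infty(\T^d;\mathbb{R}^d)$, we have $D^\cH_{f_0,1}(\eta)=D^{L^\infty}_{f_0,1}(\iota_\cH\eta)$ by definition. Theorem~\ref{thm: observables linear response} then gives
\[
R(\phi,\eta)=\int_{\T^d}\phi\,(Id-\cL_0)^{-1}D^{L^\infty}_{f_0,1}(\eta)\,dx,
\]
which coincides with \eqref{eq: response on cV}.

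There is no real obstacle here. The only point that needs care is checking that the range of $D_{f_0,1}^\cH$ lies in $V_{L^2}$, so that the resolvent can be applied; this is exactly the zero-average statement in Corollary~\ref{cor: extension continuity derivative in cV}.
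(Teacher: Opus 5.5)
Your proof is correct and follows essentially the same route as the paper: continuity of $D^{\cH}_{f_0,1}$ with values in $V_{L^2}$ from Corollary~\ref{cor: extension continuity derivative in cV}, boundedness of the resolvent on $V_{L^2}$, and Cauchy--Schwarz, followed by the identification $D^{\cH}_{f_0,1}(\eta)=D^{L^\infty}_{f_0,1}(\eta)$ for the consistency claim. The only cosmetic difference is that you cite Theorem~\ref{thm: observables linear response} for the formula for $R(\phi,\eta)$. The paper instead cites Proposition~\ref{prop:linearresponse}(iii) directly, which is the result that theorem is derived from.
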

\begin{proof}
    By Corollary \ref{cor: extension continuity derivative in cV}, the map $D^{\cH}_{f_0, 1}: \cH \to L^2(\T^d)$ is linear and continuous. By Corollary \ref{cor: extension continuity derivative in cV}, for every $\eta \in \cH$, $D_{f_0,1}^{\cH}(\eta)$ has zero average, hence $D_{f_0,1}^\cH (\eta ) \in V_{L^2}.$ Therefore the resolvent operator
    $$
    (Id- \mathcal{L}_0)^{-1} : V_{L^2} \to V_{L^2}
    $$
    can be applied to $D_{f_0,1}^\cH (\eta )$. Using Cauchy-Schwarz and the boundedness of the resolvent on $V_{L^2}$, we get
    $$
    \vert R_{\cH} (\phi , \eta ) \vert \leq \| \phi \|_2  \| (Id- \cL_0)^{-1} D_{f_0, 1}^\cH (\eta) \|_2 \leq \| \phi \|_2 \| (Id- \cL_0)^{-1} \|_{V_{L^2} \to V_{L^2}} \| D_{f_0,1}^{\cH} (\eta) \|_2  \leq C \| \phi \|_2 \| \eta \|_{\cH},
    $$
    for some constant $C>0$. This proves the continuity of $R_{\cH}(\phi, \cdot)$. Finally, for $\eta \in \cH \cap L^\infty (\T^d; \mathbb{R}^d)$ Proposition \ref{prop:linearresponse}(iii) gives
    $$
    R(\phi, \eta) = \int_{\T^d} \phi \left[ (Id- \cL_0)^{-1} D_{f_0, 1}^{L^\infty}(\eta) \right] \, dx.
    $$
    Hence, $R$ coincides with the functional $R_\cH$ defined in \eqref{eq: response on cV} since $ D_{f_0, 1}^{L^\infty}(\eta) = D_{f_0,1}^{\cH} (\eta)$ for any $\eta \in \cH \cap L^\infty (\T^d; \mathbb{R}^d).$
\end{proof}

\section{Optimal response}
\label{sec: optimal response}

\subsection{Preliminaries on convex optimization}

Here we recall some general results on convex optimization on Hilbert spaces we will use in the following. For details and the proofs of the results stated in this form we refer to \cite{AFG22}.

Let $\cH$ be a Hilbert space continuously embedded in $L^\infty (\T^d;\mathbb{R}^d)$. Denote by \(P \subset \cH\) be a bounded and convex subset of $\cH$.
\begin{definition}
\label{stconv}We say that a convex closed set $P\subseteq \cH$ is \emph{strictly convex} if for each pair $x,y\in P$, with $x \neq y$, and for all $\gamma\in(0,1)$, the points $\gamma x+(1-\gamma)y\in \mathrm{int}(P)$, where \(\mathrm{int}(P)\) is the relative interior\footnote{The relative interior of a closed convex set $C$ is the interior of $C$ relative to the closed affine hull of $C$.} of \(P\).
\end{definition}

We now recall the abstract convex optimisation facts that will be used to prove existence and uniqueness of optimal perturbations. The point is that, once the response is viewed as a continuous linear functional on a Hilbert space, the geometry of the admissible set $P$ determines whether the maximisation problem admits a solution and whether that solution is unique. Suppose in addition that $\cH$ is separable and let $\mathcal{J}:\cH\to\mathbb{R}$ be a continuous linear functional. We consider the abstract problem of finding $p_*\in P$ such that
\begin{equation}
\mathcal{J}(p_*)=\max_{p\in P}\mathcal{J}(p) .  \label{gen-func-opt-prob}
\end{equation}%

The existence and uniqueness of an optimal perturbation follows from properties of $P$ as stated in the following two propositions.
(See \cite{AFG22}  Proposition 4.1 and 4.3 )
\begin{proposition}[Existence of the optimal solution]
\label{prop:exist} Let $P$ be bounded, convex, and closed in $\cH$.
Then problem~\eqref{gen-func-opt-prob} has at least one solution.
\end{proposition}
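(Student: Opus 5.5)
The plan is to use weak compactness in the Hilbert space $\cH$ together with the weak continuity of the continuous linear functional $\mathcal{J}$. First, since $P$ is bounded, $M:=\sup_{p\in P}\mathcal{J}(p)$ is finite: continuity of $\mathcal{J}$ gives $|\mathcal{J}(p)|\le \|\mathcal{J}\|_{\cH^*}\|p\|_{\cH}\le \|\mathcal{J}\|_{\cH^*}\sup_{q\in P}\|q\|_{\cH}$. I will then pick a maximising sequence $(p_n)_n\subset P$ with $\mathcal{J}(p_n)\to M$.

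Second, the sequence $(p_n)$ is bounded in the Hilbert space $\cH$. Hilbert spaces are reflexive, and $\cH$ is moreover separable, so by the Banach--Alaoglu theorem (sequential version for reflexive or separable spaces) there is a subsequence, still denoted $(p_n)$, and some $p_*\in\cH$ such that $p_n\rightharpoonup p_*$ weakly in $\cH$. An elementary alternative is to expand in an orthonormal basis and use a diagonal argument on the coefficients.

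Third, I must show $p_*\in P$. This is the only point that needs care, since closed sets are in general not weakly closed. Here convexity is essential. By Mazur's lemma, or equivalently by the Hahn--Banach separation theorem, a convex set that is norm-closed is also weakly closed. Concretely, if $p_*\notin P$, one separates $p_*$ strictly from the closed convex set $P$ by some $h\in\cH$ with $\langle h,p_*\rangle_{\cH}>\sup_{p\in P}\langle h,p\rangle_{\cH}$. Testing the weak convergence against $h$ then gives a contradiction. Hence $p_*\in P$.

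Finally, $\mathcal{J}$ is a continuous linear functional, so by the Riesz representation $\mathcal{J}=\langle v,\cdot\rangle_{\cH}$ it is weakly continuous. Therefore $\mathcal{J}(p_*)=\lim_n\mathcal{J}(p_n)=M$, which means $p_*$ solves \eqref{gen-func-opt-prob}. I expect the weak closedness of $P$ (the third step) to be the main obstacle, and I would handle it via Mazur's lemma exactly as described.
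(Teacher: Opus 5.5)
Your proof is correct and is essentially the standard argument the paper relies on. The paper gives no proof of its own but defers to \cite{AFG22}, where existence follows from the same ingredients you use: bounded, closed, convex subsets of a Hilbert space are weakly sequentially compact (Banach--Alaoglu plus Mazur), the continuous linear functional $\mathcal{J}$ is weakly continuous, and these are applied to a maximising sequence, tacitly assuming $P\neq\emptyset$ as the statement presupposes.
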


Upgrading convexity of the feasible set $P$ to strict convexity provides
uniqueness of the optimal solution.

\begin{proposition}[Uniqueness of the optimal solution]
\label{prop:uniqe} Suppose $P$ is closed, bounded, and strictly convex
subset of $\cH$, and that $P$ contains the zero vector in its
relative interior. If $\mathcal{J}$ is not uniformly vanishing on $P$
then the optimal solution to \eqref{gen-func-opt-prob} is unique.
\end{proposition}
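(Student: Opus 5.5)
The plan is to argue by contradiction, using only the geometry of $P$ and the linearity of $\mathcal{J}$. Existence of at least one maximiser is already given by Proposition~\ref{prop:exist}, so the task reduces to showing that two distinct maximisers cannot coexist. Throughout I write $M:=\max_{p\in P}\mathcal{J}(p)$.

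\emph{Step 1: $M>0$.} By hypothesis there is $q\in P$ with $\mathcal{J}(q)\neq 0$. Since $0$ lies in the relative interior of $P$, the point $-\varepsilon q$ belongs to $P$ for $\varepsilon>0$ small enough. This holds because $q$ and $-q$ lie in the linear span of $P$, which is the affine hull, since $0\in P$. Linearity gives $\mathcal{J}(-\varepsilon q)=-\varepsilon\mathcal{J}(q)$, so one of $\mathcal{J}(q)$ and $-\varepsilon\mathcal{J}(q)$ is strictly positive. Hence $M>0$.

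\emph{Step 2: maximisers are not relative interior points.} Let $p_*$ be a maximiser and suppose $p_*\in\mathrm{int}(P)$, the relative interior. Then $(1+\varepsilon)p_*\in P$ for small $\varepsilon>0$. This uses that $p_*$ lies in the affine hull, a linear subspace because it contains $0$, so $(1+\varepsilon)p_*$ stays in it and is close to $p_*$. But $\mathcal{J}((1+\varepsilon)p_*)=(1+\varepsilon)M>M$, which contradicts maximality. Therefore every maximiser lies on the relative boundary of $P$.

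\emph{Step 3: uniqueness.} Suppose $p_1\neq p_2$ are both maximisers. By convexity and linearity, $m:=\tfrac12(p_1+p_2)\in P$ and $\mathcal{J}(m)=M$, so $m$ is also a maximiser. Strict convexity (Definition~\ref{stconv}) places $m$ in $\mathrm{int}(P)$, which contradicts Step 2. Hence the maximiser is unique.

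The only delicate point is in Steps 1 and 2: one must check that scaling by $1+\varepsilon$, or passing to $-\varepsilon q$, keeps the point inside the closed affine hull. The relative interior is only defined relative to that hull, so this is what makes the small perturbations admissible. It works because the hull contains $0$ and is therefore a closed linear subspace.
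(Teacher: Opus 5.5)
Your proof is correct and follows essentially the same route as the argument the paper relies on (the paper gives no proof of its own and defers to \cite{AFG22}, Proposition 4.3). In that argument, the hypothesis $0\in\mathrm{int}(P)$ forces the maximum to be strictly positive. A positive maximum cannot be attained at a relative interior point, since $(1+\varepsilon)p_*$ would remain in $P$. Strict convexity would place the midpoint of two distinct maximisers in the relative interior, which gives the contradiction. Your observation that the closed affine hull is a closed linear subspace (because $0\in P$) is exactly what makes the scalings in Steps 1 and 2 admissible.
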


Note that in the case when $\mathcal{J}$ is uniformly vanishing, all the elements of $P$ are solutions of the problem $(\ref{gen-func-opt-prob}).$ 

We next prove Theorem \ref{thm:gen-cnvx}, which establishes existence and uniqueness of an optimal perturbation in the general convex setting announced in Section \ref{sec:main_results}. The key point is that, once the response functional is known to be linear and continuous on $\cH$, the optimal response problem fits directly into the abstract convex optimisation framework recalled above.
\begin{proof}[Proof of Theorem \ref{thm:gen-cnvx}]
Note that $R_{\cH}(\phi,\cdot)$ is continuous by Lemma \ref{lemma: response continuous on cV}. Since $R_{\cH}(\phi,\cdot)$ is not identically zero on $P$ by assumption, Proposition \ref{prop:exist} and Proposition \ref{prop:uniqe} can be applied, and thus we obtain the statement.
\end{proof}

{We now turn to the case where the admissible set $P$ is the closed unit ball of $\cH$, namely
\begin{equation*}
R_{\cH}(\phi,\eta_{\mathrm{opt}}) = \max_{\eta\in P} R_{\cH}(\phi,\eta), \qquad \text{where}\qquad
P := \{\eta \in \cH : \|\eta\|_{\cH} \leq 1 \}.
\end{equation*}
We now prove Theorem \ref{gen:algo}, which provides an explicit characterisation of the optimiser \(\eta_{\mathrm{opt}}\) in this setting. The proof combines the continuity of the response functional $R_{\cH}(\phi,\cdot)$ with the Riesz representation theorem. This characterisation will also be the starting point for the numerical approximation developed later, since it allows to approximate the optimal perturbation through its coefficients with respect to a chosen orthonormal basis of $\cH$.
\begin{proof}[Proof of Theorem \ref{gen:algo}]
By Lemma~\ref{lemma: response continuous on cV}, the map \(R_{\cH}(\phi,\cdot):\cH\to\mathbb{R}\) is linear and continuous. Hence, by the Riesz Representation Theorem, there exists a unique \(v\in \cH\) such that
$$
R_{\cH}(\phi,w)=\langle w,v\rangle_{\cH} \qquad\text{for all } w\in \cH.
$$
Therefore, for every $w\in \cH$ with $\|w\|_{\cH} \leq 1 $,
$$
R_{\cH}(\phi,w)=\langle w,v\rangle_{\cH} \leq \|w\|_{\cH}\|v\|_{\cH}\leq \|v\|_{\cH},
$$
and equality is attained at $w=v/\|v\|_{\cH}$. Hence the maximum in \eqref{e:define optr} is realized at
$$
\eta_{\mathrm{opt}}=\frac{v}{\|v\|_{\cH}}.
$$
Uniqueness also follows from Theorem \ref{thm:gen-cnvx}, since the unit ball of a Hilbert space is strictly convex and \(R_{\cH}(\phi,\cdot)\) is not identically zero. Finally,
$$
R_{\cH}(\phi,b_i)=\langle b_i,v\rangle_{\cH},
$$
which characterizes the coefficients of $v$ in the basis $\{b_i\}_{i\in\mathbb{N}} $.
\end{proof}

We remark that the last proposition shows that the optimal perturbation can be approximated by computing the linear response $ R(\phi,b_i)$ of the expectation of $\phi$ on the elements of the basis. 

This approach was also used in \cite{GN25} and \cite{DGJ} and is related to the method used in \cite{FG23} where the Fourier approximation is used together with Lagrange multipliers to compute the optimal perturbation for  expanding maps of the circle.

In the following subsection, we see how to choose a suitable orthonormal basis to compute $v$ by the above strategy.
}

\section{Numerical examples}
\label{sec: numerical examples}

In this section we illustrate the optimal-response framework on several examples. For each system, once the observable $\phi$ and the admissible perturbation space $\cH$ are fixed, we approximate the response functional in a truncated Fourier basis, reconstruct its Riesz representative $v$, and then compute the corresponding optimal perturbation. A practical ingredient in this procedure is a systematic enumeration of the multi-indices $\vec n$ appearing in the Fourier expansion. We use an ordering strategy that works in arbitrary dimension $d$, following \cite{GN25}.

\begin{remark}
Throughout this section, the observable $\phi$ is fixed in each example.
Hence, for $\eta\in\mathcal H$, we write
$
R(\eta): = R( \phi , \eta).$ Moreover, by Lemma \ref{lemma: response continuous on cV}, the abstract functional
$R_{\mathcal H}(\phi,\cdot)$ agrees with $R(\phi,\cdot)$ on $\mathcal H$.
Therefore, in the numerical examples we simply write $R(\eta)$.
If $v\in\mathcal H$ denotes the Riesz representative of $R(\phi,\cdot)$, then
$$
R(\eta)=\langle v,\eta\rangle_{\mathcal H}
\qquad \text{for all }\eta\in\mathcal H.
$$
\label{rem: optimal response numerical example}
\end{remark}

We begin by describing the Fourier representation of $v$ on $\T^d$ and the associated enumeration of the multi-indices used in the computations.

\subsection{Fourier representation of the Riesz representative on $\T^d$}
\hfill\vspace{0.1in}
\label{s:fourier}

All the numerical examples considered below are posed on flat tori. In order to remain consistent with the notation of the previous sections, we write $\T^d$ for the $d$-dimensional torus. On $\T^d$, we use a natural orthogonal basis of $H^p(\T^d;\mathbb{R}^d)$, given by products of trigonometric functions. We view the $i$th coordinate of $\T^d$ as the interval
$[c_i-r_{box},\, c_i+r_{box}]$, where $r_{box}$ denotes the radius of the torus..
We adopt the multi-index notation
\[\vec{n}:=(n_1, \ldots n_d),\] 
and consider
\begin{equation}\begin{split}
\label{e:basis}
B^j_{\vec{n}}
=
e_j \prod_{i=1}^d b_{n_i}(x_i),\quad
1 \le j\le d ,\quad
n_i \ge 0,
\quad \textnormal{where} \quad \\
e_j = (0,0,\ldots,0,1,0,\ldots,0)\in \mathbb{R}^d;
\\
b_m(x_i) = 
\begin{cases}
  1 / \sqrt{r_{box}}, \quad m=0;
    \\
    \sqrt{\frac{2}{r_{box}}} \sin(\lfloor \frac{m+1}{2} \rfloor \frac{\pi}{r_{box}} (x_i-c_i)) \quad &m \textnormal{ odd}
    ;
    \\
    \sqrt{\frac{2}{r_{box}}} \cos(\lfloor \frac{m+1}{2} \rfloor \frac{\pi}{r_{box}} (x_i - c_i)) \quad &m \textnormal{ even}.
\end{cases}
\end{split}\end{equation}
Here, $\lfloor\cdot\rfloor$ denotes the floor function. Note that $\int b_m^2\,dx = 1$.
We define the normalized basis functions by
\[
\tilde B^j_{\vec{n}}
:=
\frac{B^j_{\vec{n}}}{||B^j_{\vec{n}}||_{H^p}}.
\]

Here the $H^p$ norm has the following expression (note that it in fact does not depend on $j$),  
\begin{equation}\begin{split}
\label{e:Hpnorm of Fourier}
\langle B^j_{\vec{n}}, B^j_{\vec{n}}\rangle_{H^p}
=
\sum_{l=0}^p C_l \sum_{\vec{k}=(1,\ldots,1)\in \mathbb{R}^l }^{(d,\ldots,d)} 
\int \left(\partial_{\vec{k}}
\prod_{i=1}^d b_{n_i}(x_i)\right)^2 dx   
\\
=
\sum_{l=0}^p C_l \sum_{\vec{k}=(1,\ldots,1)\in \mathbb{R}^l }^{(d,\ldots,d)} 
(\lfloor \frac{n_{k_1}+1}{2} \rfloor 2 \pi)^2
\ldots
(\lfloor \frac{n_{k_l}+1}{2} \rfloor 2 \pi)^2.
\end{split}\end{equation}
Note that if $m=0$ then $\lfloor \frac{m+1}{2} \rfloor=0$, so the above formula still applies if one of the directions being differentiated is $b_0\equiv 1$.
In this section, the weight coefficients $C_l$ in the $H^p$ norm are chosen so that
\[\begin{split}
\langle B^j_{\vec{n}}, B^j_{\vec{n}}\rangle_{H^p}
=
\sum_{l=0}^p \sum_{\vec{k}=(1,\ldots,1)\in \mathbb{R}^l }^{(d,\ldots,d)} 
(\lfloor \frac{n_{k_1}+1}{2} \rfloor )^2
\ldots
(\lfloor \frac{n_{k_l}+1}{2} \rfloor )^2.
\end{split}\]
Let $v\in\mathcal H$ be the Riesz representative of the linear functional
$\eta\mapsto R(\eta)$ on $\mathcal H$, that is,
$$
R(\eta)=\langle v,\eta\rangle_{\mathcal H}
\qquad \text{for all }\eta\in\mathcal H.
$$
Then the Fourier expansion of $v$ is
\begin{equation} \begin{split}
\label{e:vFourier}
v = \sum_{j=1}^d \sum_{{\vec{n}} \ge 0 }
C^j_{\vec{n}} \tilde B^j_{\vec{n}},
\quad \textnormal{where} \quad
C^j_{\vec{n}} 
= \langle v, \tilde B^j_{\vec{n}}\rangle_{H^p}
= R(  \tilde B^j_{\vec{n}}).
\end{split} \end{equation}
Combining the theoretical result with the Fourier expression of $v$, we can see that the optimal perturbation achieving the optimal response is

\begin{equation} \begin{split} \label{e:Y}
\eta_{\mathrm{opt}} = \frac{v}{||v||_{H^p}}
= \sum_{j=1}^d \sum_{{\vec{n}} \ge 0 }
\frac{R( \tilde B^j_{\vec{n}})}{||v||_{H^p}} \tilde B^j_{\vec{n}},
\quad \textnormal{where} \quad
||v||_{H^p} = \left( \sum_{j=1}^d \sum_{{\vec{n}} \ge 0 }
(C^j_{\vec{n}})^2 \right)^{\frac12}.
\end{split} \end{equation}

\subsection{Two-dimensional Kuramoto system}
\label{s:2d}
\hfill\vspace{0.1in}

This subsection illustrates our algorithm on a two-dimensional Kuramoto system
on $\T^2=[0,2\pi]^2$. Kuramoto systems describe coupled oscillators and, depending on the parameters, can exhibit a wide range of behaviors, including synchronization and other emergent collective phenomena. They also appear in several applications and have connections with other high-dimensional dynamical models, including some recent viewpoints related to machine learning and
transformer-type architectures. In the present two-dimensional setting, the
drift, the response coefficients, and the optimal perturbation can all be
visualized explicitly, making this a convenient first benchmark for the method.

The base dynamical system is
\begin{equation}
\begin{split} \label{e:kuramoto}
  d x^i = F^i(x)\, dt + dW^i,
  \quad \textnormal{where} \quad 
  F^i(x):= \omega^i + \frac 1d \sum_{j=1}^d \sin (x^j-x^i),
  \quad \textnormal{and} \quad 
  \omega = [1,3].
\end{split}
\end{equation}
Here, the superscript $i\in \{1,\ldots, d\}$ labels the coordinates. 
The observable function is 
\[ \begin{split}
  \phi(x) := \frac 1d \sum_{i=1}^d \sin (x^i)
\end{split} \]
Figure \ref{f:orbit} shows a typical orbit of this nonlinear system of time length $1000$.

\begin{figure}[ht]
    \centering
    \includegraphics[width=0.45\linewidth]{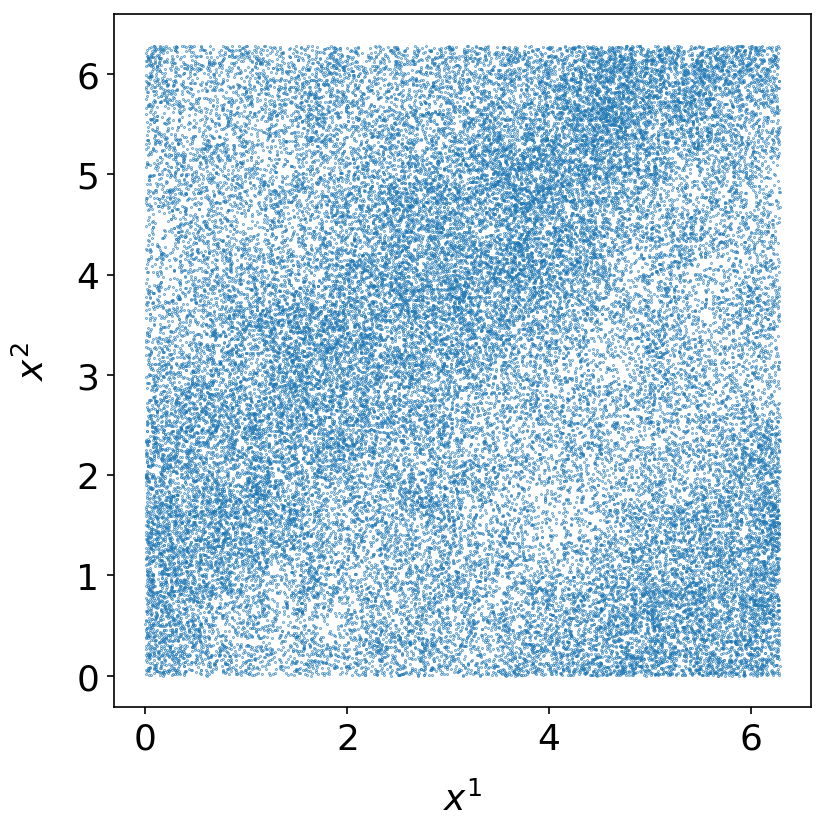}
    \caption{Typical orbit of the two-dimensional Kuramoto system \eqref{e:kuramoto}, shown as a scatter plot in the phase space $\T^2$.}
    \label{f:orbit}
\end{figure}

We define the perturbed dynamics as 
\begin{equation}
\label{e:f ga}
   F^\gamma:= F + \gamma \eta ,
   \quad \textnormal{where} \quad
   \gamma \ge 0.
\end{equation}
The space of feasible infinitesimal perturbations is
\[
P:=\{|| \eta ||_{H^5} \le 1\}\subset 
\cH:
= H^5(\T^2;\mathbb{R}^2) \subset C^{3}(\T^2 ; \mathbb{R}^2).
\]
Moreover, the Sobolev embedding theorem gives
$
\cH :=H^5(\T^2;\mathbb{R}^2)\hookrightarrow L^{\infty}(\T^2;\mathbb{R}^2)
$
continuously.

In the numerical computation, we consider only $N=11$ basis in each direction of $x$, that is, $ 0\le n_i \le N-1=10$ for each $n_i$ in $\vec{n}$.
The contour plot of the norms of all non-normalized basis are in Figure \ref{f:B2contour}.
With this, we can normalize all the basis elements to get $\tilde B^j_{\vec{n}}$.

\begin{figure}[ht]
    \centering
    \includegraphics[width=0.45\linewidth]{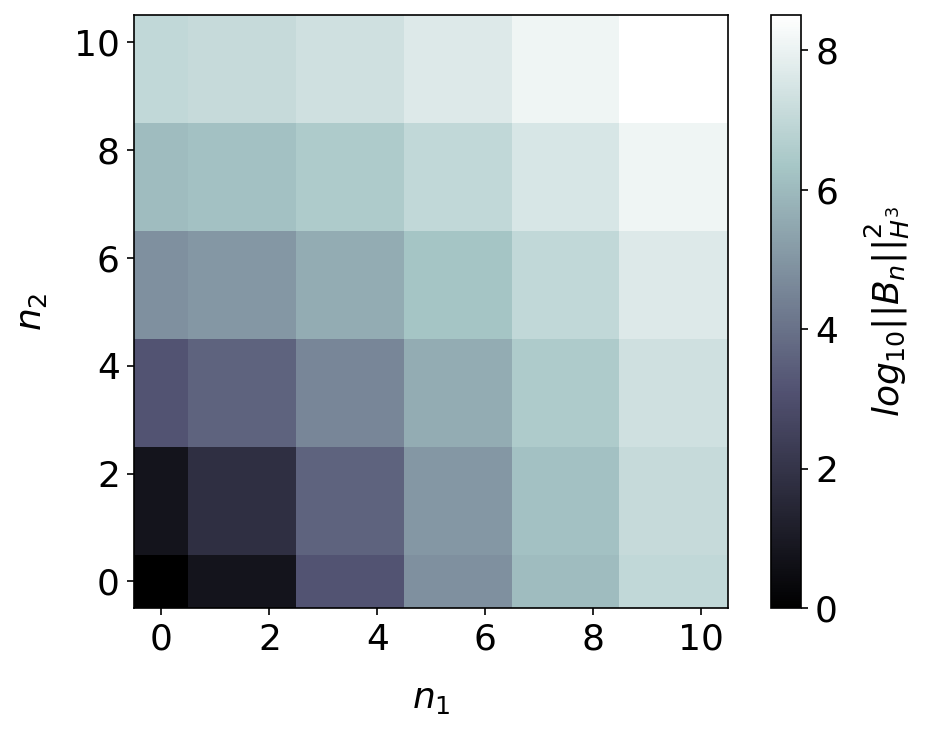}
    \caption{Squared $H^5$ norms of the non-normalized Fourier basis functions $B^j_{\vec n}$ for the two-dimensional Kuramoto example in \eqref{e:kuramoto}, shown in logarithmic scale. }
    \label{f:B2contour}
\end{figure}

We use the ergodic kernel-differentiation algorithm to compute the linear response of orthonormal basis, $R(\tilde B^j_{\vec{n}})$, which is also the coefficient for $v$, the $H^p$ representative of the linear response operator $R$.
The results are plotted in Figure \ref{f:RBcontour}.

In the ergodic kernel-differentiation algorithm, we set the total time $T=10^5$ and the decorrelation time $W = 4$.
The code is at \url{https://github.com/niangxiu/optrKD}.
On a 3GHz 8-core computer, the computation time for computing the linear responses of all $2\times11\times11=242$ basis is 164 seconds.

\begin{figure}[ht]
    \centering
    \includegraphics[width=0.45\linewidth]{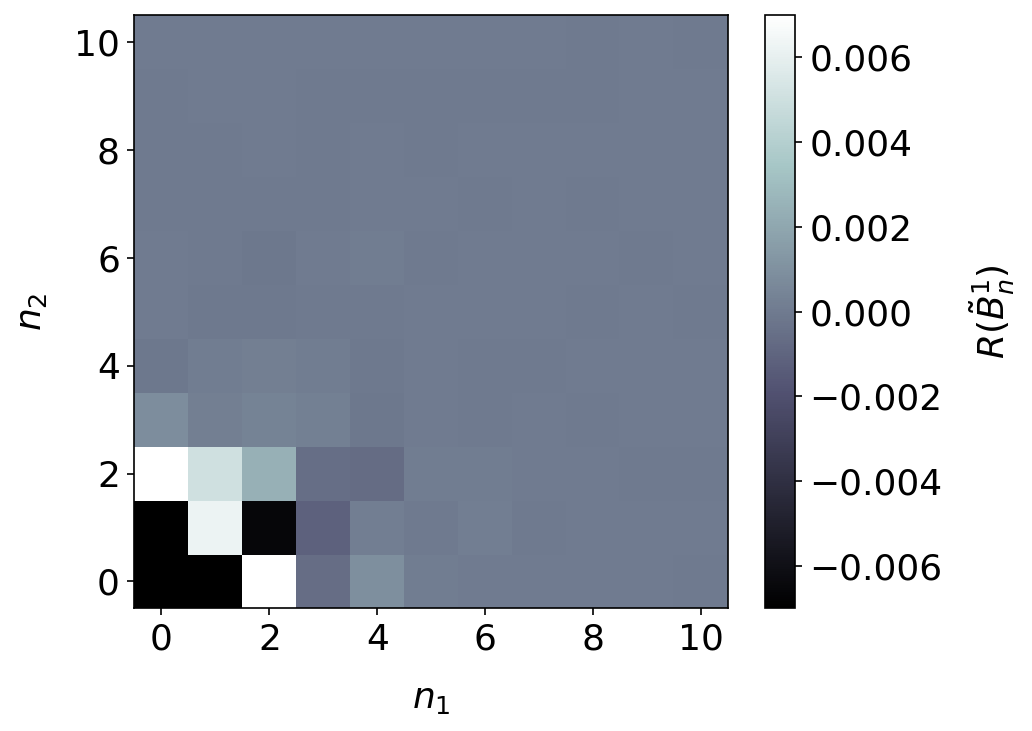}
    \includegraphics[width=0.45\linewidth]{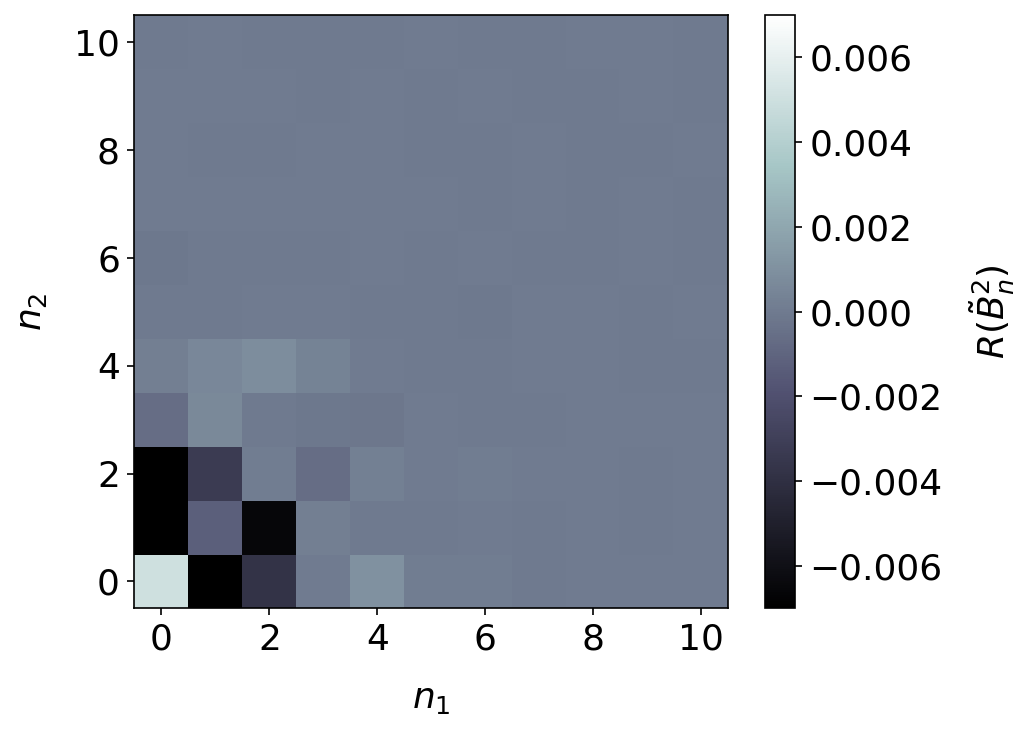}
    \caption{Computed response coefficients $C^j_{\vec n}=R(\tilde B^j_{\vec n})$ for the normalized Fourier basis functions in the two-dimensional Kuramoto example. Left: coefficients for the first component ($j=1$). Right: coefficients for the second component ($j=2$).}
    \label{f:RBcontour}
\end{figure}

With $R(\tilde B^j_{\vec{n}})$, we can compute $v$, then compute $\eta_{\mathrm{opt}}$, the optimal perturbation achieving the optimal response.
The vector field plot of $\eta_{\mathrm{opt}}$ and $F$ is in Figure \ref{f:Yvec}.
The first six Fourier coefficients of $\eta_{\mathrm{opt}}$, for the basis functions $\tilde B^1_{(0,0)},\ldots, \tilde B^1_{(0,5)}$, rounded off to two-digits, are:
\begin{equation*}
\begin{split}
[c_0,...,c_{5}]& =
[\texttt{-0.05, -0.09,  0.19, 0.01, -0.,    0. }].
\end{split}
\end{equation*}
The largest (in absolute value) Fourier coefficient is \texttt{-0.87}, attained for $\tilde B^1_{(1,0)}$, which generates the largest linear response among all the basis.

\begin{figure}[ht]
    \centering
\includegraphics[width=0.45\linewidth]{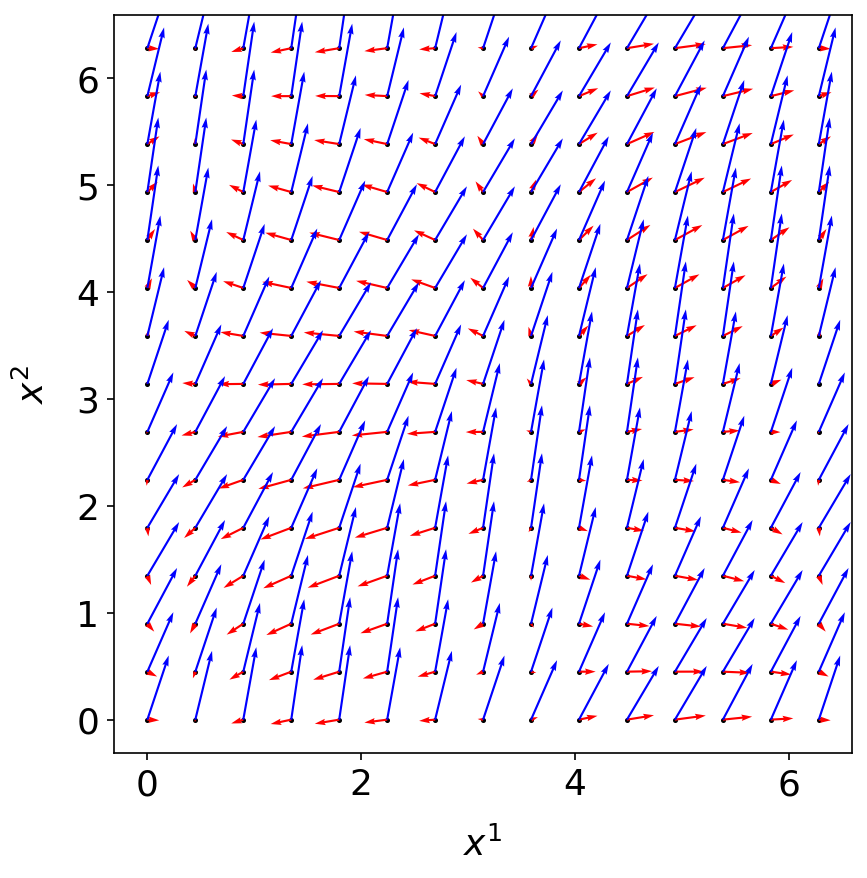}
    \caption{Vector field of the drift $F$ and of the optimal perturbation $\eta_{\mathrm{opt}}$ for the two-dimensional Kuramoto example. Blue arrows represent $\frac15 F$, and red arrows represent $\frac12 \eta_{\mathrm{opt}}$. The figure shows the spatial structure of the perturbation that maximizes the linear response under the $H^5$ constraint.}
    \label{f:Yvec}
\end{figure}

Figure \ref{f:prm obj} verifies that the optimal perturbation $\eta_{\mathrm{opt}}$ computed by our method indeed generates the optimal response.
The optimal response has an absolute value larger than the response of any individual basis function.
In particular, it is larger than the linear response of $\tilde B^1_{(1,0)}$, which, by Figure \ref{f:RBcontour}, generates the largest linear response among all the basis.
We also plot the linear response of $\tilde B^2_{(11,11)}$.
This figure also verifies that the linear response we compute is correct in reflecting the trend between $\gamma$ and $\mu^\gamma(\phi)$.
To do this, we compute $\mu^\gamma(\phi)$ for different values of $\gamma$ around 0.
The parameter-observable relation matches the linear response we computed.

\begin{figure}[ht]
    \centering
    \includegraphics[width=0.45\linewidth]{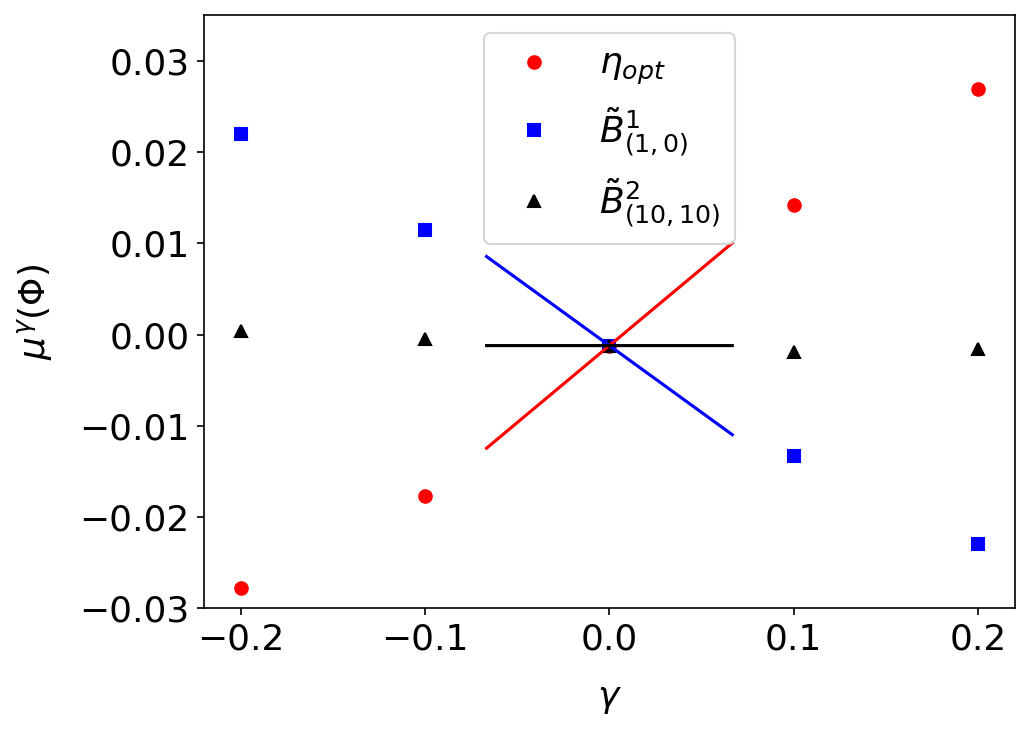}
    \caption{Computed linear response in the two-dimensional Kuramoto example. The short line segments show the linear responses at $\gamma=0$, while the dots show the values of $\mu^\gamma(\phi)$ for the perturbed systems $F+\gamma\eta_{\mathrm{opt}}$ (red circles), $F+\gamma \tilde B^1_{(1,0)}$ (blue squares), and $F+\gamma \tilde B^2_{(10,10)}$ (black triangles). All linear responses and $\mu^\gamma(\phi)$ are computed on orbits of same length.
    }
    \label{f:prm obj}
\end{figure}

\subsection{Twenty-dimensional Kuramoto system with a reduced perturbation space}
\hfill\vspace{0.1in}
\label{s:lowH}

This example shows that the method remains effective even in a high-dimensional phase space, provided that the admissible perturbations are restricted to a smaller space. We consider the same Kuramoto-type dynamics as in \eqref{e:kuramoto}, now on $\T^{20}$, with the same observable $\phi$. The perturbations are chosen to depend only on the first coordinate and to act only on the first two components. In this way, the dynamical system is twenty-dimensional, while the perturbation space $\cH$ is one-dimensional, so the number of basis elements remains small and computationally tractable.

We choose the frequencies in \eqref{e:kuramoto} to be 
\begin{equation*}\begin{split}
  [\omega^i ] = [1, 1.2, 1.4, \ldots, 4.8].
\end{split}\end{equation*}
The perturbed dynamics is
$$
F^\gamma := F + \gamma \eta, \qquad \eta \in  \cH,
$$
where
\[
\cH:=
\{\eta(x):\eta^1(x)=\eta^2(x)=g(x^1),\,
\eta^3(x)=\cdots=\eta^{20}(x)=0,\,
g\in H^4(\T)\}.
\]
We endow $\cH$ with the norm $\| \eta\|_{\cH} = \| g \|_{H^4 (\T)},$ so that $\cH$ is isomorphic to the Sobolev space $H^4(\T)$. The feasible set $P$ is the unit ball of $\cH$.

Since every $\eta \in \cH$ has the form $ \eta (x) = (g(x^1), g(x^1), 0, \dots, 0)$, we have
$$
\| \eta \|_{L^\infty (\T^{20})} \leq \sqrt{2} \| g \|_{L^\infty (\T)} \leq C \| g \|_{H^4(\T)} = C \| \eta \|_{\cH},
$$
where we have used the embedding $H^4(\T) \hookrightarrow L^\infty (\T).$ Hence $\cH \hookrightarrow L^\infty (\T^{20}, \mathbb{R}^{20})$, and thus $R_{\cH} (\eta) = R(\eta)$ for all $\eta \in \cH.$

The unnormalized Fourier basis for $\cH$ is
\[
B_n(x) = [b_n(x^1),b_n(x^1),0,\ldots,0] \,,
\]
where $b_n$ is the trigonometric function given in \eqref{e:basis}.
In numerical computations, we truncate the Fourier basis to
\[
0\le n\le N = 21.
\]
In the ergodic kernel-differentiation algorithm, we set $T=5\times 10^5$ and $W=6$.
The wall-clock time to compute the gradients is about 278 seconds.

\begin{figure}[ht]
\centering
\includegraphics[width=0.45\linewidth]{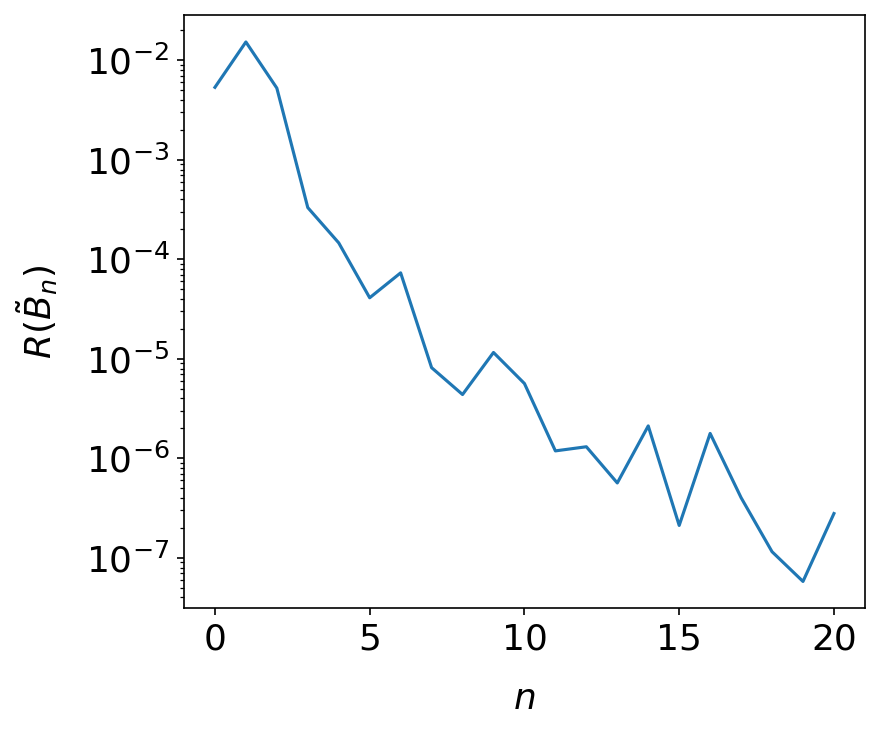}
\caption{Computed linear responses $R(\tilde B_n)$ for the normalized basis functions of the reduced perturbation space $\cH$ in the twenty-dimensional example. The rapid decay as $n$ increases indicates that the truncation error from the finite Fourier basis is small.}
\label{f:RB lowH}
\end{figure}

The linear response for each basis function is plotted in Figure \ref{f:RB lowH}, which decays very fast as $n$ increases, so the error caused by using a finite basis is also very small.
The first several Fourier coefficients of $\eta_{\mathrm{opt}}$ with respect to the basis $\tilde B_n$ are:
\begin{equation*}
\begin{split}
[c_0,...,c_{5}]& =
[\texttt{0.31, -0.9, 0.31, -0.02, 0.01, -0. }].
\end{split}
\end{equation*}

\begin{figure}[ht]
    \centering
\includegraphics[width=0.45\linewidth]{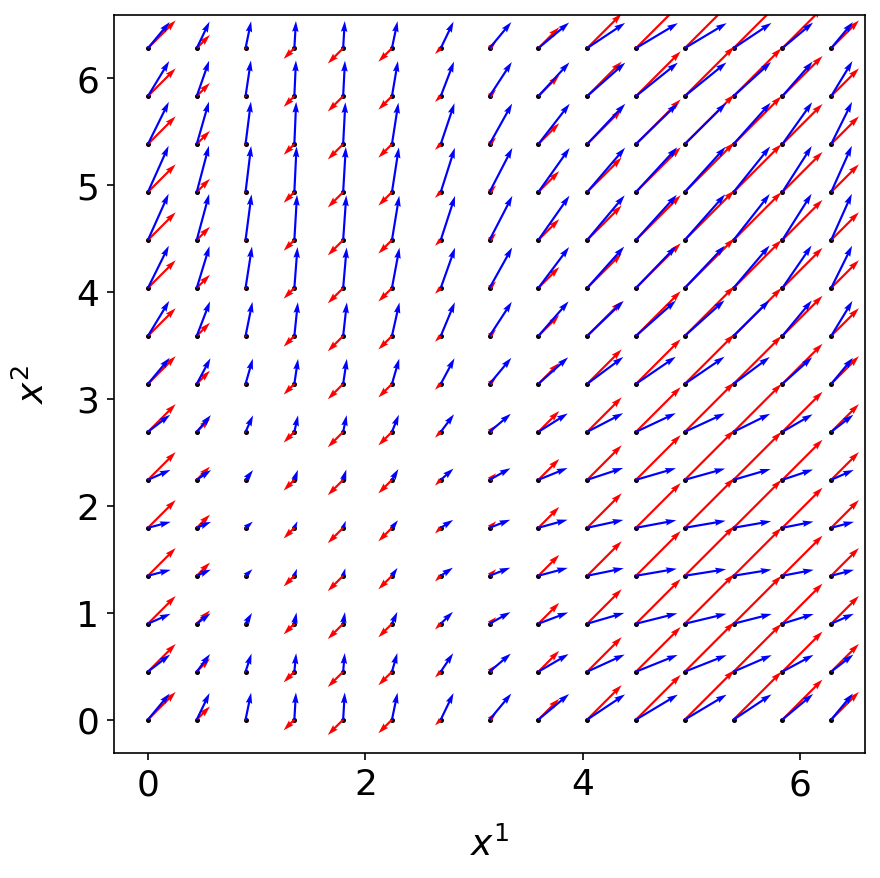}
\includegraphics[width=0.45\linewidth]{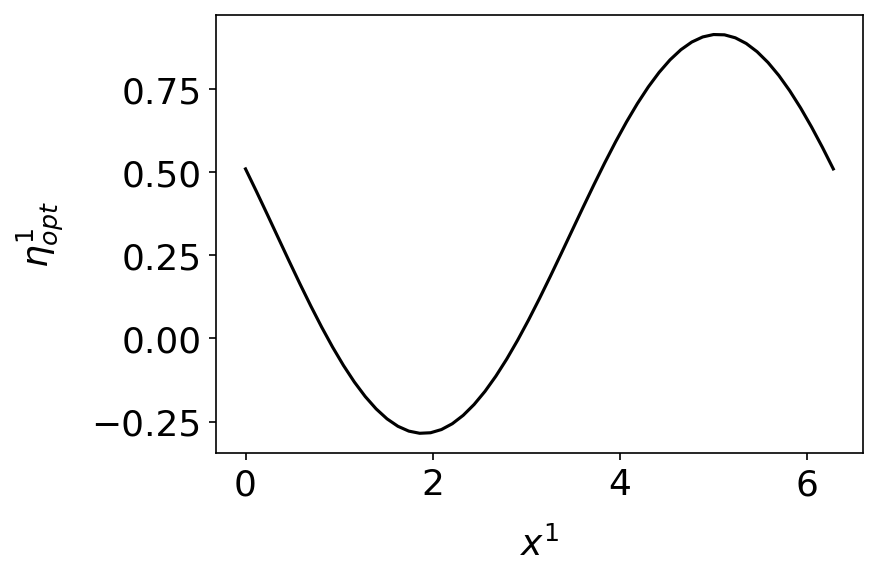}
    \caption{
      Optimal perturbation for the twenty-dimensional example with reduced perturbation space $\cH$. Left: vector field of the drift $\frac15 F$ (blue arrows) and of the optimal perturbation $\frac12 \eta_{\mathrm{opt}}$ (red arrows), projected onto the first two coordinates; all remaining components vanish. Right: graph of the first component $\eta^1_{\mathrm{opt}}(x^1)$.
    }
    \label{f:YveclowH}
\end{figure}

\begin{figure}
    \centering
\includegraphics[width=0.5\linewidth]{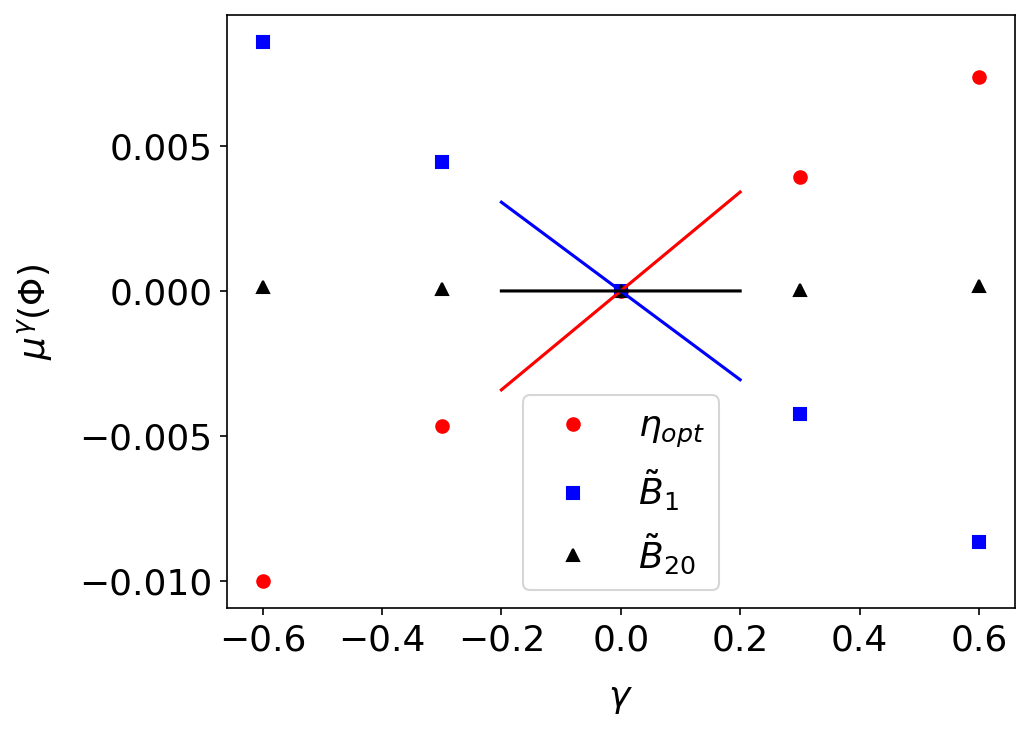}
    \caption{
    Linear response in the twenty-dimensional example. As in Figure \ref{f:prm obj}, the short line segments represent the linear responses at $\gamma=0$, while the dots show the averaged observable $\mu^\gamma(\phi)$ for several perturbations. The figure confirms that the optimal perturbation produces a larger linear response than any individual basis element.}
    \label{f:prm obj lowH}
\end{figure}

The vector field plot of the optimal perturbation $\eta_{\mathrm{opt}}$ is in Figure \ref{f:YveclowH}.
We can see that $\eta^1_{\mathrm{opt}}=\eta^2_{\mathrm{opt}}$ and it depends only on $x^1$; this is due to the selection of $\cH$.
Then we plot $\mu^\gamma(\phi)$ versus $\gamma$ and the linear responses at $\gamma=0$ computed for different perturbations in Figure \ref{f:prm obj lowH}.
Here, the largest linear response among all the basis elements is generated by $\tilde B_{1}$.
This shows that the optimal perturbation indeed generates a linear response larger than all elements in the basis.
This also verifies that the linear response we compute are correct.
Due to the relatively high dimension of $\T^{20}$, it would be difficult to compute the optimal perturbation for this example using a finite-element reduction of the transfer operator associated to the system.

\subsection{The three-dimensional Lorenz system} \label{sec:3d_lorenz}
\hfill\vspace{0.1in}

This final example tests the method on a three-dimensional system with richer dynamics. We consider a modified version of the classical Lorenz 63 system, adapted to the torus by multiplying the Lorenz vector field by a cutoff near the boundary. This preserves the characteristic Lorenz structure in the interior while making the drift continuous on $\T^3$, so that the optimal perturbation can be computed and visualised in the three-dimensional setting.

More specifically, let
\begin{equation*}\begin{split}
\ \T ^3 = [-r_{box}, r_{box}]^2 \times [0,2r_{box}],
\quad \textnormal{where} \quad 
r_{box} = 40.
\end{split}\end{equation*}
The base dynamical system is 
\begin{equation} \begin{split} \label{e:lorenz}
  d x^i = F^i(x)\,dt + 5 \,dW^i,
  \quad \textnormal{where} \quad 
  F(x) := b(|x-c_{enter}|_{\infty}) F'(x)
  \\
  F'(x) = \begin{bmatrix} 
    10 (x^2-x^1) \\
    x^1 (28-x^3) - x^2\\
    x^1 x^2- 8 x^3/3
  \end{bmatrix},
  \quad \textnormal{} 
  c_{enter} = \begin{bmatrix} 
    0 \\
    0 \\
    r_{box}
  \end{bmatrix},
  \\
  b(\rho) = 
  \begin{cases}
    1, \quad\text{if}\quad 0\le \rho \le r_{box} - r_{bezel} ,
    \\
    (r_{box} - \rho)/ r_{bezel}, \quad\text{if} \quad r_{box} - r_{bezel} < \rho \le r_{box}
  \end{cases},
  \quad \textnormal{} \quad 
  r_{bezel} = 2.
\end{split} \end{equation}
Here, the superscript $i\in \{1,2,3\}$ labels the coordinates;  $|\cdot|_{\infty}$ is the $l^\infty$ norm which takes the maximum of all components.
$b(\rho)$ is a piecewise linear cutoff function which decays from 1 to 0 in a thin bezel near the boundary; $r_{bezel}$ is the width of the bezel.
$F'$ is the original Lorenz 63 system, and its product with the cutoff function $F=bF'$ is Lipschitz continuous on $\T^3$.
In Figure \ref{f:Xopt 3d}, we can see that $F$ decays to zero at the boundary, so it is continuous throughout the boundary and hence continuous throughout $\T^3$.
The observable function is 
\[ \begin{split}
  \phi(x) := \sum_{i=1}^3 \sin \left(
  \frac{2\pi x^i}{2 r_{box}}
  \right).
\end{split} \]

The feasible set is the unit ball in $\cH = H^5(\T^3; \mathbb{R}^3)$, and 
$
\cH \hookrightarrow L^{\infty}(\T^3;\mathbb{R}^3)$ by the Sobolev embedding. Due to rapidly growing (with respect to $d$ and $N$) computational cost, we reduce the number of basis in each direction to
\[  
N = 9.
\]
So we now have $3\times 9^3=2187$ functions in the Fourier basis.
The wall-clock time to run the algorithm on a single core CPU is about 27 minutes.

\begin{figure}[ht]
    \centering
\includegraphics[width=0.49\linewidth]{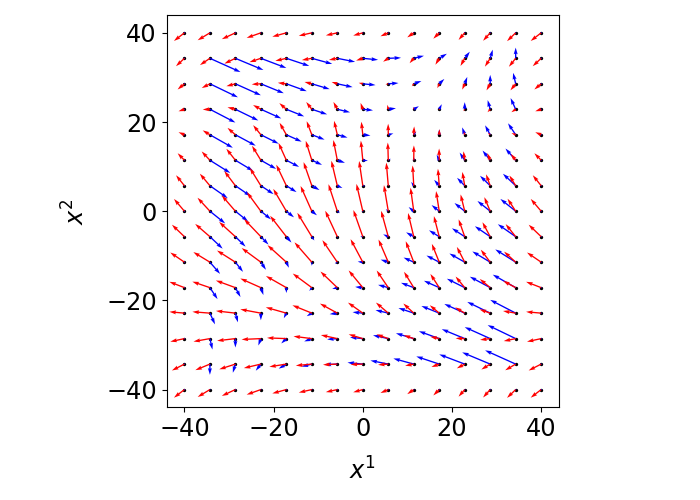}
\includegraphics[width=0.49\linewidth]{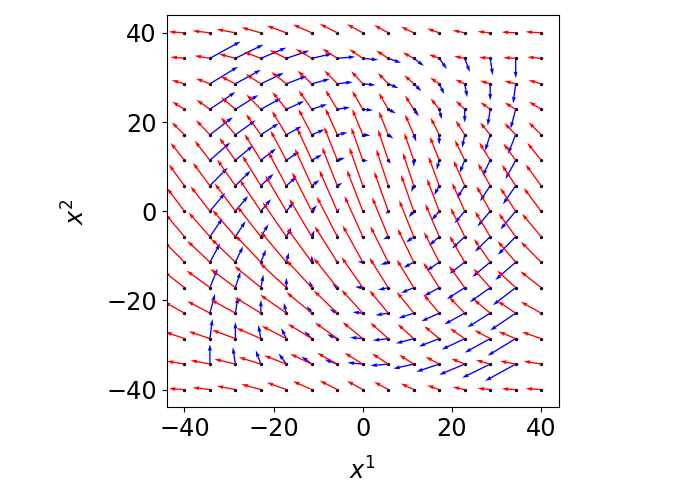}
\caption{Two-dimensional slices of the drift and of the optimal perturbation for the three-dimensional Lorenz example. Blue arrows represent $0.01[F^1,F^2]$, and red arrows represent $10[\eta^1_{\mathrm{opt}},\eta^2_{\mathrm{opt}}]$. Left: slice at $x^3=20$. Right: slice at $x^3=40$. 
    }
    \label{f:Xopt 3d}
\end{figure}

\begin{figure}[ht] \centering
  \includegraphics[width=0.5\textwidth]{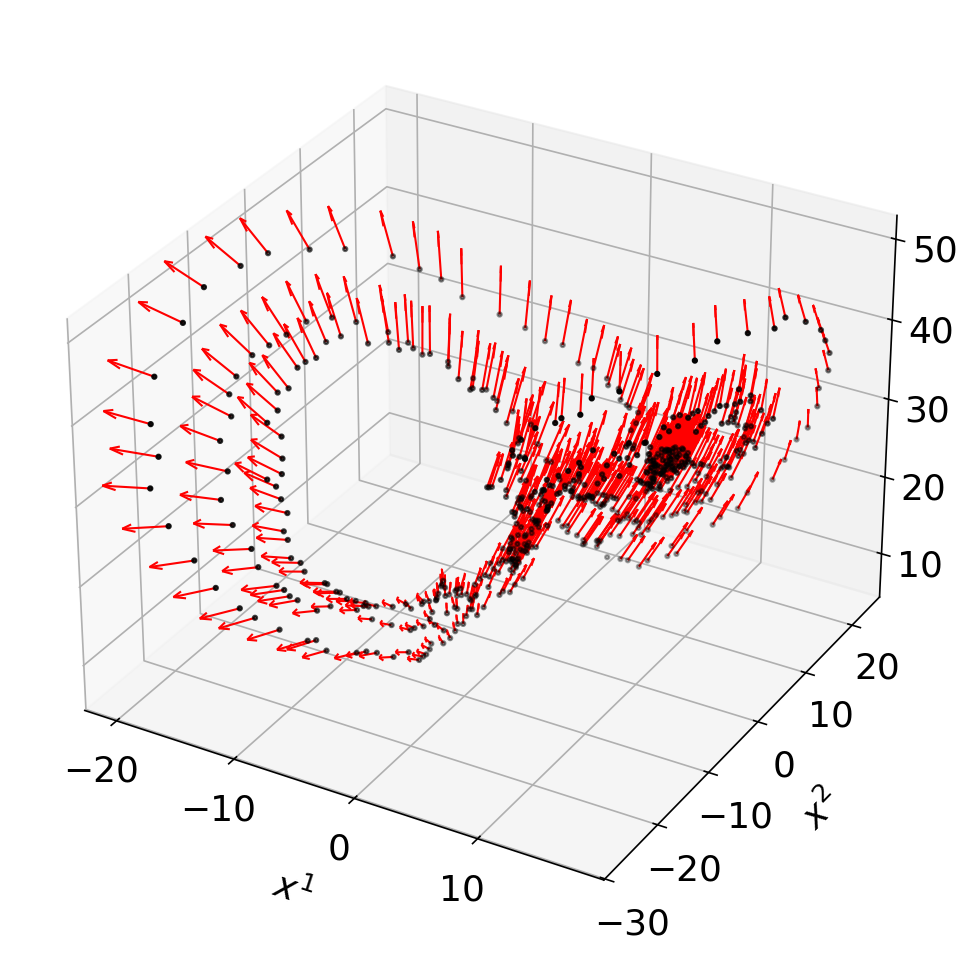}
  \caption{Three-dimensional visualization of the optimal perturbation for the Lorenz example. Red arrows represent $5\eta_{\mathrm{opt}}$, plotted along a typical orbit of the system shown in black.}
  \label{f:attractorY}
\end{figure}

We plot two-dimensional slices of $\eta_{\mathrm{opt}}$ in Figure \ref{f:Xopt 3d} and the three-dimensional plot on a typical orbit in Figure \ref{f:attractorY}.
The first six Fourier coefficients of $\eta _{\mathrm{opt}}$, relative to the basis functions $\tilde B^1_{(0,0,0)}, \ldots, \tilde B^1_{(0,0,5)}$, are:
\begin{equation*}
\begin{split}
[c_0,...,c_{5}]& =
[\texttt{-0.26, 0.09, 0.17, 0.,   -0.03, 0.}].
\end{split}
\end{equation*}
The largest (in absolute value) Fourier coefficient is attained at $\tilde B^2_{(0,2,0)}$, which generates the largest linear response among all the basis.

\begin{figure}
    \centering
    \includegraphics[width=0.5\linewidth]{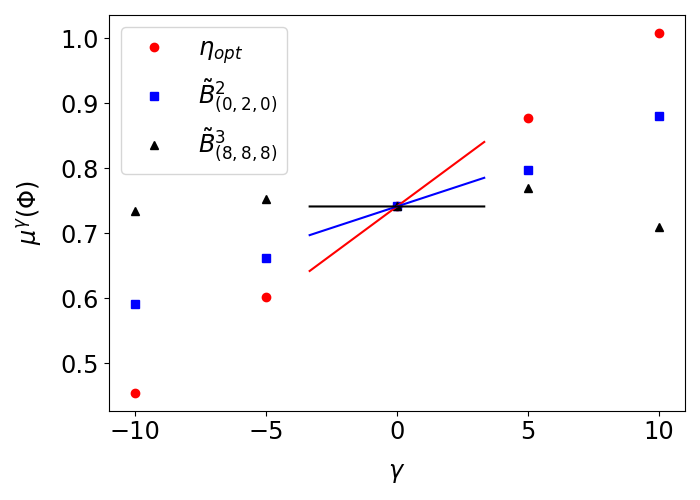}
    \caption{Computed linear response for the three-dimensional Lorenz example. The figure compares the linear responses at $\gamma=0$ with the values of the averaged observable $\mu^\gamma(\phi)$ for several perturbations, showing that the optimal perturbation produces the largest response and that the linear approximation is accurate near $\gamma=0$.
    }
    \label{f:prm obj 3d}
\end{figure}

Then we plot $\mu^\gamma(\phi)$ versus $\gamma$ and the linear responses at $\gamma=0$ computed for different perturbations in Figure \ref{f:prm obj 3d}.
This shows that the optimal perturbation indeed generates a linear response larger than all elements in the basis.
It also verifies that the linear response that we compute is correct.

\appendix

\section{Semigroup and density estimates for the Fokker-Planck equation}
\label{appendix}

This appendix collects the analytic estimates for the Fokker-Planck equation used throughout the paper. In the following, we always assume that
$$
b,\eta \colon \T^d\to\mathbb{R}^d
\qquad b, \eta \in L^\infty(\T^d;\mathbb{R}^d),
$$
and for $\delta\in[0,1)$ we set
$
g^\delta:=b+\delta\eta$. As discussed in Section \ref{sec: SDEs}, we consider the SDE on $\T^d$
$$
dX_t^{\delta,x} =g^{\delta}(X_t^{\delta,x}) dt + dW_t, \qquad X_0^{\delta,x} = x,
$$
and we introduce the Markov semigroup
\begin{equation}
    (P_t^{\delta} \phi) (x):= \mathbb{E}[ \phi (X_t^{\delta,x})], \qquad t \geq 0, \ \phi \in \mathcal{B}_b(\T^d).
    \label{eq: Markov operator appendix}
\end{equation}
Its dual operator on finite signed measure $\mathcal{M}(\T^d)$ is defined by duality
\begin{equation}
    \int_{\T^d} \phi \, d(T_t^{\delta} \mu) := \int_{\T^d} P_t^{\delta} \phi \, d \mu , \qquad \phi \in C(\T^d),\ \mu \in \mathcal{M}(\T^d).
    \label{eq: FPE semigroup appendix}
\end{equation}
If $\mu=f\,dx$ with $f\in L^1(\T^d)$, we write again $T_t^\delta f$ for the density of $T_t^\delta\mu$ with respect to Lebesgue measure. If $\mu=\delta_x$, this density is precisely the transition density $p^\delta(t,x,\cdot)$, and our assumption $b, \eta \in L^\infty (\T^d; \mathbb{R}^d)$, together with the non-degeneracy of the noise, are sufficient for the density $p^{\delta}$ to exist, see Remark \ref{rem:stationary_measure} for more details.

Our main object is the forward Fokker-Planck evolution associated with $T_t^\delta$. For $L^2$ initial data, this evolution is understood in the standard variational sense. For rough initial data, such as Dirac masses, it is understood through the forward Fokker-Planck semigroup $T_t^{\delta}$ defined in \eqref{eq: FPE semigroup appendix}. The estimates proved below are uniform in $\delta\in[0,1)$.

\subsection{Semigroup preliminaries, fractional Gr\"onwall estimates, and heat-kernel bounds}

We start by recalling some standard terminology from semigroup theory, for which we refer to the monographs \cite{Pazy,LunardiAnalytic}.

Given a Banach space $X$, a family $(T(t))_{t\ge0}\subset\mathcal L(X)$ is a \emph{strongly continuous semigroup} (a \emph{$C_0$-semigroup}) if
$$
T(0)=\mathrm{Id},\qquad T(t+s)=T(t)T(s)\quad \forall\ t,s\geq0,\qquad 
\lim_{t\downarrow0}\|T(t)f-f\|_X=0 \quad  \forall\,f\in X.
$$
Its \emph{generator} $(\mathcal{A},D(\mathcal A))$ is defined by
$$
D(\mathcal A):=\left\{f\in X:\ \lim_{t\downarrow0}\frac{T(t)f-f}{t}\ \text{exists in }X\right\},\qquad 
\mathcal Af:=\lim_{t\downarrow0}\frac{T(t)f-f}{t},
$$
and we write $T(t)=e^{t\mathcal A}$ when $\mathcal A$ generates $(T(t))_{t\geq0}$. The heat semigroup
$S(t):=e^{\frac12 t\Delta_{\T^d}}$ is a $C_0$-semigroup on each $L^p(\T^d)$, for any $1 \leq p < \infty,$ and satisfies the smoothing bounds
recalled below, see Lemma \ref{lemma: smoothing estimates heat semigroup}. Given $F\in L^1_{\mathrm{loc}}((0,T);X)$, a function $u\in C([0,T];X)$ is a \emph{mild solution} of
$$\partial_t u=Au+F, \qquad u(0)=u_0\in X,
$$
if it satisfies Duhamel's formula
$$
u(t)=T(t)u_0+\int_0^t T(t-s)F(s)\,ds,\qquad t\in[0,T].
$$
In this appendix we use mild formulations both for $L^2$ initial data and, after approximation, for rough initial data such as Dirac masses. In the present $L^\infty$ setting, the drift term is treated in divergence form and the divergence is understood distributionally. More precisely, if $G\in L^q(\T^d;\mathbb{R}^d)$ for some $1\le q <\infty$, then $\mathrm{div} (G)$ is understood as an element of $\mathcal{D}'(\T^d) = (C^\infty (\T^d))'$, and for every $t>0$ one has
$$
S(t)\mathrm{div}  (G)=\mathrm{div} (S(t)G)=\nabla S(t)\cdot G
\qquad\text{in }\mathcal{D}'(\T^d).
$$
This identity is the basic tool that allows us to combine the heat semigroup with bounded drifts without requiring derivatives of $b$ or $\eta$.

By It\^{o} formula, it is possible to show that if $f \in L^1(\T^d) $, then the density $u^{\delta} (t) = T_t^\delta f$ is a distributional solution of
$$
\partial_t u^{\delta} = \frac{1}{2}\Delta u^{\delta} - \mathrm{div } (u^\delta g^{\delta}), \qquad u^{\delta}|_{t = 0} = f.
$$
More precisely, for every $\phi \in C^\infty (\T^d)$ and every $t>0$
\begin{equation}
\int_{\T^d} u^\delta(t,y)  \phi(y)\,dy =\int_{\T^d} f(y) \phi(y)\,dy + \int_0^t\int_{\T^d}
u^\delta(s,y) (\frac12 \Delta \phi(y)+g^\delta(y)\cdot\nabla \phi(y) ) \,dy \,ds.
\label{eq:weak-formulation-FPE-appendix}
\end{equation}
If in addition $f \in L^2(\T^d),$ this weak solution coincides with the variational solution from the Lions-Magens theory, see \cite[Chapter 3]{Lions_MagensI}.

Moreover, the corresponding mild representation is
\begin{equation}
\label{eq:mild-representation-appendix}
u^{\delta}(t)= S(t)f-\int_0^t S(t-s)\mathrm{div} (g^\delta u^{\delta}(s) ) \, ds
= S(t)f - \int_0^t \nabla S (t-s)\cdot   (g^\delta u^{\delta}(s))  \,  ds,
\end{equation}
where the previous identities are understood in the sense of distributions since $g^{\delta} \in L^\infty$ only, see the next Lemma \ref{lemma: heat semigroup divergence}. Whenever $g^\delta u(s)\in L^q(\T^d;\mathbb{R}^d)$, the heat-kernel bounds recalled in Lemma \ref{lemma: heat semigroup divergence} imply that the last term belongs to the relevant $L^p$ space, and \eqref{eq:mild-representation-appendix} can then be used as the starting point for the estimates proved in the rest of the appendix.

We then recall the following generalisation of Gr\"onwall's lemma, see \cite[Theorem 3.3.1]{Amann1995}.
\begin{lemma}
    Let $\alpha, \beta \in [0,1), $ $\varepsilon>0$. There exists $c = c(\alpha, \beta, \varepsilon)>0$ such that the following is true. If $A,B>0$ and $u \colon [0,T) \to [0, \infty)$ satisfies $(t\mapsto t^\beta u(t)) \in L^\infty_{loc}([0,T))$ and
    $$
    u(t) \leq A t^{-\beta} + B \int_0^t (t-s)^{-\alpha} u(s) ds, \qquad \text{ for a.a. }t \in (0,T),
    $$
    then
    $$
    u(t) \leq A t^{-\beta} (1 + c B t^{1-\alpha} e^{(1+\varepsilon)\mu t}), \qquad \text{ for a.a. } t \in (0,T),
    $$
    where $\mu := \{ \Gamma (1-\alpha) B\}^{1/(1-\alpha)}$
    \label{lemma: fractional gronwall lemma}.
\end{lemma}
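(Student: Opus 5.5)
The plan is to iterate the integral inequality (the classical Henry-type argument) and then sum the resulting series with Mittag-Leffler asymptotics. Write $(Kv)(t):=\int_0^t (t-s)^{-\alpha}v(s)\,ds$ for nonnegative $v$. The hypothesis reads $u\le A t^{-\beta}+B\,Ku$ almost everywhere on $(0,T)$. Since $K$ is positivity preserving, induction gives, for every $n\ge1$,
$$
u\le A\sum_{k=0}^{n-1}B^kK^k(t^{-\beta})+B^nK^nu .
$$
The powers of $t$ are computed with the Beta function, $\int_0^t(t-s)^{-\alpha}s^{\gamma-1}\,ds=\frac{\Gamma(1-\alpha)\Gamma(\gamma)}{\Gamma(\gamma+1-\alpha)}t^{\gamma-\alpha}$. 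By induction this yields
$$
K^k(t^{-\beta})=\frac{\Gamma(1-\beta)\,\Gamma(1-\alpha)^k}{\Gamma(k(1-\alpha)+1-\beta)}\,t^{k(1-\alpha)-\beta}.
$$

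\emph{Step 1: the remainder vanishes.} Fix $T'<T$. Local boundedness of $t^\beta u$ gives $u(s)\le M s^{-\beta}$ on $(0,T']$. Hence $B^nK^nu\le M B^nK^n(t^{-\beta})$, which equals $M\Gamma(1-\beta)\,(B\Gamma(1-\alpha))^n t^{n(1-\alpha)-\beta}/\Gamma(n(1-\alpha)+1-\beta)$. This tends to $0$ as $n\to\infty$, since $\Gamma$ grows superexponentially. Letting $n\to\infty$ then gives
$$
u(t)\le A t^{-\beta}\Big(1+\Gamma(1-\beta)\sum_{k\ge1}\frac{z^k}{\Gamma(k(1-\alpha)+1-\beta)}\Big),\qquad z:=B\Gamma(1-\alpha)t^{1-\alpha}.
$$

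\emph{Step 2: summing the series.} This is the main obstacle. I need to bound the series by $cB t^{1-\alpha}e^{(1+\varepsilon)\mu t}$, where $z^{1/(1-\alpha)}=\mu t$ by the definition of $\mu$. Factoring out one power of $z$, the series equals
$$
z\sum_{j\ge0}\frac{z^j}{\Gamma(j\gamma+\gamma+1-\beta)},\qquad \gamma:=1-\alpha,
$$
which is $z\,E_{\gamma,\gamma+1-\beta}(z)$, a Mittag-Leffler function. The known asymptotics $E_{\gamma,\delta}(z)\sim \gamma^{-1}z^{(1-\delta)/\gamma}e^{z^{1/\gamma}}$ as $z\to\infty$, together with continuity on bounded sets, give $E_{\gamma,\gamma+1-\beta}(z)\le c'(1+z)^{N}e^{z^{1/\gamma}}$ for some fixed power $N$.

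To get a constant depending only on $(\alpha,\beta,\varepsilon)$, I would avoid the asymptotics and prove this bound directly. The idea is to compare $\Gamma(j\gamma+\gamma+1-\beta)$ with $\Gamma(j\gamma+1)$ by Stirling, losing only a polynomial factor in $j$. Then I would use the Laplace-type identity $\sum_j \frac{w^{j}}{\Gamma(j\gamma+1)}\le C_\gamma(1+w^{1/\gamma})e^{w^{1/\gamma}}$ with $w=z$. The polynomial prefactor $(1+z^{1/\gamma})^{N}=(1+\mu t)^{N}$ is finally absorbed into $c_\varepsilon e^{\varepsilon\mu t}$. Since the constants depend only on $\gamma=1-\alpha$, $\beta$ and $\varepsilon$, this gives $u(t)\le At^{-\beta}(1+cBt^{1-\alpha}e^{(1+\varepsilon)\mu t})$ on $(0,T')$ for every $T'<T$, hence almost everywhere on $(0,T)$.

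Alternatively, since the statement is exactly \cite[Theorem 3.3.1]{Amann1995}, one may simply cite it. The plan above is the self-contained route.
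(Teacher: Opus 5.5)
Your proof is correct and self-contained. The paper does not prove this lemma; it simply quotes it as \cite[Theorem 3.3.1]{Amann1995}, which is the shortcut you mention at the end.

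You follow the classical Henry-type route:
\begin{itemize}
\item iterate the inequality;
\item compute $K^k(t^{-\beta})$ with the Beta function;
\item kill the remainder using $u\le Ms^{-\beta}$ on $(0,T']$ and the superexponential growth of $\Gamma$;
\item reduce everything to a bound $E_{\gamma,\gamma+1-\beta}(z)\le c''e^{(1+\varepsilon)z^{1/\gamma}}$, with $z^{1/\gamma}=\mu t$.
\end{itemize}
I checked the exponents and Gamma factors, and the final constant $c=\Gamma(1-\beta)\Gamma(1-\alpha)c''$ depends only on $(\alpha,\beta,\varepsilon)$. The citation buys brevity. Your argument buys transparency. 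It identifies the equality case as $At^{-\beta}\Gamma(1-\beta)E_{\gamma,1-\beta}(z)$, and it shows the $\varepsilon$ is only needed to absorb a polynomial factor $(1+\mu t)^N$.

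One step should be made precise. After the Stirling comparison, the weight $(1+j)^N$ sits inside the sum. You cannot turn it into a prefactor $(1+z^{1/\gamma})^N$ termwise. A single elementary estimate handles both this and your ``Laplace-type'' bound:
\begin{itemize}
\item Set $x:=z^{1/\gamma}$ and use $\Gamma(s+1)\ge c_0(s/e)^s$ for $s\ge0$.
\item Then every term satisfies $x^{s}/\Gamma(s+1)\le c_0^{-1}(ex/s)^s\le c_0^{-1}e^{x}$.
\item For $s\ge 2ex$ the same bound gives at most $c_0^{-1}2^{-s}$.
\item Split the sum at $j\gamma=2ex$. This yields $\sum_j (1+j)^N x^{j\gamma}/\Gamma(j\gamma+1)\le C(1+x)^{N+1}e^{x}$ with $C=C(N,\gamma)$.
\item Finally use $(1+x)^{N+1}\le C_\varepsilon e^{\varepsilon x}$.
\end{itemize}
Alternatively, bound $(1+j)^N\le C_{N,\varepsilon}(1+\varepsilon/2)^{\gamma j}$. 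This just rescales $w$, so $e^{x}$ becomes $e^{(1+\varepsilon/2)x}$.
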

Further, we recall the smoothing estimates for the heat semigroup, see \cite[Proposition 3.5.7]{Cazenave1998} and \cite[Lemma 3]{Rothe1984}. These estimates will often lead to integral inequalities with a singular kernel of the form
$$
v(t)\le A t^{-\beta}+B\int_0^t (t-s)^{-\alpha}v(s)\,ds,
$$
where $\alpha=\tfrac12$ typically comes from the gradient bound for the heat semigroup. Lemma \ref{lemma: fractional gronwall lemma} is the tool that turns these convolution-type inequalities into explicit time bounds.

We next recall the classical $L^q$-$L^p$ smoothing estimates for the heat semigroup on $\T^d$. These bounds will be repeatedly combined with Lemma \ref{lemma: heat semigroup divergence}, which is the main tool allowing us to treat bounded drifts in divergence form.
\begin{lemma}
Let $1\le q\le p\le\infty$ and $S(t):=e^{\frac12 t\Delta_{\mathbb{T}^d}}$ be the heat semigroup on $\mathbb{T}^d$.
Then for every $T>0$ there exists $C=C(d,p,q,T)>0$ such that for all $t\in(0,T]$,
\begin{enumerate}
    \item[(i)] $\|S(t)f\|_{L^p(\T^d)} \le C\, t^{-\frac d2(\frac1q-\frac1p)}\|f\|_{L^q(\T^d)}$
    \item[(ii)] $\|\nabla S(t)f\|_{L^p(\T^d)} \le C\, t^{-\frac12-\frac d2(\frac1q-\frac1p)}\|f\|_{L^q(\T^d)}$
\end{enumerate}
\label{lemma: smoothing estimates heat semigroup}
\end{lemma}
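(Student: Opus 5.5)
Both bounds come from the Fourier representation of the heat semigroup on $\T^d$, or equivalently from the periodised Gaussian kernel. We write $S(t)f = G_t * f$, where
$$
G_t(x)=\sum_{k\in\Z^d}(2\pi t)^{-d/2}e^{-|x+k|^2/(2t)}
$$
is the periodisation of the Euclidean heat kernel for $\frac12\Delta$, as in \eqref{eq: density on Td}. Then (i) follows from Young's convolution inequality once we show
$$
\|G_t\|_{L^r(\T^d)}\le C t^{-\frac d2(1-\frac1r)},\qquad \frac1r=1+\frac1p-\frac1q,\quad t\in(0,T].
$$
Similarly, (ii) follows from $\nabla S(t)f=(\nabla G_t)*f$ and the bound
$$
\|\nabla G_t\|_{L^r(\T^d)}\le C t^{-\frac12-\frac d2(1-\frac1r)}.
$$
Since $q\le p$ forces $r\in[1,\infty]$, Young's inequality applies on the compact group $\T^d$ with normalised measure.

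The key step is the kernel estimate, which we prove by interpolation. Hölder's inequality gives $\|G_t\|_r\le \|G_t\|_1^{1/r}\|G_t\|_\infty^{1-1/r}$, so it suffices to handle $r=1$ and $r=\infty$. For $r=1$, $G_t\ge0$ has total mass $1$, so $\|G_t\|_1=1$. For $r=\infty$, we show $\|G_t\|_\infty\le Ct^{-d/2}$ for $t\le T$. We split the lattice sum over $k$: for $x\in[-\frac12,\frac12)^d$ the terms with $|k|_\infty\le1$ contribute at most $3^d(2\pi t)^{-d/2}$. The remaining terms satisfy $|x+k|\ge |k|/2$, so their sum is bounded by $(2\pi t)^{-d/2}\sum_k e^{-|k|^2/(8t)}$. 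This sum is bounded uniformly for $t\le T$, by comparison with a Gaussian integral or simply because it is decreasing in $1/t$. This gives $\|G_t\|_\infty\le C(d,T)t^{-d/2}$.

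The gradient kernel is handled the same way. We have
$$
\nabla G_t(x)=-\sum_k \frac{x+k}{t}(2\pi t)^{-d/2}e^{-|x+k|^2/(2t)}.
$$
Using $\frac{|z|}{t}e^{-|z|^2/(2t)}\le C t^{-1/2}e^{-|z|^2/(4t)}$, we get $|\nabla G_t|\le Ct^{-1/2}\tilde G_{2t}$ pointwise, where $\tilde G_{2t}$ is the periodised Gaussian at time $2t$ up to a constant factor. The $L^1$ and $L^\infty$ bounds for $\tilde G_{2t}$ from the previous step then give $\|\nabla G_t\|_1\le Ct^{-1/2}$ and $\|\nabla G_t\|_\infty\le Ct^{-1/2-d/2}$, and interpolation yields the claim.

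The only delicate point is the uniformity of constants in $t\in(0,T]$. For large $t$ the decay $t^{-d/2}$ fails on the torus, because $G_t\to1$; this is exactly why $C$ depends on $T$ and why the lattice tail is bounded only for $t\le T$. The endpoint cases $p=\infty$ or $q=1$ need no separate treatment, since Young's inequality covers them.
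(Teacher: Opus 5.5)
Your proposal is correct and follows essentially the same route as the paper: periodise the Gaussian kernel, bound the kernel and its gradient in $L^1$ and $L^\infty$ uniformly for $t\in(0,T]$, interpolate to $L^r$ via H\"older, and conclude with Young's convolution inequality. The differences are cosmetic. Your split of the lattice sum is unnecessary, since $|x+k|\ge |k|/2$ already holds for every $k$ when $x\in[-\frac12,\frac12)^d$. Your pointwise domination $|\nabla G_t|\le C t^{-1/2}G_{2t}$ replaces the paper's separate unfolding computation of $\|\nabla K_t\|_1$ over $\mathbb{R}^d$, and it gives the same bounds.
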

\begin{proof}
    Assume for simplicity $D =[-\tfrac{1}{2}, \tfrac{1}{2})^d$ to be the representative domain for the torus. The heat semigroup has the form $S(t) f= K_t * f$, where $$
    K_t (x):= \sum_{k \in \mathbb{Z}^d} G_t(x+k), \qquad x \in \mathbb{T}^d,
    $$
    denotes the heat kernel on the torus, obtained by the periodization of the heat kernel $G_t$ on $\mathbb{R}^d$ given by
    $$
    G_t (z) := (2 \pi t)^{-d/2} e^{- \frac{\vert z \vert ^2}{2t}}, \qquad z \in \mathbb{R}^d.
    $$

    \textit{Step $1$: $L^1$ and $L^\infty $ bound for $K_t$.} 
    The fact that
    \begin{equation}
    \| K_t \|_{L^1 (\T^d)} = \int_{\T^d} K_t (x) \, dx = 1
    \label{eq: L1 bound heat kernel}
    \end{equation}
    is a consequence of the fact that $K_t$ is the periodization of the heat kernel on the whole space $\mathbb{R}^d$, and can be checked by an explicit computation. For the $L^\infty $ bound, it is possible to check that $\vert x+k \vert \geq \tfrac{1}{2} \vert k \vert$ for any $k \in \mathbb{Z}^d$ and any $x \in D = [- \tfrac{1}{2}, \frac{1}{2})^d$. Thus
    \begin{equation}
    K_t(x) = (2\pi t)^{-d/2}\sum_{k \in \mathbb{Z}^d}  e^{-\frac{\vert x+k \vert^2 }{2t}} \leq (2\pi t)^{-d/2} \sum_{k \in \mathbb{Z}^d}e^{- \frac{\vert k \vert ^2}{4t}} = C(d,T) t^{-d/2}
    ,
    \label{eq: Linf bound heat kernel}
    \end{equation}
    uniformly for any $t \in (0,T].$
    
    \textit{Step $2$: $L^1$ and $L^\infty $ bound for $\nabla K_t$.}
    To bound the gradient, first differentiate the Gaussian kernel
    $$
    \nabla G_ t(x) = - \frac{x}{t} G_t (x) = - (2 \pi )^{-d/2} t^{-d/2 -1} x e^{- \frac{\vert x \vert ^2}{2t}}.
    $$
    Considering the absolute value, we obtain
    \begin{equation}
    \vert \nabla G_t (x) \vert  = ( 2 \pi )^{-d/2} t^{-d/2 -1} \vert x \vert e^{- \frac{ \vert x \vert^2}{2t}}.
    \label{eq: aux bound abs G_t}
    \end{equation}
    Making the substitution $r = \vert x \vert / \sqrt{t},$ we see that
    $$
    \vert x \vert  e^{- \frac{\vert x \vert ^2}{2t}} = \sqrt{t}  r  e ^{-\frac{r^2}{2}}.
    $$
    Since the function $ r \mapsto r e^{- \frac{r^2}{4}}$ is bounded on $[0,+\infty),$ we get
    $$
    r e^{- \frac{r^2}{2}} = ( r e^{- \frac{r^2}{4}}) e^{- \frac{r^2}{4}} \leq C e^{- \frac{r^2}{4}}.
    $$
    Hence, going back to the variable $x$, we have verified that
    $$
    \vert x \vert e^{- \frac{ \vert x \vert ^2}{2t}} \leq C \sqrt{t} e^{- \frac{ \vert x \vert ^2}{4t}}.
    $$
 Substituting this into \eqref{eq: aux bound abs G_t}, we obtain
    $$
    \vert \nabla G_ t(x) \vert \leq C_d t^{-d/2 -1} \sqrt{t}e^{- \frac{\vert x \vert ^2}{4t}} = C_d t^{-(d+1)/2}e^{- \frac{\vert x \vert ^2}{4t}}.
    $$
    Therefore, we obtain the $L^\infty $ bound for $\nabla K$ as follows
    \begin{equation}
\vert \nabla K_t (x) \vert \leq \sum_{k \in \mathbb{Z}^d} \vert  \nabla G_t (x+k) \vert \leq C_d t^{-(d+1)/2} \sum_{k \in \mathbb{Z}^d} e^{- \frac{\vert x+k \vert ^2}{4t}} \leq C (d,T) t^{- (d+1) /2}.
        \label{eq: Linfty bound nabla K}
    \end{equation}

    For the $L^1$ bound, using Fubini-Tonelli, we have
    $$
    \| \nabla K_t  \|_{L^1 (\T^d)} \leq \sum_{k \in \mathbb{Z}^d} \int_{[-\tfrac{1}{2}, \tfrac{1}{2})^d} \vert \nabla G_t (x+k) \vert \, dx = \int_{\mathbb{R}^d} \vert \nabla G_t (y) \vert \, dy.
    $$
    Making the substitution $ y = \sqrt{t}z$, we have
    $$
    \int_{\mathbb{R}^d} \vert \nabla G_t (y) \vert \, dy = t^{-1/2} \int_{\mathbb{R}^d} \vert z \vert (2\pi )^{-d/2} e^{- \frac{\vert z \vert ^2}{2}}\, dz \leq C_d t^{-1/2}
    $$
    Hence, combining the last two inequalities we have proved that
    \begin{equation}
        \| \nabla K_t \|_{L^1(\mathbb{T}^d)} \leq C t^{-1/2}
        \label{eq: L1 bound nabla K}
    \end{equation}
    
    \textit{Step $3$: $L^r$ bound for $K_t$ and $\nabla K_t$, for any $r \in [1,\infty]$.}
    Let $r \in [1,\infty]$. The $L^r$ bounds for $K_t$ and $\nabla K_t$ follow from interpolation. Indeed, using \eqref{eq: L1 bound heat kernel}, \eqref{eq: Linf bound heat kernel} and interpolation, we obtain
    \begin{equation}
        \| K_t \|_r \leq  \| K_t \|_{1} ^{1/r} \| K_t \|_{\infty }^{1- 1/r} \leq C t^{-\frac{d}{2}\left( 1- \frac{1}{r}\right)}.
        \label{eq: Lr bound K}
    \end{equation}
    On the other hand, \eqref{eq: L1 bound nabla K}, \eqref{eq: Linfty bound nabla K} and again interpolation inequality, yields
    \begin{equation}
    \| \nabla K_t \|_r \leq \| \nabla K_t \|_1^{1/r} \| \nabla K_t \|_\infty ^{1-1/r} \leq C t^{-\frac{1}{2}- \frac{d}{2}\left( 1-  \frac{1}{r}\right)}
        \label{eq: Lr bound nabla K}.
    \end{equation}
    
    \textit{Step $4$: proof of (i) and (ii).} Fix $r \in [1,\infty]$ be given by
    $$
    1+ \frac{1}{p} = \frac{1}{q}+ \frac{1}{r}.
    $$
    Since $S(t) f = K_t * f$, applying Young convolution inequality, we get
    $$
    \| S(t) f \|_{L^p (\T^d)} \leq \| K_t \|_{L^r (\T^d)} \| f \|_{L^q (\T^d)}.
    $$
    By \eqref{eq: Lr bound K}, we obtain
    $$
    \| K_t \|_{L^r (\T^d)} \leq C t^{- \frac{d}{2}\left( 1- \frac{1}{r} \right)} = C t^{- \frac{d}{2}\left( \frac{1}{q}- \frac{1}{p} \right)},
    $$
    which proves (i). The proof of (ii) follows in a similar way by using that $\nabla S(t) f = ( \nabla K_t) * f$, using Young inequality for convolution, and lastly applying the $L^r$ bound for $\nabla K_t$ from \eqref{eq: Lr bound nabla K}.
\end{proof}
The next lemma states the distributional identity that allows one to move the divergence onto the heat semigroup. This is the key tool used below to handle drift terms under the only assumption $b,\eta\in L^\infty$.
\begin{lemma}
\label{lemma: heat semigroup divergence}
Let $1\leq q\leq p\leq\infty$, let $T>0$, and let $G\in L^q(\T^d;\mathbb{R}^d).$ Then, for every $t\in(0,T]$, the distribution $S(t) \mathrm{div} (G)$ is well defined and satisfies
$$
S(t) \mathrm{div} (G)=\mathrm{div} (S(t) G)=\nabla S(t)\cdot G
\qquad\text{in }\mathcal D'(\T^d).
$$
Moreover, there exists a constant $C=C(d,p,q,T)>0$ such that
$$
\|S(t) \mathrm{div} (G)\|_{p} \leq C t^{-\frac{1}{2}-\frac{d}{2}(\frac{1}{q}-\frac{1}{p})} \| G \|_{q}, \qquad t\in(0,T].
$$
\end{lemma}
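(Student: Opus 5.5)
We define $S(t)\mathrm{div}(G)$ by duality and then reduce everything to the kernel bounds from the proof of Lemma \ref{lemma: smoothing estimates heat semigroup}. Since $S(t)$ is convolution with the even, smooth kernel $K_t$, it is self-adjoint on $L^2(\T^d)$. It therefore extends to $\mathcal D'(\T^d)$ by setting $\langle S(t)T,\varphi\rangle:=\langle T,S(t)\varphi\rangle$ for $\varphi\in C^\infty(\T^d)$. This is well defined because $S(t)$ maps $C^\infty(\T^d)$ continuously into itself. For $G\in L^q\subset L^1$ we have $\mathrm{div}(G)\in\mathcal D'$ with $\langle \mathrm{div}\, G,\psi\rangle=-\int G\cdot\nabla\psi$. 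Hence
\[
\langle S(t)\mathrm{div}(G),\varphi\rangle=-\int_{\T^d} G\cdot \nabla S(t)\varphi\,dx .
\]

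The identities follow from this formula. The first step is that $\nabla$ commutes with convolution by $K_t$, so $\nabla S(t)\varphi=S(t)\nabla\varphi$. Next, the self-adjointness of $S(t)$, applied componentwise via Fubini, gives $\int G\cdot S(t)\nabla\varphi=\int S(t)G\cdot\nabla\varphi$. The right-hand side is $-\langle \mathrm{div}(S(t)G),\varphi\rangle$, which proves $S(t)\mathrm{div}(G)=\mathrm{div}(S(t)G)$. Finally, $S(t)G=K_t*G$ is smooth for $t>0$, so its divergence is the classical function $\sum_i \partial_i K_t * G_i$. We write this function as $(\nabla K_t)*G$, and it is what the paper denotes $\nabla S(t)\cdot G$. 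All Fubini interchanges are justified because $K_t,\nabla K_t\in L^\infty$ by \eqref{eq: Linf bound heat kernel} and \eqref{eq: Linfty bound nabla K}, and $G\in L^1$.

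For the estimate, we apply Young's convolution inequality componentwise to $\sum_i(\partial_iK_t)*G_i$. We choose $r$ with $1+\frac1p=\frac1q+\frac1r$, which gives
\[
\|S(t)\mathrm{div}(G)\|_p\le \sum_i\|\partial_i K_t\|_r\|G_i\|_q\le C\|\nabla K_t\|_r\|G\|_q .
\]
The bound \eqref{eq: Lr bound nabla K} controls $\|\nabla K_t\|_r$, and since $1-\frac1r=\frac1q-\frac1p$ we obtain exactly $C t^{-\frac12-\frac d2(\frac1q-\frac1p)}$ for $t\in(0,T]$.

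The main obstacle is not the estimate itself but making the distributional meaning rigorous for every $q\ge1$. In particular, we must check that the duality definition agrees with the convolution formula, so that $S(t)\mathrm{div}(G)$ really is the $L^p$ function $(\nabla K_t)*G$. To secure this, we would first verify the identity for $G\in C^\infty$, where everything is classical, and then pass to $G\in L^q$. For $q<\infty$ this uses density of $C^\infty$ in $L^q$: both sides are continuous from $L^q$ into $\mathcal D'$, and the right-hand side is also continuous into $L^p$ by the Young bound. The case $q=\infty$ follows directly from the duality computation, since no density argument is needed there.
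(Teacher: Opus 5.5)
Your proof is correct and follows essentially the same route as the paper. You define $S(t)\mathrm{div}(G)$ by duality, commute $\nabla$ with $S(t)$, and move $S(t)$ onto $G$ using the symmetry of $K_t$. You then identify $\mathrm{div}(S(t)G)=\sum_i(\partial_iK_t)*G_i$ and conclude with Young's inequality and the $L^r$ bound on $\nabla K_t$, which is exactly what the paper does when it applies Lemma~\ref{lemma: smoothing estimates heat semigroup}(ii) componentwise.

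The density argument in your last paragraph is harmless but unnecessary. The duality computation you already gave is justified by Fubini using only $K_t,\nabla K_t\in L^\infty$ and $G\in L^1$, so it works directly for every $q\in[1,\infty]$.
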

\begin{proof}
We recall that, for $G\in L^q(\T^d;\mathbb{R}^d)$, the divergence $\mathrm{div} (G)$ is understood in the sense of distributions
$$
\langle \mathrm{div} (G),\varphi\rangle := -\int_{\T^d} G(x)\cdot \nabla\varphi(x) \, dx, \qquad \varphi\in C^\infty(\T^d).
$$
Since $S(t)$ maps distributions into smooth functions, $S(t) \mathrm{div} (G)$ is well defined in $\mathcal D'(\T^d)$ by duality. Let $\varphi\in C^\infty(\T^d)$. Then
\begin{equation*}
\begin{split}
\langle S(t)\mathrm{div} (G),\varphi\rangle &=\langle \mathrm{div} (G),S(t) \varphi \rangle =-\int_{\T^d} G(x)\cdot \nabla(S(t)\varphi)(x)\,dx =-\int_{\T^d} G(x)\cdot S(t)(\nabla\varphi)(x)\,dx \\
&=-\int_{\T^d} S(t)G(x)\cdot \nabla\varphi(x)\,dx =\langle \mathrm{div} (S(t)G),\varphi\rangle.
\end{split}
\end{equation*}
This implies that $
S(t) \mathrm{div} ( G)=\mathrm{div} (S(t)G)$ in $\mathcal D'(\T^d).$ Since $S(t)G=K_t*G$ componentwise, where $K_t$ is the heat kernel on $\T^d$, we further have
$$
\mathrm{div} (S(t)G) =\sum_{i=1}^d \partial_i(K_t* G_i) =\sum_{i=1}^d (\partial_i K_t)* G_i =\nabla S(t)\cdot G \qquad\text{in }\mathcal D'(\T^d).
$$

It remains to prove the $L^p$ bound. By the previous identity and the triangle inequality,
$$
\|S(t)\mathrm{div}( G)\|_{p} = \|\nabla S(t)\cdot G\|_{p}
\leq \sum_{i=1}^d \|\partial_i S(t)G_i\|_{p}.
$$
Applying Lemma \ref{lemma: smoothing estimates heat semigroup}(ii) to each component $G_i$, we obtain
$$
\|\partial_i S(t)   G_i   \|_{p}
\leq C t^{-\frac{1}{2}-\frac{d}{2}(\frac{1}{q}-\frac{1}{p})} \| G_i \|_{q}.
$$
Summing over $i=1,\dots,d$ and using
$$
\sum_{i=1}^d \| G_i\|_{q}\leq C(d)\|G\|_{q},
$$
we conclude that
$$
\|S(t)\mathrm{div} (G)\|_{p} \leq C t^{-\frac{1}{2}-\frac {d}{2}(\frac{1}{q}-\frac{1}{p})} \| G \|_{q},
$$
for a constant $C=C(d,p,q,T)>0$.
\end{proof}

\subsection{Dirac initial data for the Fokker-Planck equation}

\subsubsection{Uniform semigroup estimates}

The following result establishes a contraction on $L^1(\mathbb{T}^d)$ for the semigroup associated to the forward Fokker-Planck equation. It can be made sharper by proving that the $L^1$-norm is preserved, a fact that however is not needed for our analysis.
\begin{lemma}
    Let $b, \eta  \colon \mathbb{T}^d \to \mathbb{R}^d$ and $T>0$. Assume $b,\eta \in L^{\infty}(\mathbb{T}^d; \mathbb{R}^d)$.  Consider, for $\delta \geq 0$, the Fokker-Planck semigroup $(T_t^{\delta})_t$ introduced in \eqref{eq: FPE semigroup appendix}. Then
    $$
    \| T_t^{\delta} f \|_1 \leq \|f \|_1, \qquad t >0.
    $$
    \label{lemma: contraction on L1}
\end{lemma}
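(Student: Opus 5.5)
The plan is to prove the contraction by duality with the Markov semigroup $(P_t^\delta)_t$ defined in \eqref{eq: Markov operator appendix}, since $T_t^\delta$ is defined as its dual in \eqref{eq: FPE semigroup appendix}. The key observation is that $P_t^\delta$ is a contraction on bounded functions: for every $\phi\in C(\T^d)$ and every $x\in\T^d$,
$$
|(P_t^\delta\phi)(x)| = |\mathbb{E}[\phi(X_t^{\delta,x})]| \le \mathbb{E}[|\phi(X_t^{\delta,x})|] \le \|\phi\|_\infty .
$$
This step uses only that $X_t^{\delta,x}$ is a well-defined $\T^d$-valued random variable, which is guaranteed by the strong well-posedness recalled in Remark \ref{rem:stationary_measure}. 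It does not need any information on the density.

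Next, fix $f\in L^1(\T^d)$ and set $\mu=f\,dx$. By \eqref{eq: FPE semigroup appendix}, for every $\phi\in C(\T^d)$ we have
$$
\Big|\int_{\T^d}\phi\, d(T_t^\delta\mu)\Big| = \Big|\int_{\T^d} P_t^\delta\phi(x) f(x)\,dx\Big| \le \|P_t^\delta\phi\|_\infty\|f\|_1 \le \|\phi\|_\infty\|f\|_1 .
$$
By the Riesz representation theorem on the compact space $\T^d$, the total variation norm of a finite signed measure $\nu$ equals $\sup\{|\int\phi\,d\nu| : \phi\in C(\T^d),\ \|\phi\|_\infty\le1\}$. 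Taking the supremum therefore gives $\|T_t^\delta\mu\|_{TV}\le\|f\|_1$.

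It remains to identify this total variation with the $L^1$ norm of the density $T_t^\delta f$. The measure $T_t^\delta\mu$ is absolutely continuous: it equals $\int p^\delta(t,x,\cdot)f(x)\,dx$, in analogy with Remark \ref{rem:density-relation}, and $p^\delta(t,x,\cdot)$ exists by Remark \ref{rem:stationary_measure}. The total variation of an absolutely continuous measure is the $L^1$ norm of its density, so $\|T_t^\delta f\|_1\le\|f\|_1$.

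This last identification is the only point needing care. To make it rigorous without going through the kernel, one can instead test directly against $\phi=\mathrm{sgn}(T_t^\delta f)$, which lies in $\mathcal{B}_b(\T^d)$ rather than $C(\T^d)$. To justify this, I will extend the duality \eqref{eq: FPE semigroup appendix} from $C(\T^d)$ to $\mathcal{B}_b(\T^d)$ by a monotone class (or dominated convergence) argument. Both sides are integrals against finite measures: the left side is $\int\phi\,d(T_t^\delta\mu)$, and the right side is $\int\mathbb{E}[\phi(X_t^{\delta,x})]f(x)\,dx$. Both depend continuously on $\phi$ under bounded pointwise convergence, so the identity passes from $C(\T^d)$ to $\mathcal{B}_b(\T^d)$. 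The bound $\|P_t^\delta\phi\|_\infty\le\|\phi\|_\infty$ then yields
$$
\|T_t^\delta f\|_1=\int\mathrm{sgn}(T_t^\delta f)\,T_t^\delta f\,dx\le\|f\|_1 .
$$
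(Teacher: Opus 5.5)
Your proof is correct, but it takes a different route from the paper's. The paper works on the density side. It uses the kernel representation $(T_t^\delta f)(y)=\int_{\T^d}p^\delta(t,x,y)f(x)\,dx$ with $p^\delta\ge 0$ and $\int_{\T^d}p^\delta(t,x,y)\,dy=1$, and a single application of Tonelli gives $\|T_t^\delta f\|_1\le\|f\|_1$.

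You work on the dual side. Your only input is that $P_t^\delta$ is a sup-norm contraction, which is the dual form of the same fact that $p^\delta(t,x,\cdot)$ is a probability density. The identification $C(\T^d)^*=\mathcal M(\T^d)$ then turns this into $\|T_t^\delta\mu\|_{TV}\le\|\mu\|_{TV}$.

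\begin{itemize}
\item \emph{What your route buys:} generality. The bound holds for every finite signed measure, Dirac masses included. It needs nothing about the kernel beyond absolute continuity of $T_t^\delta\mu$, which is required only to read the total variation as an $L^1$ norm.
\item \emph{What the paper's route buys:} concreteness. The kernel formula is used throughout the paper anyway, and the same Tonelli computation also gives conservation of the integral.
\end{itemize}

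Two small remarks on your last step.

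\begin{itemize}
\item Once $T_t^\delta\mu=g\,dx$, the identity $\|T_t^\delta\mu\|_{TV}=\|g\|_1$ is elementary, so this is not really the delicate point.
\item Your sign-testing variant still presupposes that the density $T_t^\delta f$ exists, so it does not actually avoid the absolute-continuity input. If you want absolute continuity without the kernel formula, your extension of the duality to $\mathcal B_b(\T^d)$ is exactly the right tool. Testing against the indicator of a Lebesgue-null Borel set $A$ gives $T_t^\delta\mu(A)=\int_{\T^d}\mathbb P(X_t^{\delta,x}\in A)f(x)\,dx=0$, because each law of $X_t^{\delta,x}$ has a density. This extension does use measurability of $x\mapsto\mathbb E[\phi(X_t^{\delta,x})]$ for Borel $\phi$. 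That is standard for this SDE, but you should state it.
\end{itemize}
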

\begin{proof}
Fix $\delta\in[0,1)$ and set $g^\delta:=b+\delta\eta$. Consider the SDE
$$
dX_t^{\delta,x}=g^\delta(X_t^{\delta,x})\,dt+dW_t,\qquad X_0^{\delta,x}=x.
$$
By the assumptions made in the paper, this equation admits a unique global strong solution and, for every $t>0$, the law of $X_t^{\delta,x}$ has a density with respect to Lebesgue measure on $\T^d$, see Remark \ref{rem:stationary_measure} for more details. We denote this density by
$$
p^\delta(t,x,y), \qquad t>0,    x,y\in\T^d.
$$
Since $p^\delta(t,x,\cdot)$ is the density of a probability measure, it satisfies
$$
p^\delta(t,x,y)\geq 0, \qquad \int_{\T^d} p^\delta (t,x,y) \, dy =1, \qquad t>0,\ x\in\T^d.
$$
For $f\in L^1(\T^d)$, the semigroup $(T_t^{\delta})_t$ admits the representation
$$
(T_t^\delta f)(y)=\int_{\T^d} p^\delta(t,x,y) f(x) \, dx, \qquad t>0, y\in\T^d.
$$
We now prove that $(T_t^\delta)_{t>0}$ is contractive on $L^1(\T^d)$. Using Tonelli's theorem, the positivity of $p^\delta$, and the fact that $\int_{\T^d}p^\delta(t,x,y)\,dy=1$, we obtain
\begin{align*}
\|T_t^\delta f\|_1
&= \int_{\T^d} \left| \int_{\T^d}  p^\delta(t,x,y)  f(x)\,  dx  \right| dy \leq \int_{\T^d}\int_{\T^d}  p^\delta(t,x,y)  |f(x)| \, dx \,dy \\
&= \int_{\T^d}|f(x)|\left(\int_{\T^d}   p^\delta(t,x,y) \ ,dy \right) dx = \int_{\T^d} | f (x )| \, dx = \|f\|_1.
\end{align*}
Therefore $ \|T_t^\delta f\|_1\leq \|f\|_1$. 
\end{proof}

The next result gives an $L^1 \to   L^\infty$ smoothing estimate for the forward Fokker-Planck semigroup, uniform in $\delta$. By approximation, it also yields a corresponding $L^\infty$ bound for the density.
\begin{proposition}
    Let $b, \eta \colon \mathbb{T}^d \to \mathbb{R}^d$ and $T>0$. Assume $b, \eta \in L^{\infty}(\mathbb{T}^d; \mathbb{R}^d)$ and consider the Fokker-Planck semigroup $(T_t^{\delta})_t$ introduced in \eqref{eq: FPE semigroup appendix}. Then there exists $ C'= C'( \| b \|_{\infty },  \| \eta  \|_{\infty }, d, T)>0 $ such that for any $t \in (0,T]$ and $\delta \in [0,1)$
    \begin{equation}
            \| T_t^{\delta}f \|_\infty \leq C' t^{-d/2} \| f \|_1
            \label{eq:Linfty_bound}
    \end{equation}
    for any $f \in L^1(\T^d)$. Further, for any $x \in \T^d$,
    $$
     \| T_t^{\delta} \delta _x \|_\infty \leq C' t^{-d/2}.
    $$
    \label{prop:Linfty_bound_density}
\end{proposition}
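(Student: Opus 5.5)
We will prove \eqref{eq:Linfty_bound} by first establishing an $L^1\to L^p$ bound for a finite $p>1$ through the mild formulation \eqref{eq:mild-representation-appendix} and the fractional Gr\"onwall Lemma \ref{lemma: fractional gronwall lemma}, and then bootstrapping to $L^\infty$ with the semigroup property. Set $u(t):=T_t^\delta f$ for $f\in L^1(\T^d)$, and assume first that $f$ is smooth, so that $u$ is regular enough for $t\mapsto t^{\beta}\|u(t)\|_p$ to be locally bounded; general $f\in L^1$ will be handled by density together with the $L^1$-contraction of Lemma \ref{lemma: contraction on L1}. Fix $p\in(1,\infty)$ with $\frac d2(1-\frac1p)<\frac12$, i.e.\ $p<\frac{d}{d-1}$ (any $p>1$ if $d=1$). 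From \eqref{eq:mild-representation-appendix}, Lemma \ref{lemma: smoothing estimates heat semigroup}(i) with $q=1$, and Lemma \ref{lemma: heat semigroup divergence} with $q=p$, we get
\[
\|u(t)\|_p\le C t^{-\frac d2(1-\frac1p)}\|f\|_1 + C(\|b\|_\infty+\|\eta\|_\infty)\int_0^t (t-s)^{-\frac12}\|u(s)\|_p\,ds ,
\]
using $\|g^\delta\|_\infty\le \|b\|_\infty+\|\eta\|_\infty$ for $\delta\in[0,1)$. Lemma \ref{lemma: fractional gronwall lemma} with $\beta=\frac d2(1-\frac1p)<1$ and $\alpha=\frac12$ then gives $\|u(t)\|_p\le C t^{-\frac d2(1-\frac1p)}\|f\|_1$ on $(0,T]$, with $C$ uniform in $\delta$.

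Next we climb in integrability. For exponents $q\le r$ with $\frac d2(\frac1q-\frac1r)<\frac12$, applying the same argument on the interval $[t/2,t]$, with the restarted datum $u(t/2)$ and the mild formula, yields $\|u(t)\|_r\le C t^{-\frac d2(\frac1q-\frac1r)}\|u(t/2)\|_q$; here the Duhamel integral is bounded via Lemma \ref{lemma: heat semigroup divergence} with exponent pair $(r,r)$, which gives the integrable kernel $(t-s)^{-1/2}$, and Gr\"onwall closes the estimate. Iterating finitely many times along a chain $1=q_0<q_1<\dots<q_m=\infty$ with gaps $\frac1{q_j}-\frac1{q_{j+1}}<\frac1d$, and splitting $t$ dyadically, the exponents telescope to $t^{-d/2}$. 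For the final step into $L^\infty$ we use Lemma \ref{lemma: heat semigroup divergence} with $p=\infty$, $q=q_{m-1}>d$, so that $(t-s)^{-\frac12-\frac{d}{2q}}$ is integrable; this is the main technical point. To avoid running Gr\"onwall directly in $L^\infty$, where the a priori local boundedness hypothesis is delicate, we instead estimate $\|u(t)\|_\infty\le \|S(t/2)u(t/2)\|_\infty+\int_{t/2}^t C(t-s)^{-\frac12-\frac d{2q}}\|u(s)\|_q\,ds$ using the already proven $L^q$ bound on $[t/2,t]$, which needs no Gr\"onwall at all. This gives $\|u(t)\|_\infty\le C't^{-d/2}\|f\|_1$.

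For the Dirac datum, take mollifications $f_n=\rho_n(\cdot-x)\ge0$ with $\|f_n\|_1=1$, $f_n\rightharpoonup\delta_x$ weakly. By \eqref{eq: FPE semigroup appendix} and the Feller continuity of $P_t^\delta$ (strong Feller, as recalled in Remark \ref{rem:stationary_measure}), $\int\phi\,T_t^\delta f_n\to\int P_t^\delta\phi\,d\delta_x=\int\phi\,p^\delta(t,x,\cdot)$ for $\phi\in C(\T^d)$. Since $\|T_t^\delta f_n\|_\infty\le C't^{-d/2}$ uniformly in $n$, this bound passes to the weak-$*$ limit, giving $\|p^\delta(t,x,\cdot)\|_\infty\le C't^{-d/2}$. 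We expect the main obstacle to be the justification of the a priori regularity required by Lemma \ref{lemma: fractional gronwall lemma}, together with the identification of the mild and probabilistic solutions for smooth data; both are handled in the smooth-data setting by standard parabolic theory and the uniqueness recalled after \eqref{eq:weak-formulation-FPE-appendix}.
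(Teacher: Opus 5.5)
Your argument is correct, but it takes a different route from the paper.

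\textbf{The paper's route.} The paper works directly in $L^\infty$. For smooth data $f_n$ it splits the Duhamel integral at $t/2$:
\begin{itemize}
\item on $[0,t/2]$ it uses the $L^1\to L^\infty$ bound $(t-s)^{-\frac12-\frac d2}$ for $\nabla S(t-s)$, which is harmless there because $t-s\ge t/2$, together with the $L^1$-contraction of Lemma \ref{lemma: contraction on L1};
\item on $[t/2,t]$ it uses the $L^\infty\to L^\infty$ bound $(t-s)^{-1/2}$ and the weighted quantity $M_n(t)=\sup_{0<r\le t} r^{d/2}\|u_n(r)\|_\infty$.
\end{itemize}
This gives $M_n(t)\le C\|f\|_1+CKt^{1/2}\|f\|_1+CKt^{1/2}M_n(t)$. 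The last term is absorbed for $t\le t_0$ small, with no Gr\"onwall lemma at all. The bound is then extended to $(t_0,T]$ via $T^\delta_t=T^\delta_{t_0/2}T^\delta_{t-t_0/2}$ and the $L^1$-contraction. The a priori finiteness of $M_n$ comes from $u_n\in C([0,T];C(\T^d))$ for smooth data.

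\textbf{Your route.} You never estimate in $L^\infty$ with a Gr\"onwall or absorption argument. Instead you build a chain of $L^q\to L^r$ smoothing estimates with gaps $\frac1q-\frac1r<\frac1d$, closing each by fractional Gr\"onwall in a finite-exponent space. You then finish with a Gr\"onwall-free step into $L^\infty$, exploiting $q>d$ so that $(t-s)^{-\frac12-\frac d{2q}}$ is integrable on $[t/2,t]$.

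\textbf{What each buys.} The paper's proof is shorter: a single step, no chain, no Gr\"onwall. The price is an a priori $L^\infty$ bound and a smallness-plus-semigroup extension. Your proof needs only $L^p$-valued a priori bounds for finite $p$, which come for free from the fixed-point construction in $C([0,T];L^p)$, and it avoids absorption. It also yields the intermediate $L^q\to L^r$ bounds as a by-product; for instance, Proposition \ref{prop: properties semigroup} would follow directly rather than by interpolation. The cost is roughly $d+1$ iterations.

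\textbf{Two minor remarks.}
\begin{itemize}
\item The ``restart on $[t/2,t]$'' step is best stated as the estimate $\|T^\delta_\sigma g\|_r\le C\sigma^{-\frac d2(\frac1q-\frac1r)}\|g\|_q$ for all $\sigma\in(0,T]$, since Gr\"onwall needs the inequality at every intermediate time. You then apply it with $g=u(t/2)$ and $\sigma=t/2$. Also, $\frac d2(\frac1q-\frac1r)<1$ would already suffice for Lemma \ref{lemma: fractional gronwall lemma}, so your stricter condition is not needed for that step.
\item For the Dirac datum you use the known existence of the density $p^\delta(t,x,\cdot)$. The paper instead obtains absolute continuity from the uniform bound via Banach--Alaoglu. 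Your estimate $|\int\phi\, d(T^\delta_t\delta_x)|\le C't^{-d/2}\|\phi\|_1$ for continuous $\phi$ would give absolute continuity anyway, so this is not a gap.
\end{itemize}
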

\begin{proof}
 Denote by $S(t)= e^{\frac{t}{2}\Delta}$ the heat semigroup and let $g^\delta := b+ \delta \eta .$ Further, set
 $
 K:= \|  b \|_\infty + \| \eta \|_\infty,$ in such a way that $\|g^{\delta}\|_{\infty} \leq K$ for any $\delta \in [0,1).$ Moreover, let $f_n \in C^\infty(\T^d)$ such that $\|f_n -f \|_1 \to 0$ as $n \to \infty$ and $\| f_n \|_1 \leq \| f \|_1.$ Recall that, since $f_n\in C^\infty(\T^d)\subset C(\T^d)$, the mild solution $u_n (t):= T_t^{\delta} f_n$ is given by
 $$
 u_n (t) = S(t) f_n - \int_0^t \nabla S (t-s) \cdot (g^{\delta} u_n (s)) \, ds,
 $$
 and it is well posed in $C([0,T]; C (\T^d))$ (as can be checked, for instance, by a fixed point argument).

\textit{Step 1: $ \|  T_t^{\delta}f_n \|_\infty \leq C' t^{-d/2} \| f \|_1$, with $C'$ independent of $n$.} Fix $ n \in \mathbb{N}$ and $\delta \in [0,1).$ Let $u_n(t) :=T_t^{\delta}f_n$ as above. Then
    $$
    u_n(t)= S(t) f_n - \int_0^t \nabla S(t-s) \cdot (g^\delta u_n(s)) \, ds,
    $$
    where the previous identity is understood in the sense of distributions (see Lemma \ref{lemma: heat semigroup divergence}), since $u_n(s) \in L^1$ thanks to Lemma \ref{lemma: contraction on L1}. Considering the $L^\infty $-norm in the previous equation, applying the triangle inequality, Lemma \ref{lemma: smoothing estimates heat semigroup}(i), Lemma \ref{lemma: heat semigroup divergence}, the fact that $\|f_n \|_1 \leq \| f \|_1,$ and H\"older inequality,
    $$
    \| u_n(t) \|_\infty \leq \| S(t) f_n \|_\infty + \int_0^t \| \nabla S (t-s) \cdot (g^\delta u_n(s)) \|_\infty \, ds \leq c_d t^{-d/2} \| f \|_1+ I_1(t) + I_2(t),
    $$
    with
    $$
    I_1(t) := \int_0^{t/2} \| \nabla S(t-s) \cdot (g^{\delta} u_n(s)) \|_\infty \, ds, \qquad I_2(t):= \int_{t/2}^t \| \nabla S (t-s) \cdot (g^{\delta} u_n(s)) \|_\infty \, ds.
    $$
    For the term $I_1(t)$, by Lemma \ref{lemma: heat semigroup divergence}, H\"older inequality, the $L^1$ contraction from Lemma \ref{lemma: contraction on L1}, and the fact that $\|f_n \|_1 \leq \| f \|_1,$ we have
    \begin{equation}
        \begin{split}
            I_1 (t) &\leq c_d \int_0^{t/2} (t-s)^{-1/2 - d/2} \| g^{\delta} u_n(s) \|_1 \, ds \leq c_d K \int_0^{t/2} (t-s)^{-1/2-d/2} \| u_n(s) \|_1\, ds \\
            & \leq c_d K \| f_n \|_1 \int_0^{t/2} (t-s)^{-1/2-d/2}\, ds = C K \| f_n \|_1 t^{1/2-d/2} \leq C K \| f \|_1 t^{1/2-d/2}  , 
        \end{split}
        \label{eq: estimate term I_1 lemma Linfty_bound}
    \end{equation}
    where $C= C(d)>0$. For the term $I_2(t)$, using Lemma \ref{lemma: heat semigroup divergence}, we have
    \begin{equation}
 I_2(t) \leq C \int_{t/2}^t (t-s)^{-1/2} \| g^{\delta} u_n(s) \|_\infty \, ds \leq C K \int_{t/2}^t (t-s)^{-1/2} \| u_n(s) \|_\infty \, ds.
        \label{eq: estimate term I_2 lemma Linfty_bound}
    \end{equation}
    Consider now 
    $$
    M_n(t) = \sup_{0 < r \leq t} r^{d/2} \| u_n (r) \|_\infty, \qquad t \in (0,T]. 
    $$
    and note that $M_n (t)<\infty$ by the reasoning before the start of Step $1$. If $s \in [t/2, t]$, then
    $$
    \| u_n(s) \|_\infty \leq s^{-d/2} M_n(t) \leq 2^{d/2} t^{-d/2} M_n(t).
    $$
    Replacing this into \eqref{eq: estimate term I_2 lemma Linfty_bound}, we obtain
    $$
    I_2 (t) \leq C K 2^{d/2} t^{-d/2} M_n(t) \int_{t/2}^t (t-s) ^{-1/2}ds \leq C K t^{1/2 -d/2} M_n(t).
    $$
     Combining the previous estimates for $I_1(t)$ and $I_2(t)$, we obtain
    \begin{equation}
    \| u_n (t) \|_\infty \leq c_d t^{-d/2} \| f \|_1 + CK \| f\|_1 t^{1/2-d/2} +CK t^{1/2-d/2} M_n(t).
        \label{eq: un infty estimate integral}
    \end{equation}
    Multiplying \eqref{eq: un infty estimate integral} by $t^{d/2}$ and considering the supremum for $0\leq r \leq t$, we have
    \begin{equation}
    M_n(t) \leq C \| f\|_1 + C K \| f \|_1 t^{1/2} + C K t^{1/2} M_n(t).
        \label{eq: inequality for M(t)}
    \end{equation}
    Choose now $t_0 \in (0,T]$ small enough such that $C K t_0^{1/2}< 1/2$. Then, for any $t \in (0,t_0]$, from \eqref{eq: inequality for M(t)} it follows
    $$
    M_n(t) \leq C' \| f \|_1, \qquad t \in (0,t_0],
    $$
    for a new constant $C' = C'(\|b\|_\infty, \| \eta \|_\infty, d,T)>0.$ This, by definition of $M_n$, implies
    \begin{equation}
        \| u_n(t) \|_\infty \leq C' t^{-d/2} \| f \|_1, \qquad t \in (0,t_0].
        \label{eq: Linfty bound up to t0}
    \end{equation}
    It remains to deal with the case $t \in (t_0,T].$ So, fix $t \in (t_0,T].$ Using the semigroup property, it holds
    $$
    T_t^{\delta} f_n = T_{t_0 /2}^\delta  T_{t- t_0/2}^\delta    f_n.
    $$
    Considering the norm, using \eqref{eq: Linfty bound up to t0} and the $L^1$ contraction from Lemma \ref{lemma: contraction on L1}, we have
    $$
    \| T_t^{\delta}  f_n \|_\infty \leq C' t_0^{-d/2} \| T_{t- t_0/2}^{\delta}   f_n \|_1 \leq C' t_0^{-d/2} \| f_n \|_1 \leq C' t_0^{-d/2} \| f \|_1.
    $$
    Since $t \leq T$, we have $t_0^{-d/2} \leq (\tfrac{T}{t_0})^{d/2} t^{-d/2},$ and thus we obtain
    $$
    \| T_{t}^{\delta} f_n \|_\infty \leq C' t^{-d/2} \| f \|_1, \qquad t \in (0,T],
    $$
    for a new constant $C' = C'(\| b\|_\infty, \| \eta \|_\infty,d, T)>0.$ Together with \eqref{eq: Linfty bound up to t0}, this concludes Step $1.$

 \textit{Step 2: $ \|  T_{t}^{\delta} f \|_\infty \leq C' t^{-d/2} \| f \|_1$. } Using the $L^1$ contraction from Lemma \ref{lemma: contraction on L1}, $$
 \| u_n(t) - u(t) \|_1 =\|  T_{t}^{\delta} (f_n -f) \|_1 \leq \| f_n - f \|_1  \to 0, \qquad t \in (0,T].
 $$
Hence $u_n(t) \to u(t)$ in $L^1(\T^d)$. Up to a subsequence, $u_n (t) \to u(t)$ a.e. in $\T^d$. But by Step $1$, $\|u_n \|_\infty \leq C' t^{-d/2} \| f \|_1$. Hence, also $\|u(t) \|_\infty \leq C' t^{-d/2} \| f \|_1$, thanks to the almost sure convergence and the fact that $C'$ is independent of $n$.

    \textit{Step $3$:  $\|   T_{t}^{\delta}\delta_x \|_\infty \leq C' t^{-d/2} $.} Let $\mathcal{M}(\mathbb{T}^d)$ be the space of finite signed Borel measure $\mu$ with the total variation norm $\| \cdot \|_{TV}$, and recall that 
    $$
    \mathcal{M}(\mathbb{T}^d)= (C(\mathbb{T}^d))^*,
    $$
    where $C(\mathbb{T}^d):= \{f \colon \mathbb{T}^d \to \mathbb{R} \; \mid  \; f \text{ is continuous }\}$. Further, we recall that $\mu_n \xrightharpoonup{*} \mu$ in $\mathcal{M}(\mathbb{T}^d)$ if
    $$
    \int_{\mathbb{T}^d} \phi (x) \mu_n   (dx) \xrightarrow{n \to \infty } \int_{\mathbb{T}^d} \phi (x) \mu (dx), \qquad \forall \phi \in C(\mathbb{T}^d).
    $$
    Given $x \in \mathbb{T}^d$, we also recall that it is possible to approximate $\delta_x$ by mollification as follows. Let $\rho \colon \mathbb{R}^d \to [0,\infty)$ such that $\rho \in C^\infty_c(\mathbb{R}^d)$, $\mathrm{Supp}(\rho) \subset B(0,1)$ and $\int_{\mathbb{R}^d} \rho(x) dx = 1$. For any $\varepsilon>0$, set
    $$
    \rho^\varepsilon (z):= \varepsilon^{-d} \rho \left(\frac{z}{\varepsilon}
\right)    .$$
Define
$$
f^\varepsilon (y) = \rho^\varepsilon (y-x).
$$
It holds
$$
f^\varepsilon \in C^\infty(\mathbb{T}^d), \qquad \| f^\varepsilon \|_1 =1, \qquad  f^\varepsilon \xrightharpoonup{*} \delta_x \quad \text{ in } \mathcal{M}(\mathbb{T}^d).
$$
Further, the adjoint of $T_t^\delta $ is the Markov operator $P_t^\delta$ introduced in \eqref{eq: Markov operator appendix}. In particular, it holds
\begin{equation}
    \langle T_t^{\delta } \delta_x , \phi \rangle = ( P_t^{\delta}  \phi )  (x).
    \label{eq: semigroup applied to dirac delta}
\end{equation}

Define $u^\varepsilon(t):=T_t^{\delta}f^\varepsilon$. By \eqref{eq:Linfty_bound},
\begin{equation}
\|u^\varepsilon(t)\|_\infty \le C' t^{-d/2}\qquad \forall\,t\in(0,T],\ \forall\,\varepsilon>0.
\label{eq:uniform_Linfty_mollified}
\end{equation}
Hence, for every $t\in(0,T]$ the family $\{u^\varepsilon(t)\}_{\varepsilon>0}$ is bounded in
$L^\infty(\T^d)$ by \eqref{eq:uniform_Linfty_mollified}. By Banach-Alaoglu there exist
$\varepsilon_n\downarrow0$ and $u(t)\in L^\infty(\T^d)$ such that
\begin{equation}
u^{\varepsilon_n}(t)\xrightharpoonup{*} u(t)\qquad \text{in }L^\infty(\T^d).
\label{eq:Linfty_weakstar_limit}
\end{equation}
In particular, for every $\phi\in L^1(\T^d)$,
\begin{equation}
\int_{\T^d}\phi(y)\,u^{\varepsilon_n}(t,y)\,dy \longrightarrow \int_{\T^d}\phi(y)\,u(t,y)\,dy.
\label{eq:pairing_limit_L1}
\end{equation}
On the other hand, for $\phi\in C(\T^d)$ we have, by duality,
$$
\int_{\T^d}\phi(y) u^{\varepsilon}(t,y)\,dy
=\langle f^\varepsilon, P_t^{\delta}\phi\rangle.
$$
Since $f^\varepsilon \xrightharpoonup{*}\delta_x$ in $\mathcal{M}(\T^d)$ and
$ P_t^{\delta}\phi\in C(\T^d)$, letting $\varepsilon\to0$ yields
$$
\lim_{\varepsilon\to0}\int_{\T^d}\phi(y)\,u^{\varepsilon}(t,y)\,dy
=(P_t^{\delta}\phi)(x) =\langle T_t^{\delta}\delta_x,\phi\rangle.
$$
Combining this
with \eqref{eq:pairing_limit_L1} (along the subsequence $\varepsilon_n$) gives
$$
\int_{\T^d}\phi(y) u(t,y)\,dy = \langle T_t^{\delta}\delta_x,\phi \rangle,
\qquad \forall\,\phi\in C(\T^d).
$$
Therefore the measure $T_t^{\delta}\delta_x$ is absolutely continuous with respect to
Lebesgue measure and has density $u(t)\in L^\infty(\T^d)$, i.e.
$$
T_t^{\delta}\delta_x = u(t,\cdot)\,dy.
$$
Finally, by weak-$*$ lower semicontinuity of the $L^\infty$-norm and
\eqref{eq:uniform_Linfty_mollified},
$$
\| T_t^{\delta}\delta_x\|_\infty = \|u(t)\|_\infty
\leq \liminf_{n\to\infty}\|u^{\varepsilon_n}(t)\|_\infty
\leq C' t^{-d/2}.
$$

\end{proof}

We now derive, by interpolation, a smoothing estimate for $\|T_t^{\delta} f \|_2$.
\begin{proposition}
\label{prop: properties semigroup}
    Let $b, \eta \colon \mathbb{T}^d \to \mathbb{R}^d$, $T>0$ and assume $b, \eta \in L^\infty(\T^d; \mathbb{R}^d).$ Consider the Fokker-Planck semigroup $(T_t^{\delta})_t$ introduced in \eqref{eq: FPE semigroup appendix}. Then there exists $c = c( \|b \|_{\infty}, \| \eta \|_{\infty}, d, T)>0$ such that for any $t \in (0,T]$ and $\delta \in [0,1)$
    $$
    \| T_t^{\delta}  f\|_2\le c\,t^{-d/4}\|f\|_1.
    $$
\end{proposition}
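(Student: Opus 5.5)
The plan is to interpolate between the two endpoint estimates already proved for the forward Fokker--Planck semigroup: the $L^1$ contraction of Lemma \ref{lemma: contraction on L1} and the $L^1\to L^\infty$ smoothing bound of Proposition \ref{prop:Linfty_bound_density}. Fix $\delta\in[0,1)$, $t\in(0,T]$ and $f\in L^1(\T^d)$, and set $u:=T_t^\delta f$. The elementary H\"older interpolation inequality on the probability space $\T^d$ gives
$$
\|u\|_2^2=\int_{\T^d}|u|^2\,dy\le \|u\|_\infty\,\|u\|_1 ,
\qquad\text{that is}\qquad
\|u\|_2\le \|u\|_\infty^{1/2}\|u\|_1^{1/2}.
$$

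Next insert the two endpoint bounds. Lemma \ref{lemma: contraction on L1} gives $\|u\|_1\le\|f\|_1$, and \eqref{eq:Linfty_bound} gives $\|u\|_\infty\le C' t^{-d/2}\|f\|_1$, with $C'=C'(\|b\|_\infty,\|\eta\|_\infty,d,T)$ independent of $\delta$. Multiplying and taking square roots yields
$$
\|T_t^\delta f\|_2\le (C')^{1/2}\,t^{-d/4}\,\|f\|_1 ,
$$
so the claim holds with $c:=(C')^{1/2}$. This constant depends only on $\|b\|_\infty,\|\eta\|_\infty,d,T$, and the bound is uniform in $\delta\in[0,1)$ and $t\in(0,T]$.

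There is no serious obstacle, since both endpoint estimates are already available with the required uniformity. The only point to check is that $u$ lies in $L^1\cap L^\infty$, so that the interpolation is meaningful. This is guaranteed by Step 2 of the proof of Proposition \ref{prop:Linfty_bound_density}, which establishes the $L^\infty$ bound for general $f\in L^1$ and not only for smooth approximants.

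The same argument applies verbatim to $f$ replaced by $\delta_x$. Here one uses $\|T_t^\delta\delta_x\|_1=1$ (the image is a probability density) together with the bound $\|T_t^\delta\delta_x\|_\infty\le C't^{-d/2}$ from Step 3 of that proposition. This gives \eqref{eq: kernel estimate H1} as a corollary.
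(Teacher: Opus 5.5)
Your proof is correct and is exactly the paper's argument: the H\"older interpolation $\|u\|_2\le\|u\|_\infty^{1/2}\|u\|_1^{1/2}$, combined with the $L^1$ contraction and the $\delta$-uniform $L^1\to L^\infty$ smoothing bound, gives the claim with $c=(C')^{1/2}$. Your closing remark is also valid: applying the same interpolation directly to $T_t^\delta\delta_x$, using its unit mass and the $L^\infty$ bound from Step 3 of that proposition, is a shorter route to the Dirac-delta $L^2$ estimate than the weak-compactness argument the paper uses in the subsequent corollary.
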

\begin{proof}
By H\"older inequality and using Proposition \ref{prop:Linfty_bound_density}
$$
\|  T_t^{\delta} f \|_2 \leq \|T_t^{\delta} f \|_\infty ^{1/2}\| T_t^{\delta} f \|_1 ^{1/2}  \leq (C' t^{-d/2} \| f \|_1)^{1/2} \| f \|_1^{1/2} = c t^{-d/4} \| f \|_1,
$$
for a constant $c = c( \|b \|_{\infty}, \| \eta \|_{\infty}, d, T)>0$.

\end{proof}
The estimate above is a bound on the transition density associated with $T_t^\delta$ starting from an initial condition $f \in L^1(\T^d).$ The next result states the previous estimate for the case of deterministic initial condition $x \in \mathbb{T}^d,$ which is equivalent in having $\delta_x$ as initial condition to the Fokker-Planck equation.
\begin{corollary}
\label{cor: semigroup estimates for dirac delta initial condition}
    Under the assumptions of Proposition \ref{prop: properties semigroup}, there exist
    $$
    c = c( \| b \|_{{\infty}},  \| \eta  \|_{{\infty}}, d, T)>0,
    $$
    such that for any $x \in \mathbb{T}^d$, $\delta \in [0,1)$ and $t \in (0,T)$
    it holds
       $$
        \| T_t^{\delta} \delta_x \|_2 \leq c t^{-d/4},
    $$   
\end{corollary}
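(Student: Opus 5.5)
The plan is to obtain the bound from Proposition~\ref{prop:Linfty_bound_density} (Step~3) together with mass conservation, by the same H\"older interpolation used in Proposition~\ref{prop: properties semigroup}. Step~3 of Proposition~\ref{prop:Linfty_bound_density} shows that for every $x\in\T^d$, $t\in(0,T]$ and $\delta\in[0,1)$ the measure $T_t^\delta\delta_x$ is absolutely continuous, with density $u(t)\in L^\infty(\T^d)$ satisfying $\|u(t)\|_\infty\le C't^{-d/2}$, where $C'=C'(\|b\|_\infty,\|\eta\|_\infty,d,T)$ does not depend on $x$ or $\delta$. It remains to control $\|u(t)\|_1$.

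For the $L^1$ norm, note first that $u(t)\ge0$ a.e. Indeed, pairing with any $\phi\in C(\T^d)$, $\phi\ge0$, gives
$$\int\phi\,u(t)\,dy=(P_t^\delta\phi)(x)=\mathbb{E}[\phi(X_t^{\delta,x})]\ge0.$$
Choosing $\phi\equiv1$ in the same identity gives $\int u(t)\,dy=1$. Hence $\|u(t)\|_1=1$. Alternatively, since each $u^{\varepsilon_n}(t)=T_t^\delta f^{\varepsilon_n}$ has $L^1$ norm at most $1$ by Lemma~\ref{lemma: contraction on L1}, weak-$*$ lower semicontinuity of the total variation norm would also give $\|u(t)\|_1\le1$, which suffices.

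Then H\"older's inequality gives
$$\|T_t^\delta\delta_x\|_2\le\|u(t)\|_\infty^{1/2}\,\|u(t)\|_1^{1/2}\le (C')^{1/2}\,t^{-d/4}.$$
So the claim holds with $c=(C')^{1/2}$, which is uniform in $x$ and $\delta$.

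I do not expect a real obstacle. The only delicate point is justifying that the density of $T_t^\delta\delta_x$ has unit mass (or mass at most one), and the duality identity \eqref{eq: semigroup applied to dirac delta} already handles this.
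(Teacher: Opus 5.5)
Your argument is correct, but it is organised differently from the paper's. You pass to the limit first and interpolate afterwards. Step~3 of Proposition~\ref{prop:Linfty_bound_density} already identifies $T_t^\delta\delta_x$ with a density $u(t)$ satisfying $\|u(t)\|_\infty\le C't^{-d/2}$. You combine this with $\|u(t)\|_1=1$, obtained from the duality identity tested with $\phi\ge 0$ and with $\phi\equiv 1$, and conclude via $\|u\|_2^2\le\|u\|_\infty\|u\|_1$.

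The paper instead interpolates first, at the level of the mollified data. It applies Proposition~\ref{prop: properties semigroup} to $f^\varepsilon$, which gives $\|T_t^\delta f^\varepsilon\|_2\le c\,t^{-d/4}$ uniformly in $\varepsilon$. It then extracts a weak $L^2$ limit and identifies it with $T_t^\delta\delta_x$ through $\langle T_t^\delta\delta_x,\phi\rangle=(P_t^\delta\phi)(x)$. It concludes by weak lower semicontinuity of the $L^2$ norm.

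Your route is shorter, because it reuses the approximation argument already carried out in Step~3 rather than repeating it with $L^2$ in place of $L^\infty$. Its only extra cost is a mass bound for the limiting density. You justify this correctly, either through the probabilistic interpretation $\int \phi\,u(t)\,dy=\mathbb{E}[\phi(X_t^{\delta,x})]$ or through weak-$*$ lower semicontinuity of the total variation norm. The paper's version never needs to discuss the $L^1$ norm of the limit, since mass enters only through $\|f^\varepsilon\|_1=1$.

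Both proofs use the same underlying ingredients: the uniform $L^\infty$ smoothing bound, the $L^1$ contraction, and duality with the continuous function $P_t^\delta\phi$. Both yield the same constant $c=(C')^{1/2}$.
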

\begin{proof}
    Let $\mathcal{M}(\mathbb{T}^d)$ be the space of finite signed Borel measure $\mu$ with the total variation norm $\| \cdot \|_{TV}$, and recall that 
    $$
    \mathcal{M}(\mathbb{T}^d)= (C(\mathbb{T}^d))^*,
    $$
    where $C(\mathbb{T}^d):= \{f \colon \mathbb{T}^d \to \mathbb{R} \; \mid  \; f \text{ is continuous }\}$. Further, we recall that $\mu_n \xrightharpoonup{*} \mu$ in $\mathcal{M}(\mathbb{T}^d)$ if
    $$
    \int_{\mathbb{T}^d} \phi (x) \mu_n   (dx) \xrightarrow{n \to \infty } \int_{\mathbb{T}^d} \phi (x) \mu (dx), \qquad \forall \phi \in C(\mathbb{T}^d).
    $$
    Given $x \in \mathbb{T}^d$, we also recall that it is possible to approximate $\delta_x$ by mollification as follows. Let $\rho \colon \mathbb{R}^d \to [0,\infty)$ such that $\rho \in C^\infty_c(\mathbb{R}^d)$, $\mathrm{Supp}(\rho) \subset B(0,1)$ and $\int_{\mathbb{R}^d} \rho(x) dx = 1$. For any $\varepsilon>0$, set
    $$
    \rho^\varepsilon (z):= \varepsilon^{-d} \rho \left(\frac{z}{\varepsilon}
\right)    .$$
Define
$
f^\varepsilon (y) = \rho^\varepsilon (y-x).$ It holds
$$
f^\varepsilon \in C^\infty(\mathbb{T}^d), \qquad \| f^\varepsilon \|_1 =1, \qquad  f^\varepsilon \xrightharpoonup{*} \delta_x \quad \text{ in } \mathcal{M}(\mathbb{T}^d).
$$
Further, the adjoint of $T_t^{\delta} $ is the Markov operator $P_t^\delta$ introduced in \eqref{eq: Markov operator appendix}. In particular, for $ \mu = \delta_x \in \mathcal{M}(\mathbb{T}^d)$, it holds
\begin{equation}
    \langle T_t^{\delta}\delta_x , \phi \rangle =(P_t^{\delta}  \phi )  (x).
    \label{eq: semigroup applied to dirac delta2}
\end{equation}

 Let $u^\varepsilon (t) :=  T_t^{\delta} f^\varepsilon$. By Proposition \ref{prop: properties semigroup}, it satisfies
\begin{equation}
    \| u^\varepsilon(t) \|_2 = \| T_t^{\delta} f^\varepsilon \|_2 \leq c t^{-d/4} \| f^\varepsilon \|_1 = c t^{-d/4},
    \label{eq: uniform bound of u epsilon in L2}
\end{equation}
where $c = c( \| b \|_{{\infty}},  \| \eta  \|_{{\infty}}, d, T)>0$ is the constant given by Proposition \ref{prop: properties semigroup}. By \eqref{eq: uniform bound of u epsilon in L2}, we get that there exists $u(t) \in L^2(\mathbb{T}^d)$ such that
$$
u^\varepsilon (t) \xrightharpoonup{} u(t) \quad \text{ in } L^2(\mathbb{T}^d).
$$

We now verify that $ u(t) = T_t^{\delta} \delta_x$. Indeed, using the adjoint operator,
$$
\int_{\mathbb{T}^d} u^\varepsilon (t,y) \phi (y) dy = \int_{\mathbb{T}^d} f^\varepsilon (y) (P_t^{\delta } \phi ) (y) dy \qquad \forall \phi \in C (\mathbb{T}^d).
$$
Taking in both sides the limit for $\varepsilon \to 0$, and using $f^\varepsilon \xrightharpoonup{ *} \delta_x$, we get
$$
\int_{\mathbb{T}^d} u(t,y) \phi (y) dy = ( P_t^{\delta} \phi )(x).
$$
Hence $u(t) = T_t^{\delta} \delta_x$ thanks to \eqref{eq: semigroup applied to dirac delta2}. In conclusion, since the norm is lower-semicontinuous with respect to the weak convergence in $L^2$, it yields
$$
\| T_t^{\delta} \delta_x \|_2 = \| u(t) \|_2 \leq \liminf_{ \varepsilon \to 0 } \| u^\varepsilon (t) \|_2 \leq C t^{-d/4},
$$
where in the last inequality we have used \eqref{eq: uniform bound of u epsilon in L2}.
\end{proof}

\subsubsection{Positivity of the kernel}
We now show that, for every fixed positive time, the density $p^{\delta}(t,x,y)$ associated to the SDE \eqref{eq: SDE in Td} is uniformly strictly positive for $(x,y) \in \T^d\times\T^d$, uniformly with respect to $\delta\in[0,1)$.
\begin{proposition}
    Let $T>0$ and $t  \in (0,T]$. Assume $b, \eta \in L^\infty (\T^d; \mathbb{R}^d).$ Then there exists $c = c(t, \| b \|_\infty, \| \eta \|_\infty, d)>0$ such that
    $$
    \inf_{(x,y) \in \T^d \times \T^d} p^\delta (t,x,y) \geq c >0, \qquad \delta \in[0,1).
    $$
    \label{prop: kernel positivity}
\end{proposition}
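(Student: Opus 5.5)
The plan is to prove the lower bound on the lifted density $\tilde p^\delta$ on $\mathbb{R}^d$ and then pass to the torus through the periodisation formula \eqref{eq: density on Td}. Set $g^\delta:=b+\delta\eta$ and $K:=\|b\|_\infty+\|\eta\|_\infty$, so that $\|g^\delta\|_\infty\le K$ for all $\delta\in[0,1)$. All constants below will depend only on $t,K,d$; in particular they will not depend on $\delta$, $x$ or $y$.

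\textbf{Step 1 (Girsanov).} Since $\tilde g^\delta$ is bounded, Novikov's condition holds. Girsanov's theorem then gives, for every Borel $A\subset\mathbb{R}^d$,
\[
\mathbb{P}(\tilde X_t^{\delta,\tilde x}\in A)=\mathbb{E}\big[\mathds{1}_A(\tilde x+W_t)\,\mathcal{E}_t\big],
\]
where
\[
\mathcal{E}_t=\exp\Big(\int_0^t \tilde g^\delta(\tilde x+W_s)\cdot dW_s-\tfrac12\int_0^t|\tilde g^\delta(\tilde x+W_s)|^2ds\Big).
\]
Disintegrating with respect to the endpoint $W_t$ yields
\[
\tilde p^\delta(t,\tilde x,\tilde y)=G_t(\tilde y-\tilde x)\,\mathbb{E}^{\tilde x\to\tilde y}_t[\mathcal{E}_t]
\]
for a.e. $\tilde y$. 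Here $G_t$ is the Gaussian kernel from the proof of Lemma \ref{lemma: smoothing estimates heat semigroup}, and $\mathbb{E}^{\tilde x\to\tilde y}_t$ denotes expectation with respect to the law of the Brownian bridge from $\tilde x$ to $\tilde y$ on $[0,t]$.

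\textbf{Step 2 (Jensen).} Jensen's inequality gives $\mathbb{E}^{\tilde x\to\tilde y}_t[\mathcal{E}_t]\ge \exp\big(\mathbb{E}^{\tilde x\to\tilde y}_t[\log\mathcal{E}_t]\big)$. The quadratic term in $\log\mathcal{E}_t$ is bounded below by $-K^2t/2$. For the stochastic integral, recall that under the bridge law the path $B$ solves
\[
dB_s=d\beta_s+\frac{\tilde y-B_s}{t-s}\,ds,
\]
with $\beta$ a Brownian motion. Hence $\int_0^{t-\varepsilon}\tilde g^\delta\cdot dB$ is the sum of a true martingale with zero mean and the drift integral $\int_0^{t-\varepsilon}\tilde g^\delta(B_s)\cdot\frac{\tilde y-B_s}{t-s}\,ds$. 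For the bridge one has $\mathbb{E}|\tilde y-B_s|\le |\tilde y-\tilde x|\frac{t-s}{t}+C_d\sqrt{t-s}$. This yields
\[
\Big|\mathbb{E}^{\tilde x\to\tilde y}_t\Big[\int_0^{t}\tilde g^\delta\cdot dB\Big]\Big|\le K\Big(|\tilde y-\tilde x|+C_d\int_0^t (t-s)^{-1/2}ds\Big)=K|\tilde y-\tilde x|+2C_dK\sqrt t,
\]
after letting $\varepsilon\downarrow0$ by $L^2$ (or $L^1$) convergence. Combining, we obtain
\[
\tilde p^\delta(t,\tilde x,\tilde y)\ge G_t(\tilde y-\tilde x)\exp\big(-K|\tilde y-\tilde x|-2C_dK\sqrt t-K^2t/2\big).
\]

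\textbf{Step 3 (periodisation).} Every term in \eqref{eq: density on Td} is nonnegative, so $p^\delta(t,x,y)$ is bounded below by any single term. Given representatives $\tilde x,\tilde y$, choose $k\in\Z^d$ with $|\tilde y+k-\tilde x|\le\sqrt d$. Then Step 2 gives
\[
p^\delta(t,x,y)\ge (2\pi t)^{-d/2}e^{-d/(2t)}e^{-K\sqrt d-2C_dK\sqrt t-K^2t/2}=:c(t,K,d)>0 .
\]
This bound is uniform in $x,y$ and $\delta$. If a pointwise (rather than a.e.) statement is wanted, note that $p^\delta(t,x,\cdot)$ has a continuous version, or else interpret the infimum as an essential infimum.

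\textbf{Main obstacle.} The delicate step is the rigorous justification of the bridge disintegration together with the expectation of the stochastic integral under the bridge. The difficulty is that the bridge drift $(\tilde y-B_s)/(t-s)$ is singular at $s=t$. I would handle this by working on $[0,t-\varepsilon]$, where the bridge is absolutely continuous with respect to Brownian motion and everything is classical. The pieces then pass to the limit because the singularity is only of order $(t-s)^{-1/2}$, which is integrable.

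If this route proves too technical, the fallback is a purely analytic argument. First, the Duhamel formula \eqref{eq:mild-representation-appendix} together with Lemma \ref{lemma: heat semigroup divergence} and the bound of Proposition \ref{prop:Linfty_bound_density} shows that $T^\delta_s\delta_x$ is close in $L^1$ to the heat kernel for small $s$, which gives a mass lower bound on balls. Then the Chapman–Kolmogorov identity $p^\delta(t)=\int p^\delta(s,x,z)\,p^\delta(t-s,z,y)\,dz$ propagates this local positivity into a global lower bound.
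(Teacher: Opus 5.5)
Your argument is correct, and it is a genuinely different proof from the paper's.

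\textbf{The paper's route.} It stays on the PDE side. It replaces $\delta_x$ by a mollification $f^\varepsilon$ and applies the parabolic Harnack inequality on the compact torus to get $\sup_y p^{\delta,x,\varepsilon}(t/2,y)\le\alpha\inf_y p^{\delta,x,\varepsilon}(t,y)$. Conservation of mass then forces the supremum at time $t/2$ to be at least $1$. The mollification is removed by duality with the Kolmogorov semigroup, using the strong Feller property. Continuity of $y\mapsto p^\delta(t,x,y)$ upgrades the a.e.\ bound to a pointwise one.

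\textbf{Your route.} You work on the lift with Girsanov, the Brownian-bridge disintegration and Jensen. This gives, directly for the Dirac datum, the Gaussian lower bound $\tilde p^\delta(t,\tilde x,\tilde y)\ge G_t(\tilde y-\tilde x)\,e^{-K|\tilde y-\tilde x|-2C_dK\sqrt t-K^2t/2}$ on all of $\mathbb{R}^d$, and you then keep a single periodic image.

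\textbf{What each buys.} Your approach gives an explicit constant, whereas the paper's $\alpha$ is an unquantified Harnack constant. It also gives a whole-space estimate stronger than needed, and it needs no approximation of the initial datum. The price is the stochastic-calculus bookkeeping you flag, and your truncation at $t-\varepsilon$ does handle it. The cleanest way to phrase it: the conditional expectations of $\int_0^{t-\varepsilon}\tilde g^\delta(\tilde x+W_s)\cdot dW_s$ given the endpoint are bounded by $K|\tilde y-\tilde x|+2C_dK\sqrt t$ uniformly in $\varepsilon$, and they converge in $L^1$ as $\varepsilon\downarrow 0$. The paper's approach instead reuses the appendix's PDE machinery and exploits compactness of $\T^d$ through the mass argument, at the cost of using Harnack as a black box. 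Both proofs end with an a.e.\ bound and need continuity of the density (or an essential infimum) for the pointwise statement.

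\textbf{A caveat on your fallback.} $L^1$ closeness to the heat kernel only yields mass lower bounds on balls, and Chapman--Kolmogorov cannot turn those into a pointwise density bound. You would need the $L^\infty$ comparison $\|T_s^\delta\delta_x-S(s)\delta_x\|_\infty\le CKs^{(1-d)/2}$. This follows from the Duhamel splitting at $s/2$, as in Proposition~\ref{prop:Linfty_bound_density}. It gives $p^\delta(s,x,z)\ge c\,s^{-d/2}$ for $|z-x|\le\sqrt s$ and small $s$, which can then be chained.
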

\begin{proof}
    Let $x \in \T^d$. As in the proof of Corollary \ref{cor: semigroup estimates for dirac delta initial condition}, let, for $\varepsilon>0$, $f^\varepsilon (z) := \varepsilon^{-d}\rho  (\frac{z-x}{\varepsilon} )$, with $\rho \colon \mathbb{R}^d \to [0,\infty)$ such that $\rho \in C^\infty_c (\mathbb{R}^d)$, $\mathrm{Supp}(\rho) \subset B(0,1)$ and $\int_{\mathbb{R}^d} \rho (z) dz = 1$. Then
    $$
    f^\varepsilon \in C^\infty (\T^d), \qquad \| f^\varepsilon \|_1 = 1, \qquad f^\varepsilon \xrightharpoonup{*} \delta_x \quad \text{ in } \mathcal{M}(\T^d).
    $$
    Let $p^{\delta,x,\varepsilon}(t,y) = p^{\delta, \varepsilon}(t,x,y)$ be the solution of the regularised FPE
    \begin{equation}
    \left \lbrace
    \begin{aligned}
                \partial_t p^{\delta, x, \varepsilon} &=  \frac{1}{2}\Delta_y  p^{\delta, x, \varepsilon} - \mathrm{div}_y(g^{\delta} p^{\delta,x,\varepsilon}), \qquad &\text{ in } (0,T) \times \T^d,\\
                p^{\delta, x, \varepsilon}|_{t = 0} & = f^{\varepsilon},  \qquad & \text{ in } (0,T) \times \T^d.
    \end{aligned}
    \right.
        \label{eq: mollified FPE}
    \end{equation}
    First, observe that, since $f^\varepsilon \in C^\infty$, $f^\varepsilon \geq 0$, and $b, \eta \in L^{\infty }(\T^d;\mathbb{R}^d)$, then there exists a unique solution (see \cite[Theorem 9.4.8]{Bogachev2022})  $p^{\delta,x, \varepsilon}$, it is locally H\"older continuous on $(0,T) \times \T^d$, see \cite[Corollary 6.4.3]{Bogachev2022}, and $p^{\delta,x, \varepsilon} \geq 0$ on $(0,T)\times \T^d$, see \cite[Theorem 6.6.4]{Bogachev2022}.
    
    Second, by Harnack inequality, see \cite[Corollary 7.42]{Lieberman}, there exists $\alpha = \alpha(t, \| b \|_\infty, \| \eta \|_\infty) >0$ such that
    \begin{equation}
    \sup_{ y \in \T^d} p^{\delta, x, \varepsilon} ( \frac{t}{2}, y) \leq \alpha  \inf_{y \in \T^d} p^{\delta, x, \varepsilon}(t, y), \qquad t \in (0,T).
        \label{eq: inequality consequence Harnack}
    \end{equation}
    Since $y \mapsto p^{\delta, x, \varepsilon}( t, y)$ is continuous thanks to the regularity of $p^{\delta, x, \varepsilon}$, then $$\sup_{ y \in \T^d} p^{\delta, x, \varepsilon} (\frac{t}{2}, y) = \max_{ y \in \T^d} p^{\delta, x, \varepsilon} (\frac{t}{2}, y)=: M_t.
    $$
    We already know that $M_t \geq 0$, since $p^{\delta,x, \varepsilon} \geq 0$. But further, it also holds that $M_t \geq 1$. Indeed, if by contradiction $M_t <1$, then
    $$\int_{\T^d} p^{\delta, x, \varepsilon}( \frac{t}{2},y) \, dy < M_t \int_{\T^d} 1 \, dy = M_t,
$$    
     which again is a contradiction of the preservation of the probability mass. Hence, we have obtained
     \begin{equation*}
     1 \leq M_t \leq  \alpha \inf_{y \in \T^d} p^{\delta, x, \varepsilon}(t,y), \qquad t \in (0,T),
     \end{equation*}
     which gives
     \begin{equation}
     0<\alpha^{-1} \leq \inf_{y \in \T^d} p^{\delta, x, \varepsilon}(t,y), \qquad t \in (0,T).
     \label{eq: positive inf density}
     \end{equation}
     It remains to pass to the limit as  $\varepsilon \to 0$. We do this weakly. Let  $ \varphi \in C(\T^d)$, $\varphi \geq 0$. From
\eqref{eq: positive inf density}, we have
$$
\int_{\T^d}\varphi(y)p^{\delta,x,\varepsilon}(t,y) \, dy \geq
c\int_{\T^d}\varphi(y)\,dy,
$$
where $c>0$ is independent of \(x\), \(\varepsilon\), and
$\delta\in[0,1)$.

On the other hand, by duality between the Fokker-Planck semigroup and the Kolmogorov semigroup
$$
\int_{\T^d}\varphi(y)p^{\delta,x,\varepsilon}(t,y) \, dy =
\int_{\T^d} P_t^\delta\varphi(z) f^\varepsilon(z) \, dz,
$$
where $ P_t^\delta\varphi(z):= \int_{\T^d}\varphi(y)p^\delta(t,z,y)\,dy .$
Since $f^\varepsilon\xrightharpoonup{*}\delta_x$ in
$\mathcal M(\T^d)$ and since $z\mapsto P_t^\delta\varphi(z)$ is
continuous\footnote{The continuity of  $z \mapsto P_t^\delta\varphi(z)$ follows from the
strong Feller property of uniformly elliptic diffusions with bounded drift.} for every $t>0$, we obtain
$$
\int_{\T^d} P_t^\delta\varphi(z) f^\varepsilon(z)\,dz \longrightarrow
P_t^\delta\varphi(x) = \int_{\T^d} \varphi(y) p^\delta(t,x,y) \, dy .
$$
Passing to the limit in the previous inequality gives
$$
\int_{\T^d}\varphi(y)p^\delta(t,x,y)\,dy \geq c\int_{\T^d}\varphi(y)\,dy
$$
for every nonnegative $\varphi\in C(\T^d)$. Therefore
$$
p^\delta(t,x,\cdot)\geq c \qquad\text{a.e. on }\T^d .
$$
Finally, since $y\mapsto p^\delta(t,x,y)$ is continuous\footnote{Indeed, by \cite[Theorem 6.6.4 and Corollary 6.4.3]{Bogachev2022}, $p^\delta$ is a locally H\"older continuous density on $(0,T) \times \T^d$.} for every
\(t>0\), the inequality holds for every \(y\in\T^d\). Hence
$$
\inf_{(x,y)\in\T^d\times\T^d}p^\delta(t,x,y) \geq c,
$$
with $c$ independent of $\delta\in[0,1)$.
\end{proof}

\subsubsection{Regularity for the Fokker-Planck equation}
\label{subsection: where the main prop on SDE is proved}
We now apply the semigroup bounds to the Fokker-Planck equation with singular initial datum $\delta_x$.
The first goal is to quantify the spatial regularity of $p^\delta(t,\cdot)$ for fixed $t>0$.
The second goal is perturbative: we compare $p^\delta$ to $p^0$ and obtain explicit rates in $\delta$, which later serve as a quantitative differentiability statement with respect to the parameter.

As in the previous section, let $b, \eta \colon \mathbb{T}^d \to \mathbb{R}^d$. For $\delta \in [0,1)$,  set $g^{\delta}:= b+ \delta \eta$. Given $x \in \mathbb{T}^d$, we consider the Fokker-Planck equation
\begin{equation}
\left \lbrace
\begin{aligned}
    \partial_t p^\delta &= \frac{1}{2}\Delta_y p^\delta - \mathrm{div}(p^\delta g^\delta), \quad &\text{ in } (0,T) \times \mathbb{T}^d,\\
    p^\delta|_{t = 0} & = \delta_x, &\text{ in }  \mathbb{T}^d
\end{aligned}
\right.
\label{eq: FPE p delta}
\end{equation}
Consider then $w^\delta := p^\delta - p^0$, which solves
\begin{equation*}
\left \lbrace
\begin{aligned}
        \partial_t w^\delta &= \frac{1}{2}\Delta w^\delta - \mathrm{div}(b w^\delta) - \delta \mathrm{div} (\eta p^\delta), \quad & \text{ in } (0,T)\times \mathbb{T}^d, \\
        w^\delta_{|t= 0} & = 0 , \quad &\text{ in } \mathbb{T}^d.
\end{aligned}
\right.
\end{equation*}
The difference $w^\delta$ isolates the effect of the perturbation $\delta\eta$ in the drift.
Its equation has zero initial condition and a forcing term proportional to $\delta$. This structure is crucial: it allows us to estimate $w^\delta$ by combining the heat-semigroup smoothing estimates with the integral formulation of the equation and the uniform bounds on $p^\delta$ proved in the previous section.

We first prove an $L^1$ estimate for $w^\delta$, which already exhibits the linear dependence on $\delta$ and will serve as an input for the stronger $L^2$ bound proved later.
\begin{lemma}
\label{lemma: L1 estimate w delta}
Assume $b,\eta \in L^\infty(\T^d;\mathbb{R}^d)$ and let $T>0$. Fix $x\in \T^d$, and for $\delta\in[0,1)$ let
$$
p^\delta(t):=p^\delta(t,x,\cdot), \qquad w^\delta(t):=p^\delta(t)-p^0(t), \qquad t>0.
$$
Then there exists $ C=C(\|b\|_\infty,\|\eta\|_\infty,d,T)>0
$
such that for every $t\in(0,T]$ and every $\delta\in[0,1)$ it holds
$$ \|w^\delta(t)\|_1\leq C\,\delta\, t^{1/2}.
$$
\end{lemma}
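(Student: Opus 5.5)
We work with the mild (Duhamel) formulation of the equation for $w^\delta$, using the heat semigroup $S(t)=e^{\frac t2\Delta}$ and moving all divergences onto $S$ through Lemma \ref{lemma: heat semigroup divergence}. Since $w^\delta(0)=0$, the representation reads
\begin{equation*}
w^\delta(t)=-\int_0^t \nabla S(t-s)\cdot\big(b\,w^\delta(s)\big)\,ds-\delta\int_0^t \nabla S(t-s)\cdot\big(\eta\,p^\delta(s)\big)\,ds .
\end{equation*}
The initial datum $\delta_x$ is rough, so we first justify this identity for the mollified data $f^\varepsilon$ used in Proposition \ref{prop:Linfty_bound_density}. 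For these data both $p^{\delta,x,\varepsilon}$ and $p^{0,x,\varepsilon}$ are regular, and the Duhamel formula for their difference follows from the mild representation \eqref{eq:mild-representation-appendix}. We then prove the estimate with constants independent of $\varepsilon$. At the end we pass to the limit $\varepsilon\to0$, using weak-$*$ convergence to $p^\delta(t,x,\cdot)$ (as in Corollary \ref{cor: semigroup estimates for dirac delta initial condition}) and the lower semicontinuity of the $L^1$ norm under this convergence, which can be tested against $\varphi\in C(\T^d)$ with $\|\varphi\|_\infty\le1$.

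Next we take $L^1$ norms. We apply Lemma \ref{lemma: heat semigroup divergence} with $p=q=1$, which gives the bound $\|S(t-s)\mathrm{div}\,G\|_1\le C(t-s)^{-1/2}\|G\|_1$. For the forcing term we use Lemma \ref{lemma: contraction on L1}, together with $\|f^\varepsilon\|_1=1$ (mass conservation for the Dirac limit), to get $\|\eta p^\delta(s)\|_1\le\|\eta\|_\infty$. This yields
\begin{equation*}
\|w^\delta(t)\|_1\le C\|b\|_\infty\int_0^t (t-s)^{-1/2}\|w^\delta(s)\|_1\,ds + C\delta\|\eta\|_\infty\int_0^t (t-s)^{-1/2}ds ,
\end{equation*}
and the last integral equals $2t^{1/2}$.

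The resulting inequality is of fractional Gr\"onwall type, but with a source term $A t^{1/2}$ that grows in $t$ rather than a singular $At^{-\beta}$. We can handle it in one of two ways. The first is to use monotonicity: replace $t^{1/2}$ by $T^{1/2}$ is too lossy, so instead we bound $\sup_{s\le t}s^{-1/2}\|w^\delta(s)\|_1=:N(t)$. Since $\int_0^t (t-s)^{-1/2}s^{1/2}ds=c\,t$, this gives $N(t)\le C\delta+C\|b\|_\infty t^{1/2}N(t)$, which closes for $t\le t_0$ with $t_0$ depending only on $\|b\|_\infty$ and $d$. The second is to iterate Lemma \ref{lemma: fractional gronwall lemma} after the substitution $u(t)=t^{-1/2}\|w^\delta(t)\|_1$.

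For $t\in(t_0,T]$ we argue by a finite bootstrap on intervals of length $t_0$, or directly. The directly route is to feed the bound $\|w^\delta(s)\|_1\le 2$, coming from mass conservation, into the Gr\"onwall inequality with a $T$-dependent constant. On $[t_0,T]$ we have $t^{1/2}\ge t_0^{1/2}$, so any bound of the form $C\delta$ can be converted into $C\delta t^{1/2}$. Applying Lemma \ref{lemma: fractional gronwall lemma} with $\beta=0$ and $A=C\delta T^{1/2}$ gives $\|w^\delta(t)\|_1\le C\delta$ on $(0,T]$, and this suffices for $t\ge t_0$. Finiteness of $N(t)$ for the mollified problem follows from the $L^1$ contraction.

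We expect the main obstacle to be justifying the mild formula and the limit $\varepsilon\to0$ for the Dirac datum. The estimates themselves are routine once the representation is available.
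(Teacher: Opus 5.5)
Your proposal is correct and follows essentially the paper's route: the Duhamel formula for $w^\delta$, the $L^1$ bound of Lemma~\ref{lemma: heat semigroup divergence} with $p=q=1$, the bound $\|p^\delta(s)\|_1\le 1$, and then a singular Gr\"onwall step. Your mollification and weak-$*$ lower semicontinuity argument adds a justification of the Duhamel formula for the Dirac datum that the paper simply asserts.

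The only real difference is the last step. The paper sets $v^\delta(t)=t^{-1/2}\|w^\delta(t)\|_1$, uses $t^{-1/2}s^{1/2}\le 1$, and applies Lemma~\ref{lemma: fractional gronwall lemma} with $\beta=0$, $A=C\delta$ on all of $(0,T]$ at once, so your split at $t_0$ is unnecessary. Also, boundedness of $t^{-1/2}\|w^\delta(t)\|_1$ near $0$ (your $N(t)<\infty$) does not follow from the $L^1$ contraction alone. You need to insert $\|w^\delta(s)\|_1\le 2$ once into the Duhamel inequality, which gives $\|w^\delta(t)\|_1\le Ct^{1/2}$.
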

\begin{proof}
Set $ g^\delta := b+\delta\eta$ and $K:=\|b\|_\infty+\|\eta\|_\infty.$ Observe that
$$
\|g^\delta\|_\infty\leq K, \qquad \forall\,\delta\in[0,1).
$$
We denote by $S(t)=e^{\frac{t}{2}\Delta}$ the heat semigroup on $\T^d$.

Interpreting in distributional sense the following identity thanks to Lemma \ref{lemma: heat semigroup divergence}, we have
\begin{equation*}
    \begin{split}
         w^\delta(t) &= - \int_0^t S(t-s) \mathrm{div}(b w^\delta(s))\, ds - \delta \int_0^t S(t-s) \mathrm{div}(\eta p^\delta(s)) \, ds \\
         &= - \int_0^t \nabla S(t-s)\cdot (b w^\delta(s))\,ds
         - \delta \int_0^t \nabla S(t-s)\cdot (\eta p^\delta(s))\,ds .
    \end{split}
\end{equation*}
Applying the $L^1$-norm, using the triangle inequality and Lemma \ref{lemma: heat semigroup divergence}, gives
\begin{equation*}
    \begin{split}
        \|w^\delta(t)\|_1
        &\leq \int_0^t \|\nabla S(t-s)\cdot (b\,w^\delta(s))\|_1 \, ds
        + \delta \int_0^t \|\nabla S(t-s)\cdot (\eta p^\delta(s))\|_1 \,ds \\
        &\leq C\int_0^t (t-s)^{-1/2}\|b\,w^\delta(s)\|_1\,ds
        + C\delta \int_0^t (t-s)^{-1/2}\|\eta p^\delta(s)\|_1\,ds \\
        & \leq C \| b \|_\infty \int_0^t (t-s)^{-1/2} \| w^\delta(s) \|_1   \,ds
        + C \| \eta \|_\infty \delta \int_0^t (t-s)^{-1/2}\|p^\delta(s)\|_1 \, ds.
    \end{split}
\end{equation*}
Since $\|p^\delta(s)\|_1\leq 1$ by Lemma \ref{lemma: contraction on L1}, up to renaming $C$ from line to line we obtain
$$
\|w^\delta(t)\|_1  \leq C\int_0^t (t-s)^{-1/2}\|w^\delta(s)\|_1 \,  ds + C\delta \int_0^t (t-s)^{-1/2} \, ds \leq C\int_0^t (t-s)^{-1/2}\|w^\delta(s)\|_1 \, ds + C\delta t^{1/2}.
$$
Set  $ v^\delta(t):=t^{-1/2}\|w^\delta(t)\|_1.$ Then the previous inequality can be rewritten as
$$
t^{1/2}v^\delta(t) \leq C\int_0^t (t-s)^{-1/2}s^{1/2}v^\delta(s) \, ds + C\delta t^{1/2}.
$$
Dividing by $t^{1/2}$, we obtain
$$
v^\delta(t)\leq Ct^{-1/2}\int_0^t (t-s)^{-1/2}s^{1/2}v^\delta(s)\,ds + C\delta.
$$
Since $t^{-1/2}s^{1/2}\leq 1$ for $s\in[0,t]$, we have
$$
v^\delta(t)\leq C\int_0^t (t-s)^{-1/2}v^\delta(s)\,ds + C\delta.
$$
By the fractional Gr\"onwall inequality in Lemma \ref{lemma: fractional gronwall lemma}, we conclude, up to renaming the constant $C$,
$$
v^\delta(t)\leq C\delta, \qquad t\in(0,T],
$$
which concludes the proof.
\end{proof}
The next proposition upgrades the previous $L^1$ control to an $L^2$ estimate by combining the smoothing effect of the heat semigroup with Lemma \ref{lemma: L1 estimate w delta}.
\begin{proposition}
\label{prop:new_prop}
Assume $b, \eta \in L^\infty (\T^d, \mathbb{R}^d).$ Then there exists $C = C( \|b\|_\infty, \| \eta \|_{\infty}, d,T)>0$ such that for any $\tau \in (0,T)$ and $\delta \in [0,1)$ it holds
$$
\| w^{\delta}(\tau) \|_2 \leq C \delta \tau^{1/2 - d/4}.
$$
\end{proposition}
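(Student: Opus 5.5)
We will argue exactly as in Lemma \ref{lemma: L1 estimate w delta}, but now take the $L^2$ norm in the mild (Duhamel) representation of $w^\delta$:
\begin{equation*}
w^\delta(\tau) = - \int_0^\tau \nabla S(\tau-s)\cdot (b\, w^\delta(s))\,ds - \delta \int_0^\tau \nabla S(\tau-s)\cdot (\eta\, p^\delta(s))\,ds ,
\end{equation*}
which is understood distributionally via Lemma \ref{lemma: heat semigroup divergence}. The plan is to bound each integrand by Lemma \ref{lemma: heat semigroup divergence}, with the Lebesgue exponents chosen separately on two time windows. On one window the known $L^1$ bound of Lemma \ref{lemma: L1 estimate w delta} and $\|p^\delta(s)\|_1\le 1$ are used. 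On the other, the $L^2$ bounds of Corollary \ref{cor: semigroup estimates for dirac delta initial condition} (namely $\|p^\delta(s)\|_2\le c s^{-d/4}$) and a self-referential $L^2$ quantity are used. The result is then closed by a Gr\"onwall/absorption argument.

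\textbf{Forcing term.} Split the forcing integral at $s=\tau/2$. For $s\in(\tau/2,\tau)$ use the $L^2\to L^2$ gradient bound $(\tau-s)^{-1/2}\|\eta\|_\infty\|p^\delta(s)\|_2\le C(\tau-s)^{-1/2}\tau^{-d/4}$. This integrates to $C\tau^{1/2-d/4}$. For $s\in(0,\tau/2)$ use the $L^1\to L^2$ bound $(\tau-s)^{-1/2-d/4}\|p^\delta(s)\|_1\le C\tau^{-1/2-d/4}$, which integrates to $C\tau^{1/2-d/4}$. So the forcing contributes $C\delta\tau^{1/2-d/4}$.

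\textbf{Drift term.} Split at $\tau/2$ again. On $(0,\tau/2)$ use the $L^1\to L^2$ bound together with Lemma \ref{lemma: L1 estimate w delta}:
\begin{equation*}
\int_0^{\tau/2}(\tau-s)^{-1/2-d/4}\,C\delta s^{1/2}\,ds\le C\delta\,\tau^{1-d/4}\le C\delta\,\tau^{1/2-d/4},
\end{equation*}
where the last step uses $\tau\le T$. On $(\tau/2,\tau)$ use the $L^2\to L^2$ bound $C\|b\|_\infty(\tau-s)^{-1/2}\|w^\delta(s)\|_2$. Set $M(t):=\sup_{0<r\le t} r^{d/4-1/2}\|w^\delta(r)\|_2$. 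It is finite a priori because $\|w^\delta(r)\|_2\le 2c\,r^{-d/4}$ by Corollary \ref{cor: semigroup estimates for dirac delta initial condition}; if $d/4-1/2<0$ one instead works with the weight $r^{d/4}$ and a truncation. The $(\tau/2,\tau)$ piece is then at most $C\tau^{1/2}\cdot\tau^{1/2-d/4}M(\tau)$.

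\textbf{Closing the estimate.} Multiplying by $\tau^{d/4-1/2}$ and taking the supremum gives $M(t)\le C\delta+Ct^{1/2}M(t)$. For $t\le t_0$ with $Ct_0^{1/2}<1/2$ this yields $M(t)\le 2C\delta$, exactly as in Proposition \ref{prop:Linfty_bound_density}. Alternatively, $v(\tau):=\tau^{d/4-1/2}\|w^\delta(\tau)\|_2$ satisfies an inequality of the form in Lemma \ref{lemma: fractional gronwall lemma} with $\alpha=1/2$, which closes directly on all of $(0,T]$.

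\textbf{Main obstacle.} The delicate point is the extension to $t\in(t_0,T)$ without losing the factor $\delta$, together with the a priori finiteness of $M$ near $0$. For the extension I would rather use the fractional Gr\"onwall route: the inequality $v(\tau)\le C\delta+C\int_{\tau/2}^\tau(\tau-s)^{-1/2}(s/\tau)^{d/4-1/2}v(s)\,ds$ has a bounded weight ratio on $[\tau/2,\tau]$, so Lemma \ref{lemma: fractional gronwall lemma} applies globally. The finiteness is guaranteed by the crude Corollary \ref{cor: semigroup estimates for dirac delta initial condition} bound, after adjusting the weight so that it is locally bounded as required in Lemma \ref{lemma: fractional gronwall lemma}.
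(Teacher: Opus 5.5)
Your proposal is correct and is essentially the paper's proof. You use the same Duhamel representation split at $\tau/2$ into four pieces, bounded exactly as you do, and then apply the fractional Gr\"onwall lemma to $h^\delta(\tau)=\tau^{d/4-1/2}\|w^\delta(\tau)\|_2$, using that the weight ratio is bounded on $[\tau/2,\tau]$.

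One correction to your finiteness remark, a point the paper passes over silently. The crude bound $\|w^\delta(r)\|_2\le 2c\,r^{-d/4}$ gives only $h^\delta(r)\lesssim r^{-1/2}$, in every dimension. Weakening the weight to meet the lemma's hypothesis would cost you the sharp rate. Instead, plug that crude bound once into the right-hand side of your inequality on $[\tau/2,\tau]$: this already shows $h^\delta$ is bounded on $(0,T]$, so Lemma~\ref{lemma: fractional gronwall lemma} applies with $\beta=0$.
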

\begin{proof}
We divide the proof into two steps. We first use the mild formulation and split the time integral at $\tau/2$; in the second step, we conclude by a fractional Gr\"onwall argument.

\textit{Step $1$: estimate of $\|w^\delta(\tau)\|_2$.}
Let $S(t)=e^{\frac{t}{2}\Delta}$ be the heat semigroup on $\T^d$. Interpreting in distributional sense the following identity thanks to Lemma \ref{lemma: heat semigroup divergence}, we have
\begin{equation*}
    \begin{split}
        w^\delta(\tau) &= -\int_0^\tau S(\tau-s) \mathrm{div}(b w^\delta(s)) \, ds -\delta \int_0^\tau S(\tau-s) \mathrm{div}(\eta p^\delta(s)) \, ds \\
        &= -\int_0^\tau \nabla S(\tau-s)\cdot (b w^\delta(s))\, ds -\delta\int_0^\tau \nabla S(\tau-s)\cdot (\eta\,p^\delta(s))  \,ds .
    \end{split}
\end{equation*}
Splitting the two integrals at $\tau/2$, we obtain
\begin{equation*}
    \begin{split}
        \|w^\delta(\tau)\|_2
        \leq I_1+I_2+I_3+I_4,
    \end{split}
\end{equation*}
where
$$
        I_1 := \int_0^{\tau/2}\|\nabla S(\tau-s)\cdot (b w^\delta(s))\|_2\,ds, \qquad I_2 := \int_{\tau/2}^{\tau}\|\nabla S(\tau-s)\cdot (b\,w^\delta(s))\|_2\,ds
        $$
and
        $$
I_3 := \delta\int_0^{\tau/2}\|\nabla S(\tau-s)\cdot (\eta p^\delta(s))\|_2  \,  ds, \qquad  I_4 := \delta\int_{\tau/2}^{\tau}\|\nabla S(\tau-s)\cdot (\eta p^\delta(s))\|_2  \, ds .
$$
We estimate separately the four terms. For $I_1$, using the $L^1\to L^2$ smoothing estimate for $\nabla S(t)$ from Lemma \ref{lemma: heat semigroup divergence}, the fact that $\tau-s\geq \tau/2$ for $s\in[0,\tau/2]$, and the bound $\|w^\delta(s)\|_1 \leq C\delta s^{1/2}$ from Lemma \ref{lemma: L1 estimate w delta},
we get
\begin{equation*}
    \begin{split}
        I_1
        &\leq C \int_0^{\tau/2} (\tau-s)^{-1/2-d/4}\|b\,w^\delta(s)\|_1\,ds \leq C\|b\|_\infty \int_0^{\tau/2} (\tau-s)^{-1/2-d/4}\|w^\delta(s)\|_1\,ds \\
        &\leq C\delta \tau^{-1/2-d/4}\int_0^{\tau/2}s^{1/2}\,ds
        \leq C\delta \tau^{1-d/4}
        \leq C\delta \tau^{1/2-d/4},
    \end{split}
\end{equation*}
where, in the last inequality, we used that $\tau\leq T$ and absorbed the additional factor into the constant. For $I_2$, using the $L^2\to L^2$ estimate for $\nabla S(t)$ in Lemma \ref{lemma: smoothing estimates heat semigroup} and H\"older inequality, we obtain
$$
I_2 \leq C\int_{\tau/2}^{\tau} (\tau-s)^{-1/2}\|b\,w^\delta(s)\|_2 \, ds \leq C\|b\|_\infty \int_{\tau/2}^{\tau} (\tau-s)^{-1/2}\|w^\delta(s)\|_2 \, ds .
$$
For $I_3$, using again the $L^1\to L^2$ smoothing estimate for $\nabla S(t)$ from Lemma \ref{lemma: heat semigroup divergence}, $\tau-s\geq \tau/2$ for $s\in[0,\tau/2]$, and the bound $\|p^\delta(s)\|_1\leq 1$ from Lemma \ref{lemma: contraction on L1}, we get
\begin{equation*}
    \begin{split}
        I_3
        &\leq C\delta \int_0^{\tau/2}(\tau-s)^{-1/2-d/4}\|\eta p^\delta(s)\|_1\,ds \leq C\delta \|\eta\|_\infty \int_0^{\tau/2}(\tau-s)^{-1/2-d/4}\|p^\delta(s)\|_1\,ds \\
        &\leq C\delta \tau^{-1/2-d/4}\int_0^{\tau/2}ds
        \leq C\delta \tau^{1/2-d/4}.
    \end{split}
\end{equation*}
For $I_4$, again by Lemma \ref{lemma: heat semigroup divergence}, and using the bound $\|p^\delta(s)\|_2 \leq C s^{-d/4}$ from Corollary \ref{cor: semigroup estimates for dirac delta initial condition}, we have
\begin{equation*}
    \begin{split}
        I_4
        &\leq C\delta \int_{\tau/2}^{\tau}(\tau-s)^{-1/2}\|\eta    p^\delta(s)  \|_2  \,  ds \leq C\delta \|\eta\|_\infty \int_{\tau/2}^{\tau}(\tau-s)^{-1/2}    \|     p^\delta(s)     \|_2  \,   ds \\
        &  \leq C \delta \int_{\tau/2}^{\tau}    (\tau-s)^{-1/2}s^{-d/4}   \,   ds \leq C  \delta \tau^{-d/4}  \int_{\tau/2}^{\tau}    (\tau-s)^{-1/2}    \,  ds \leq C\delta \tau^{1/2-d/4}.
    \end{split}
\end{equation*}
Combining the estimates for $I_1$, $I_2$, $I_3$ and $I_4$, we deduce
\begin{equation}
    \|w^\delta(\tau)\|_2
    \leq C\delta \tau^{1/2-d/4}
    + C\int_{\tau/2}^{\tau}(\tau-s)^{-1/2}\|w^\delta(s)\|_2\,ds,
    \qquad 0<\tau\leq T,
    \label{eq: aux inequality wdelta L2}
\end{equation}
for a constant $C=C(\|b\|_\infty,\|\eta\|_\infty,d,T)>0$.

\textit{Step $2$: conclusion by fractional Gr\"onwall.}
Define
\[
h^\delta(t):= t^{-1/2+d/4}\|w^\delta(t)\|_2, \qquad t\in(0,T].
\]
Then \eqref{eq: aux inequality wdelta L2} becomes
\begin{equation*}
    \begin{split}
        h^\delta(\tau)
        &\leq C\delta
        + C\tau^{-1/2+d/4}\int_{\tau/2}^{\tau}(\tau-s)^{-1/2}s^{1/2-d/4}h^\delta(s)\,ds .
    \end{split}
\end{equation*}
Since for $s\in[\tau/2,\tau]$ one has $
\left(\frac{s}{\tau}\right)^{1/2-d/4}\leq c(d),$ we have that
$$
        h^\delta(\tau)
        \leq C\delta + c(d)C\int_{\tau/2}^{\tau}(\tau-s)^{-1/2}h^\delta(s)\,ds \leq C\delta + c(d)C\int_0^{\tau}(\tau-s)^{-1/2}h^\delta(s)\,ds .
$$
Thus, up to renaming the constant $C$, the fractional Gr\"onwall inequality in Lemma \ref{lemma: fractional gronwall lemma} gives
$$
h^\delta(\tau)\leq C\delta, \qquad 0<\tau\leq T.
$$
This is the thesis by recalling the definition of $h^\delta$.
\end{proof}

\subsection{$L^2$ initial data for the Fokker-Planck equation}

\subsubsection{Proof of Proposition \ref{prop: convergence density pdelta regular p0} }\label{proof: main result convergence p delta regular ic}
    Fix $p_0 \in L^2(\T^d)$. For each $\delta \in [0,1)$, let $p^{\delta,p_0}$ be the solution of the FPE , i.e.
    \begin{equation}
        \partial_t p^{\delta,p_0} = \frac{1}{2}\Delta p^{\delta, p_0} - \mathrm{div}(p^{\delta,p_0} (b + \delta \eta)), \qquad p^{\delta,p_0}|_{ t = 0} = p_0.
        \label{eq: FPE for p delta p0 appendix}
    \end{equation}
    Since $b, \eta \in L^{\infty}(\T^d; \mathbb{R}^d)$, the theory of linear parabolic equations guarantees the existence of a unique variational solution
    $$
    p^{\delta, p_0} \in   L^2(0,T; H^{1}(\T^d)) \cap H^1(0,T; H^{-1}(\T^d)) \cap  C([0,T];L^2(\T^d)). 
    $$
    Indeed, the bilinear form
$$
a^{\delta}: H^1(\T^d) \times H^1 (\T^d) \to \mathbb{R},\qquad a^\delta(u,v):=\frac12\int_{\T^d}\nabla u\cdot\nabla v\,dx
-\int_{\T^d} g^\delta u\cdot\nabla v\,dx
$$
is bounded on $H^1(\T^d)\times H^1(\T^d)$ and satisfies, thanks to H\"older and Young's inequality
$$
a^{\delta}(u,u) = \frac{1}{2} \| \nabla u \|_2^2 - \int_{\T^d} g^{\delta} u \cdot \nabla u \, dx \geq \lambda_1 \| u \|_{H^1(\T^d)}^2 - \lambda_2 \| u \|_2^2,
$$
with $\lambda_i >0$. Hence, by the classical theory of Lions-Magens (see \cite[Chapter 4]{Lions_MagensI}), for any $p_0 \in L^2(\T^d)$ there exists a unique solution
$$
p^{\delta,p_0} \in L^2(0,T; H^1(\T^d))\cap H^1(0,T; H^{-1}(\T^d)) \cap C([0,T]; L^2(\T^d)).
$$
such that $p^{\delta,p_0}$ solves the FPE \eqref{eq: FPE for p delta p0 appendix} in variational sense. By the classical identification between variational and distributional formulations, this solution is also the unique weak/distributional solution, and admits the mild representation recalled in \eqref{eq:mild-representation-appendix}.

    The difference quotient 
    $$
    r^{\delta,p_0} (t):= \frac{p^{\delta,p_0} (t) - p^{0, p_0} (t)}{\delta }, \qquad \delta \in (0,1),
    $$
    satisfies
    \begin{equation}
    \partial_t r^{\delta,p_0} = \frac{1}{2}\Delta r^{\delta,p_0} - \mathrm{div}( b r^{\delta,p_0}) - \mathrm{div}(\eta  p^{\delta ,p_0}), \qquad r^{\delta,p_0}|_{t = 0 } = 0.
        \label{eq:FPE_r_delta_app}
    \end{equation}
    We define the candidate derivative $\dot r ^{p_0}$ as the unique solution of
    \begin{equation}
    \partial_t \dot r ^{p_0} = \frac{1}{2}\Delta \dot r ^{p_0} - \mathrm{div}(b \dot r ^{p_0} ) - \mathrm{div}( \eta  p ^{0,p_0}), \qquad \dot r^{p_0}|_{ t = 0} = 0.
        \label{eq:FPE_der_app}
    \end{equation}
    Lastly, the error $z^{\delta, p_0}:= r^{\delta, p_0} - \dot r ^{p_0} $ satisfies 
    \begin{equation}
    \partial_t z^{\delta, p_0} = \frac{1}{2}\Delta z^{\delta, p_0} - \mathrm{div}( b  z^{\delta, p_0}) - \delta \mathrm{div} (\eta r^{\delta, p_0}), \qquad z^{\delta, p_0} |_{ t = 0} = 0.
        \label{eq:FPE_z_delta_app}
    \end{equation}
    We now proceed by performing $L^2$ bounds for $p^{\delta,p_0} $, $r^{\delta,p_0}$ and $z^{\delta, p_0}$.

    \textit{Step $1$: $L^2$ bound for $p^\delta$.} 
    Let $\tau \in [0,T]$. Denote by $S(t) = e^{\frac{t}{2}\Delta}$ the heat semigroup. The solution of \eqref{eq: FPE for p delta p0 appendix} is
    \begin{equation*}
        \begin{split}
             p^{\delta,p_0} (\tau ) &= S(\tau) p_0 - \int_0^\tau  S(\tau -s) \mathrm{div} (b p^{\delta, p_0} (s)) \, ds - \delta\int_0^\tau  S(\tau-s) \mathrm{div}(\eta p^{\delta,p_0} (s)) \, ds \\
             & = S(\tau ) p_0 - \int_0^\tau \nabla S(\tau -s)\cdot (b p^{\delta, p_0} (s)) \, ds -\delta  \int_0^\tau \nabla S(\tau -s) \cdot (\eta p^{\delta,p_0} (s)) \, ds,
        \end{split}
    \end{equation*}
    where in the second equality we have used the commuting property of the divergence with the heat semigroup. Applying the $L^2$-norm to the previous identity, using that $\|\nabla S(t) g\|_2 \lesssim t^{-1/2} \| g \|_2 $ thanks to Lemma \ref{lemma: heat semigroup divergence}, $\| S(\tau) p_0 \|_2 \leq \| p_0\|_2$ thanks to Lemma \ref{lemma: smoothing estimates heat semigroup}(i) and applying H\"older inequality, we get
    \begin{equation*}
        \begin{split}
            \| p^{\delta,p_0} (\tau ) \|_2 &\leq  \| S(\tau ) p_0\|_2  +\int_0^\tau \| \nabla S(\tau -s)\cdot (b p^{\delta, p_0} (s))\|_2 \, ds + \int_0^\tau \| \nabla S(\tau -s) \cdot (\eta p^{\delta,p_0} (s))\|_2 \, ds \\
            &\leq \| p_0 \|_2 + c \int_0^\tau (\tau -s)^{-1/2 } \| b p^{\delta, p_0} (s) \|_2 \, ds + c \int_0^{\tau} (\tau -s)^{-1/2} \| \eta p^{\delta,p_0} (s) \|_2 \, ds \\
            & \leq \| p_0 \|_2 + c \| b \|_\infty \int_0^{\tau } (\tau -s)^{-1/2} \| p^{\delta,p_0}(s) \|_2 \, ds + c \| \eta \|_\infty \int_0^{\tau} (\tau -s)^{-1/2} \| p^{\delta, p_0} (s) \|_2 \, ds\\
            & \leq \| p_0 \|_2 + c (\| b\|_\infty + \| \eta \|_\infty )\int_0^\tau (\tau -s)^{-1/2} \| p^{\delta,p_0} (s) \|_2\,ds, \qquad 0 \leq \tau \leq T,\; \delta \in [0,1),
        \end{split}
    \end{equation*}
    for a constant $c = c(T, d)>0$. Applying the fractional Gr\"onwall inequality from Lemma \ref{lemma: fractional gronwall lemma}, we get that
    \begin{equation}
         \| p^{\delta, p_0} (\tau ) \|_2 \leq C \| p_0 \|_2, \qquad 0 \leq \tau \leq T,\; 0 \leq \delta < 1, 
        \label{eq:L2_unif_bound_p_delta_app}
    \end{equation}
    for a constant $C = C(\| b \|_\infty, \| \eta \|_\infty, T, d)>0$.

    \textit{Step $2$: $L^2$ bound for $r^\delta$.} 
    Let $\tau \in [0,T]$. Using the variation-of-constants formula in the sense of distributions, together with Lemma \ref{lemma: heat semigroup divergence}, \eqref{eq:FPE_r_delta_app} takes the form
    $$
      r^{\delta,p_0} (\tau) = -\int_0 ^\tau \nabla S (\tau -s) \cdot (b r^{\delta, p_0} (s) ) \, ds- \int_0^{\tau } \nabla S(\tau -s) \cdot (\eta p^{\delta,p_0} (s)) \, ds.
    $$
    Reasoning as before, hence applying the $L^2$-norm to the previous identity, using that $\|\nabla S(t) g\|_2 \lesssim t^{-1/2} \| g \|_2 $ thanks to Lemma \ref{lemma: heat semigroup divergence}, and applying H\"older inequality, we get
    \begin{equation*}
        \begin{split}
            \|  r^{\delta,p_0} (\tau) \|_2 &\leq  \int_0 ^\tau \| \nabla S (\tau -s) \cdot (b r^{\delta, p_0} (s) )\|_2 \, ds +\int_0^{\tau } \| \nabla S(\tau -s) \cdot (\eta p^{\delta ,p_0} (s))\|_2 \, ds \\
            & \leq c \| b \|_\infty \int_0^{\tau} (\tau -s)^{-1/2} \| r^{\delta,p_0} (s) \|_2 \, ds + c \| \eta \|_\infty \int_0^\tau (\tau -s)^{-1/2} \|p^{\delta , p_0} (s) \|_2 \, ds,
        \end{split}
    \end{equation*}
    for a constant $c =c(T,d)>0.$ Using the bound in \eqref{eq:L2_unif_bound_p_delta_app} and $\int_0^{\tau} (\tau -s)^{-1/2}\, ds =2 \tau ^{1/2}$, we obtain
    $$
    \| r^{\delta,p_0} (\tau) \|_2 \leq C \int_0^\tau (\tau -s)^{-1/2} \| r^{\delta, p_0} (s) \|_2 \, ds +C \| p_0 \|_2 \tau^{1/2},
    $$
    for a constant $C = C(\| b \|_\infty, \| \eta \|_\infty , T, d)>0$. Then $v^\delta (t):= t^{-1/2}\| r^{\delta,p_0} (t) \|_2$ satisfies
    $$
    v^\delta (\tau) \leq C \tau^{-1/2}  \int_0^\tau (\tau -s)^{-1/2} s^{1/2}v^{\delta}(s) \, ds +C \| p_0 \|_2  \leq C \int_0^{\tau }(\tau -s)^{-1/2} v^\delta (s) + C \| p_0 \|_2,
    $$
    where in the second inequality we have used $\tau ^{-1/2} s^{1/2} \leq 1$ for $s \in [0, \tau]$. Applying the fractional Gr\"onwall inequality from Lemma \ref{lemma: fractional gronwall lemma}, we obtain
    $$
    v^\delta (\tau ) \leq C\| p_0 \|_2, \qquad 0 \leq \tau \leq T, \; \delta  \in [0,1),
    $$
    for a (not renamed) new constant $C = C(\| b \|_\infty, \| \eta \|_\infty , T, d)>0$. Hence,
    \begin{equation}
    \|r^{\delta, p_0}(\tau) \|_2 \leq C \tau^{1/2}\| p_0 \|_2 \qquad 0 \leq \tau \leq T, \; \delta  \in [0,1),
        \label{eq:L2_bound_r_delta_app}
    \end{equation}
    for $C = C(\| b \|_\infty, \| \eta \|_\infty , T, d)>0$.

\textit{Step $3$: $L^2$ bound for $z^{\delta, p_0}$.}  For any $\tau \in [0,T]$, the solution of \eqref{eq:FPE_z_delta_app} is
$$
z^{\delta, p_0} (\tau) = - \int_0^{\tau } \nabla S(\tau -s) \cdot ( b z^{\delta,p_0} (s)) \, ds  -\delta \int_0^\tau  \nabla S(\tau -s)  \cdot ( \eta r^{\delta, p_0} (s) ) \, ds
$$
Applying the $L^2$-norm, using the triangle inequality, $\| \nabla S(t) g\|_2 \lesssim t^{-1/2} \| g \|_2$ thanks to Lemma \ref{lemma: heat semigroup divergence}, and  H\"older inequality, we obtain
\begin{equation*}
    \begin{split}
        \|z^{\delta, p_0} (\tau)\|_2  &\leq   \int_0^{\tau } \| \nabla S(\tau -s) \cdot ( b z^{\delta,p_0} (s))\|_2 \, ds + \delta \int_0^\tau  \| \nabla S(\tau -s)  \cdot ( \eta r^{\delta, p_0} (s) )\|_2 \, ds\\
        & \leq c \int_0^\tau (\tau -s)^{-1/2} \| b z^{\delta, p_0} (s) \|_2 \, ds+ c \delta \int_0^{\tau } (\tau -s)^{-1/2} \|\eta r^{\delta, p_0} (s) \|_2 \, ds \\
         & \leq c \| b \|_\infty \int_0^\tau  (\tau -s)^{-1/2} \| z^{\delta, p_0} (s) \|_2 \, ds + c \delta \| \eta \|_\infty \int_0^\tau (\tau -s)^{-1/2} \| r^{\delta, p_0} (s) \|_2 \, ds,
    \end{split}
\end{equation*}
    for a constant $c = c(T,d)>0$. Using \eqref{eq:L2_bound_r_delta_app} on the last term on the right-hand side of the previous expression, we get
\begin{equation*}
    \begin{split}
        \| z^{\delta, p_0} (\tau ) \|_2 
        &\leq C \int_0^\tau (\tau -s)^{-1/2} \| z^{\delta, p_0} (s) \|_2 \, ds 
        + C \delta \int_0^\tau (\tau -s)^{-1/2} \| r^{\delta, p_0} (s) \|_2 \, ds \\
        &\leq C \int_0^\tau (\tau -s)^{-1/2} \| z^{\delta, p_0} (s) \|_2 \, ds 
        + C \delta \| p_0 \|_2 \int_0^\tau (\tau -s)^{-1/2} s^{1/2} \, ds \\
        &\leq C \int_0^\tau (\tau -s)^{-1/2} \| z^{\delta, p_0} (s) \|_2 \, ds 
        + C \delta \tau \| p_0 \|_2,
    \end{split}
\end{equation*}
where we used that $\int_0^\tau (\tau -s)^{-1/2} s^{1/2}\, ds = \frac{\pi}{2}\tau$ and $C = C( \|  b\|_\infty , \| \eta \|_{\infty }, T,d)>0$ is a constant absorbing the other constants. For $\tau \in (0,T]$, define
$$
w^\delta(\tau) := \tau^{-1}\| z^{\delta, p_0}(\tau)\|_2 .
$$
Dividing the previous inequality by $\tau$ and using that $s/\tau \leq 1$ for $s \in [0,\tau]$, we obtain
$$
w^\delta(\tau) \leq C \int_0^\tau (\tau -s)^{-1/2} w^\delta(s)\, ds + C \delta \| p_0 \|_2 .
$$
 Lastly, the fractional Gr\"onwall inequality from Lemma \ref{lemma: fractional gronwall lemma} leads to
    $$
    \| z^{\delta,p_0} (t) \|_2 \leq C \delta t \| p_0 \|_2, \qquad t \in [0,T], \; \delta \in (0,1),
    $$
    with $C = C( \|  b\|_\infty , \| \eta \|_{\infty }, T,d)>0$. \qed

Fix $\tau \in (0,T)$. Recall that in Section \ref{sec: SDEs}, given an initial condition $p_0 \in L^2(\T^d)$ for the FPE \eqref{eq: FPE with regular initial condition}, we have introduced the map
$$
    {D}_{p_0,\tau }^{L^\infty } :L^{\infty}(\mathbb{T}^d; \mathbb{R}^d)\to L^2(\mathbb{T}^d), \qquad \eta \mapsto \dot{r}^{p_0,\eta} (\tau):= \dot r^{p_0},
$$
where $\dot r^{p_0}$ denotes the solution of \eqref{eq: eq for dot r regular initial condition}, i.e.
$$
\partial_t \dot { {r}}^{p_0}  = \frac{1}{2}\Delta \dot{ r }^{p_0}  - \mathrm{div} (b \dot{ r}^{p_0} ) - \mathrm{div} ( \eta  p ^{0,p_0} ), \qquad \dot r^{p_0} |_{t = 0} = 0.
$$
The map $D_{p_0,\tau }^{L^\infty }$ represents the derivative with respect to $\delta $ in direction $\eta$ of the solution $p^{\delta, p_0}$ of the FPE \eqref{eq: FPE with regular initial condition} that has $p^0$ as initial condition. We are now ready to prove Proposition \ref{cont}, that asserts the linearity and continuity of $D_{p_0,\tau }^{L^\infty }$.

\subsubsection{Proof of Proposition \ref{cont}}
\label{proof: derivative linear and bounded}
    We denote for simplicity by $\dot r = \dot r^{p_0}$ the solution of \eqref{eq: eq for dot r regular initial condition}, and by $p^\delta = p^{\delta, p_0}$ the solution of \eqref{eq: FPE with regular initial condition}. Let $S(t) = e^{\frac{t}{2} \Delta}$ be the heat semigroup on $L^2 (\mathbb{T}^d).$ The solution of \eqref{eq: eq for dot r regular initial condition} is given by
    \begin{equation}
        \begin{split}
            \dot r (\tau) &= -\int_0^{\tau } S(\tau -s) \mathrm{div}(b \dot r (s)) \, ds - \int_0^\tau S (\tau -s) \mathrm{div}(\eta p^0 (s)) \, ds \\
            &
             = -\int_0^{\tau } \nabla S(\tau -s) \cdot (b \dot r (s)) \, ds - \int_0^\tau \nabla S (\tau -s) \cdot (\eta p^0 (s)) \, ds,
        \end{split}
        \label{eq: mild formulation dot r}
    \end{equation}
    where in the previous equality we have used Lemma \ref{lemma: heat semigroup divergence} to interpret it in distributional sense. The mild formulation \eqref{eq: mild formulation dot r} shows that $D_{p_0,\tau }^{L^\infty} $ is linear.

    To prove that $D_{p_0,\tau }^{L^\infty }$ is continuous, we show that there exists $K = K (\| b \|_{{\infty}},T,d)>0$ such that for any $\eta \in L^\infty (\mathbb{T}^d; \mathbb{R}^d)$, it holds
    \begin{equation}
        \| \dot r (\tau) \|_2 \leq K \tau ^{1/2} \| p_0 \|_2 \| \eta \|_\infty , \qquad 0 <  \tau \leq  T.
        \label{eq: bound dot r}
    \end{equation}
    Considering the $L^2$-norm in \eqref{eq: mild formulation dot r}, applying the triangle inequality and using that $\| \nabla S(t) g \|_2 \leq Ct^{-1/2}\| g \|_2$ thanks to Lemma \ref{lemma: heat semigroup divergence}, we get
    \begin{equation}
        \begin{split}
            \| \dot r (\tau) \|_2 & \leq C\int_0^\tau  \| \nabla  S(\tau -s) \cdot (b \dot r (s)) \|_2\, ds + \int_0^\tau \| S(\tau -s ) \mathrm{div}( \eta p^0(s))\|_2 \, ds \\
            &\leq C\int_0^\tau (\tau-s)^{-1/2} \| b \dot r (s) \|_2 \, ds + \int_0^\tau  (\tau -s)^{-1/2} \| \eta p^0 (s) \|_2\, ds  \\
            & \leq  C \| b \|_\infty \int_0^\tau (\tau -s)^{-1/2} \| \dot r (s) \|_2 \, ds + C \| \eta \|_\infty \int_0^\tau (\tau -s)^{-1/2} \| p^0 (s) \|_2 \, ds,
        \end{split}
        \label{eq: L2 estimate dot r aux}
    \end{equation}
    where in the last inequality we have used H\"older inequality. Using that $\sup_{0 \leq t \leq T}\| p^\delta (t) \|_2 \leq c \| p_0 \|_2 $ for a constant $c = c(\|b \|_{\infty}, \| \eta \|_{\infty}, d,T)>0$ thanks to \eqref{eq:L2_unif_bound_p_delta_app}, we deduce 
    $$
    \| \dot r (\tau) \|_2 \leq C \int_0^\tau (\tau -s )^{-1/2} \| \dot r (s) \|_2 \, ds + C \| p_0 \| \int_0^\tau (\tau -s)^{-1/2} \, ds \leq C \int_0^\tau (\tau -s )^{-1/2} \| \dot r (s) \|_2 \, ds + \tau^{1/2} C \| p_0\| \| \eta \|_\infty ,
    $$
    for a new constant $ C = C( \| b \|_{\infty},d, T)>0$. Then, the bound \eqref{eq: bound dot r} follows by applying the fractional Gr\"onwall inequality from Lemma \ref{lemma: fractional gronwall lemma} to $v(\tau):= \tau^{-1/2} \| \dot r(\tau) \|_2$, as at the end of the proof of Proposition \ref{prop:new_prop}.

    We conclude the proof by showing that $D_{p_0, \tau}^{L^\infty }(\eta)$ has zero average. Since $\dot r=\dot r^{p_0,\eta}$ solves
    $$
    \partial_t \dot r
    = \frac12 \Delta \dot r - \mathrm{div}(b\dot r)-\mathrm{div}(\eta p^0),
    \qquad
    \dot r|_{t=0}=0,
    $$
    integrating over $\T^d$ and using periodicity gives
    $$
    \frac{d}{dt}\int_{\T^d}\dot r(t,x)\,dx
    =
    \frac12\int_{\T^d}\Delta \dot r(t,x)\,dx
    -
    \int_{\T^d}\mathrm{div}(b\dot r)(t,x)\,dx
    -
    \int_{\T^d}\mathrm{div}(\eta p^0)(t,x)\,dx
    =0.
    $$
    Since $\dot r(0,\cdot)=0$, we obtain
    $$
    \int_{\T^d}\dot r(t,x)\,dx=0
    \qquad\text{for all }t\in[0,T].
    $$
    In particular,
    $$
    \int_{\T^d} D_{p_0,\tau}^{L^\infty }(\eta)\,dx =
    \int_{\T^d}\dot r(\tau,x)\,dx
    =0,
    $$
    hence the thesis.
    
\qed

{\bf Acknowledgements. }{\small Gianmarco Del Sarto acknowledges the support from the DFG project FOR~5528.  The research of Franco Flandoli is funded by the PRIN-PNRR project “Some mathematical
approaches to climate change and its impacts” n. P20225SP98 and the European Union (ERC, NoisyFluid, No. 101053472). Views and opinions expressed are however those of the authors only and do not necessarily reflect those of the European Union or the European Research Council. Neither the European Union nor the granting authority can be held responsible for them. 

Stefano Galatolo acknowledges the MIUR Excellence Department Project awarded to the Department of Mathematics, University of Pisa, CUP I57G22000700001.

Part of this project was conducted while Sakshi Jain was affiliated with Monash University whose support is gratefully acknowledged.

}

\bibliographystyle{plain}
\bibliography{biblio}

\end{document}